\documentclass[a4paper,10pt]{amsart}

\usepackage[USenglish]{babel}

\usepackage{amsmath}
\usepackage{amssymb}
\usepackage{amsthm}
\usepackage{verbatim}
\usepackage{stackrel}

\usepackage[shortlabels]{enumitem}
\usepackage{comment,cite}	
\usepackage{hyperref}
\hypersetup{
    colorlinks=true,
    linkcolor=blue,
    citecolor=red,
    filecolor=magenta,      
    urlcolor=cyan,
    pdftitle={An operator theoretic approach to uniform (anti-)maximum principles},
    bookmarks=true,
    linktocpage=true,
}

\usepackage[utf8]{inputenc}

\usepackage{amssymb}
\usepackage{amsmath}
\usepackage{amsthm}
\usepackage{bbm}
\usepackage{verbatim, stackrel}

\usepackage[shortlabels]{enumitem}
\usepackage{marginnote}
\usepackage{color}

\usepackage{stmaryrd}
\usepackage{tikz}
\usepackage{tikz-cd}


\newcommand{\bbC}{\mathbb{C}}

\newcommand{\bbN}{\mathbb{N}}

\newcommand{\bbR}{\mathbb{R}}

\newcommand{\bbZ}{\mathbb{Z}}

\newcommand{\calL}{\mathcal{L}}

\DeclareMathOperator{\id}{id} 
\DeclareMathOperator{\one}{{\mathbbm{1}}} 
\DeclareMathOperator{\re}{Re} 
\DeclareMathOperator{\im}{Im} 
\DeclareMathOperator{\dist}{dist} 
\newcommand{\argument}{\mathord{\,\cdot\,}} 
\newcommand{\dx}{\;\mathrm{d}} 
\DeclareMathOperator{\linSpan}{span} 
\newcommand{\norm}[1]{\left\lVert #1 \right\rVert} 
\newcommand{\modulus}[1]{\left\lvert #1 \right\rvert} 
\newcommand{\duality}[2]{\left\langle#1\, ,\, #2\right\rangle} 
\newcommand{\dom}[1]{\operatorname{dom}\left(#1\right)} 
\DeclareMathOperator{\Ima}{Rg} 
\newcommand{\BoundCond}[1]{\operatorname{BC}(#1)}

\newcommand{\spec}{\sigma} 
\newcommand{\resSet}{\rho}
\newcommand{\Res}{\mathcal{R}} 
\newcommand{\pntSpec}{\spec_{\operatorname{pnt}}} 
\newcommand{\spb}{s} 

\newcommand{\Implies}[2]{``\ref{#1} $\Rightarrow$ \ref{#2}'':}



\theoremstyle{definition}
\newtheorem{definition}{Definition}[section]
\newtheorem{remark}[definition]{Remark}

\newtheorem*{remark*}{Remark}
\newtheorem*{remarks*}{Remarks}

\newtheorem{examples}[definition]{Examples}

\theoremstyle{plain}
\newtheorem{proposition}[definition]{Proposition}
\newtheorem{lemma}[definition]{Lemma}
\newtheorem{theorem}[definition]{Theorem}
\newtheorem{corollary}[definition]{Corollary}

\newtheorem{setting}[definition]{Setting}

\numberwithin{equation}{section} 

\begin{document}

\title[Uniform anti-maximum principles]{An operator theoretic approach to uniform (anti-)maximum principles}
\author{Sahiba Arora}
\address{Sahiba Arora, Technische Universität Dresden, Institut für Analysis, Fakultät für Mathematik , 01062 Dresden, Germany}
\email{sahiba.arora@mailbox.tu-dresden.de}
\author{Jochen Gl\"uck}
\address{Jochen Gl\"uck, Universität Passau, Fakultät für Informatik und Mathematik, 94032 Passau, Germany}
\email{jochen.glueck@uni-passau.de}
\subjclass[2010]{35B09; 47B65; 46B42}
\keywords{Maximum principle; uniform anti-maximum principle; eventual positivity; eventually positive resolvents}
\date{\today}
\begin{abstract}
	Maximum principles and uniform anti-maximum principles are a ubiquitous topic in PDE theory 
	that is closely tied to the Krein--Rutman theorem and kernel estimates for resolvents.	
	
	We take up a classical idea of Takáč 
	-- to prove (anti-)maximum principles in an abstract operator theoretic framework --
	and combine it with recent ideas from the theory of eventually positive operator semigroups.
	This enables us to derive necessary and sufficient conditions for (anti-)maximum principles 
	in a very general setting.
	Consequently, we are able to either prove or disprove (anti-)maximum principles 
	for a large variety of concrete differential operators.
	As a bonus, for several operators that are already known
	to satisfy or to not satisfy anti-maximum principles,
	our theory gives a very clear and concise explanation of this behaviour.
\end{abstract}

\maketitle

\section{Introduction}
\label{section:introduction}

\subsection*{Maximum and anti-maximum principles}

Maximum principles belong to the most classical tools in the analysis of PDEs. 
If, for instance, $\Omega$ is an open subset of $\bbR^d$ and 
$A: E \supseteq \dom{A} \to E$ is a differential operator 
on a suitable function space $E$ over $\Omega$, 
it is common to say that $A$ satisfies the maximum principle if, for each $f \ge 0$,
a unique solution $u \in \dom{A}$ to the equation
\begin{align*}
	-Au = f
\end{align*}
exists, which in addition, satisfies $u \ge 0$.
For example, if $A$ is the Laplace operator with Dirichlet boundary conditions,
then $A$ satisfies the maximum principle.
If however, $A$ is a second-order operator with some lower order terms,
conditions on the latter are necessary to ensure that the maximum principle holds.
Hence, it actually makes sense to consider the more general equation
\begin{align}
	\label{eq:equation-with-lambda}
	(\lambda - A)u = f
\end{align}
for real numbers $\lambda$ that are located in the resolvent set $\resSet(A)$ of $A$, 
and to ask again whether $f \ge 0$ implies $u \ge 0$.
For second-order elliptic operators $A$ (with local boundary conditions),
this more general version of the maximum principle is now,
as a rule, true whenever $\lambda$ is larger than the so-called \emph{spectral bound}
\begin{align*}
	\spb(A) 
	:=
	\sup 
	\{
		\re \mu: \,
		\mu \in \spec(A)
	\};
\end{align*}
here, $\spec(A)$ denotes the spectrum of $A$.
On the other hand, for elliptic operators of order strictly higher than $2$,
the maximum principle is not satisfied for all $\lambda > \spb(A)$ in general
(this is, for instance, a consequence of the very general results
in \cite[Theorem~3.5]{MiyajimaOkazawa1986} 
and \cite[Proposition~2.2]{ArendtBattyRobinson1990}).
On the other hand, some operators still satisfy the maximum principle 
for $\lambda$ in a right neighbourhood of $\spb(A)$.

In many cases, one will even expect a stronger version of the maximum principle to hold:
by this we mean that, if $f \ge 0$ is non-zero in the equation~\eqref{eq:equation-with-lambda},
then $u$ will, for instance, be strictly positive in $\Omega$, or -- more specifically --
dominate a positive multiple of the leading eigenfunction of $A$.
One (though not the major) consequence of our main result,
Theorem~\ref{thm:main}, is that -- under appropriate assumptions --
the maximum principle already implies its stronger version.

In the setting described above, we can also consider the equation~\eqref{eq:equation-with-lambda}
for $\lambda$ in a left neighboorhood of the spectral bound $\spb(A)$
(or of a more general eigenvalue $\lambda_0$ of $A$).
For such $\lambda$, certain elliptic differential operators $A$ satisfy 
the implication: $f \ge 0$ implies $u \le 0$.
This is typically referred to as an \emph{anti-maximum principle}.
There exists extensive literature about the question
for which operators such an anti-maximum principle holds,
and we refrain from the attempt to give a comprehensive list of references here;
see for instance 
\cite{ClementPeletier1979, Takac1996, ClementSweers2000, ClementSweers2001}
for just a few papers on this.

\subsection*{Uniform vs.\ individual behaviour}

It is crucial to distinguish between the following two versions of the maximum principle
when considering the equation~\eqref{eq:equation-with-lambda}:

Does $f \ge 0$ imply $u \ge 0$ for all $\lambda$ in a right neighbourhood of $\spb(A)$
which might depend on $f$, 
or in a right neighbourhood that can be chosen to be independent of $f$?
In the former case, we speak of an \emph{individual} maximum principle,
while in the latter case, we speak of a \emph{uniform} maximum principle.
The analogous distinction must also be made for the anti-maximum principle.

Individual maximum and anti-maximum principles are, in general, easier to prove
than their uniform versions; 
detailed references on what is known about them in an abstract setting
are given in the subsection on eventually positive resolvents below.
In this paper, we focus on \emph{uniform} (anti-)maximum principles throughout.
Our main result is Theorem~\ref{thm:main} which gives sufficient criteria 
for these principles to hold.
Necessary criteria, on the other hand, can be found 
in Theorem~\ref{thm:domination-interval-one-sided}.

\subsection*{Main notation}

To keep our results at a general and flexible level, 
we will work in the setting of Banach lattices throughout.
This also has the advantage that many theoretical constructions that occur in our arguments
-- in particular, \emph{principal ideals} and norms induced by strictly positive functional,
are most naturally formulated within the theory of Banach lattices.
In the special case of $L^p$-spaces, those constructions are typically framed
as so-called \emph{change of density}-arguments.
Standard references about Banach lattices include the monographs
\cite{Schaefer1974}, \cite{Zaanen1983}, and \cite{Meyer-Nieberg1991}.

Let us briefly recall a bit of notation and terminology that is needed to properly state our setting and our main result.
Fix a complex Banach lattice $E$ with real part $E_\bbR$. 
A linear operator $A: E \supseteq \dom{A} \to E$ is called \emph{real} if its domain satisfies $\dom{A} = \dom{A} \cap E_\bbR + i \dom{A} \cap E_\bbR$ and 
if $A$ maps $\dom{A} \cap E_\bbR$ into $E_\bbR$.
Consequently, a linear operator $T: E \to E$ is real if and only if $T$ maps $E_\bbR$ into $E_\bbR$.

For two vectors $u,v$ in the real part of $E$, we use the notation $v\succeq u$ (or $u\preceq v$) 
if there exists a real number $c \in (0,\infty)$ such that $v \ge c u$; 
equivalently, there exists a real number $\tilde c \in (0,\infty)$ such that $\tilde cv \ge u$. 
Similarly, for two bounded and real linear operators $T,S$ on $E$ 
we use the notation $T \succeq S$ (or $S \preceq T$) 
if there exists a real number $c \in (0,\infty)$ such that $T \ge c S$; 
equivalently, there exists a real number $\tilde c \in (0,\infty)$ such that $\tilde c \, T \ge S$.

For a non-zero vector $u$ in $E$ and a non-zero functional $\varphi \in E'$, we use the notation $u\otimes \varphi$ to denote the rank-$1$ operator on $E$ given by 
\begin{align*}
	(u\otimes \varphi)f = \duality{\varphi}{f} u
\end{align*}
for each $f \in E$.
We say $\varphi$ is strictly positive if $\duality{\varphi}{f}>0$ for all $f\in E_+\setminus \{0\}$.  Moreover, if $u$ is positive, then we use the notation 
\begin{align*}
	E_u := \{x \in E: \; \modulus{x} \preceq u\}
\end{align*}
to denote the so-called \emph{principal ideal} generated by $u$.
It is itself a complex Banach lattice when endowed with the so-called \emph{gauge norm} $\norm{\argument}_u$
given by
\begin{align*}
	\norm{x}_u 
	:=
	\inf 
	\{
		c > 0: \
		\modulus{x} \le c u
	\}
\end{align*}
for each $x \in E_u$.
The vector $u$ is called a \emph{quasi-interior point of $E_+$}
-- or briefly, just a \emph{quasi-interior point} --
if the principal ideal $E_u$ is dense in $E$.
For illustrations of these notions in concrete functions spaces, 
we refer to Examples~\ref{ex:principal-ideal-and-norm-induced-by-functional} below.
A bit of further notation and terminology about Banach lattices can also be found
at the beginning of Section~\ref{section:bounds-of-operators}.

\subsection*{Major assumptions}

Throughout the article, we will often consider the following setting, where we assume a domination or a spectral assumption, or both:

\begin{setting}
	\label{sett:general}
	Let $A: E \supseteq \dom{A} \to E$ be a densely defined, closed, and real linear operator on a complex Banach lattice $E$. 
	Let $u \in E_+$ and let $\varphi \in E'_+$ be strictly positive.
	
	For the vectors $u$ and $\varphi$ we consider the following two assumptions (and we will specify in each result which of the assumptions is used): 
	\begin{enumerate}[\upshape (a)]
		\item 
		\emph{Domination assumption:}
		We assume that $m_1, m_2 \ge 0$ are integers such that the inclusions
		\begin{align*}
			\dom{A^{m_1}} \subseteq E_u \qquad \text{and} \qquad \dom{(A')^{m_2}} \subseteq (E')_{\varphi}
		\end{align*}
		hold; we use the notation $m := m_1 + m_2 \ge 0$.
		
		\item 
		\emph{Spectral assumption:}
		We assume that $\lambda_0 \in \bbR$ is a geometrically simple eigenvalue of $A$ and an eigenvalue of $A'$, with the following additional properties:
		The eigenspace $\ker(\lambda_0-A)$ is spanned by a vector $v$ that satisfies $v \succeq u$, and the dual eigenspace $\ker(\lambda_0-A')$ has an element $\psi$ that satisfies $\psi \succeq \varphi$.
	\end{enumerate}
\end{setting}

Here, as usual, we say an eigenvalue $\lambda_0$ is \emph{geometrically simple} if the dimension of the eigenspace $\ker(\lambda_0-A)$ is equal to one.

It is worthwhile to note that, under the spectral assumption in Setting~\ref{sett:general}, the vector $u$ is automatically a quasi-interior point of $E_+$, since $\dom{A^{m_1}}$ is dense in $E$.
Moreover, we point out that both assumptions in Setting~\ref{sett:general} are invariant 
under simultaneously replacing $A$ with $-A$ and $\lambda_0$ with $-\lambda_0$.

In concrete situations, where $A$ will typically be a differential operator, 
the domination assumption will, in many cases, be a consequence of a Sobolev embedding theorem.
The spectral assumption, on the other hand, has to be checked by different means;
however, the spectral assumption turns out to be, in a sense, necessary for the conclusions 
of our main results to hold; see the explanation at the end of the introduction.
It is also worthwhile noting that our spectral assumption can, in a sense,
be regarded as an abstract version of the spectral conditions that appear in
\cite[Assumption~2 and Theorem~3]{ClementSweers2000} for a class of concrete differential operators.

\subsection*{Main result}

The following theorem is our main result; 
it gives sufficient conditions for the uniform maximum or anti-maximum principle to be satisfied.  
For $\lambda \in \resSet(A)$, we denote the resolvent of $A$ at $\lambda$ 
by $\Res(\lambda,A) := (\lambda - A)^{-1}$, which is a bounded linear operator on $E$.

\begin{theorem}
	\label{thm:main}
	Assume that both the domination and the spectral assumption are satisfied in Setting~\ref{sett:general}. Then every spectral value of $A$ is isolated and the following assertions hold for the eigenvalue $\lambda_0$.
	\begin{enumerate}[\upshape (a)]
		\item\label{thm:main:item:max}
		\emph{Uniform maximum principle} via a lower bound:		
		If there exists a number $\mu_0 > \lambda_0$ in the resolvent set of $A$ 
		such that $\Res(\mu_0, A) \succeq - u \otimes \varphi$, then the estimate
		\[
			\Res(\mu,A) \succeq u \otimes \varphi
		\]
		holds for all $\mu$ in a right neighbourhood of $\lambda_0$.
		
		\item\label{thm:main:item:anti-max}
		\emph{Uniform anti-maximum principle} via an upper bound: 
		If there exists a number $\mu_0 < \lambda_0$ in the resolvent set of $A$ 
		such that $\Res(\mu_0, A) \preceq u \otimes \varphi$, then the estimate
		\[
			\Res(\mu,A) \preceq -u \otimes \varphi
		\]
		holds for all $\mu$ in a left neighbourhood of $\lambda_0$.
	\end{enumerate}
\end{theorem}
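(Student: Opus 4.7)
The plan is to use the Laurent expansion of the resolvent at $\lambda_0$, show that the singular part strictly dominates $u \otimes \varphi$ while the regular part stays uniformly controlled by $u \otimes \varphi$ in modulus near $\lambda_0$, and then exploit the blow-up of $1/(\mu-\lambda_0)$ to obtain the desired estimates.

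First I would upgrade the one-sided comparisons in Setting~\ref{sett:general} to two-sided ones. Any eigenvector of $A$ lies in $\dom{A^k}$ for every $k$, so the domination assumption yields $v \in \dom{A^{m_1}} \subseteq E_u$, i.e., $v \preceq u$; together with $v \succeq u$ this gives both $v \succeq u$ and $v \preceq u$, and an analogous argument on the dual side shows the same for $\psi$ and $\varphi$. In particular $\psi$ is strictly positive, so $\duality{\psi}{v} > 0$; after normalizing to $\duality{\psi}{v} = 1$, the rank-one projection $P := v \otimes \psi$ satisfies simultaneously $P \succeq u \otimes \varphi$ and $P \preceq u \otimes \varphi$. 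Next, the closed graph theorem applied to the domination assumption gives boundedness of $\Res(\mu_0, A)^{m_1}\colon E \to E_u$ and of $\Res(\bar\mu_0, A')^{m_2}\colon E' \to (E')_\varphi$ in the respective gauge norms; composing and dualizing then yields
\[
\modulus{\Res(\mu_0, A)^{m+1} f} \le C\, \duality{\varphi}{\modulus{f}}\, u \qquad \text{for all } f \in E.
\]
This rank-one type domination renders $\Res(\mu_0, A)^{m+1}$ compact by a standard Banach-lattice argument; consequently the spectrum of $A$ is discrete, so every spectral value is isolated. The geometric simplicity of $\lambda_0$ together with $\duality{\psi}{v} = 1$ then forces $\lambda_0$ to be a simple pole with spectral projection $P$, so we have the Laurent expansion
\[
\Res(\mu, A) = \frac{P}{\mu - \lambda_0} + R(\mu),
\]
where $R$ is holomorphic at $\lambda_0$ and satisfies $P R(\mu) = R(\mu) P = 0$.

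The crux -- and the step I expect to be the main obstacle -- is to deduce from the hypothesis of \ref{thm:main:item:max} the uniform lower bound $R(\mu) \succeq -u \otimes \varphi$ for $\mu$ in a right neighbourhood of $\lambda_0$. The hypothesis $\Res(\mu_0, A) \succeq -u \otimes \varphi$ together with $P \preceq u \otimes \varphi$ gives the pointwise bound $R(\mu_0) \succeq -u \otimes \varphi$. To propagate it, I would use the reduced-resolvent identity $R(\mu) - R(\mu_0) = (\mu_0 - \mu) R(\mu) R(\mu_0)$ together with the fact that $R(\mu)^{m+1} = \Res(\mu, A)^{m+1} Q$ for $Q := I - P$ inherits the rank-one type domination (the comparability $\psi \preceq \varphi$ keeps $Q$ bounded on the $\varphi$-side), and with the holomorphy of $R$ on a connected neighbourhood of $[\lambda_0, \mu_0]$ obtained in the previous step; a Cauchy-integral representation over a small contour around $\lambda_0$ then provides the desired uniform estimate. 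The symmetric argument handles \ref{thm:main:item:anti-max}.

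Finally, combining the Laurent expansion with $P \succeq c\, u \otimes \varphi$ and the uniform lower bound on $R$, for $\mu > \lambda_0$ sufficiently close to $\lambda_0$ one obtains
\[
\Res(\mu, A) \succeq \Bigl(\tfrac{c}{\mu - \lambda_0} - C\Bigr) u \otimes \varphi \succeq u \otimes \varphi,
\]
since the prefactor blows up; this proves \ref{thm:main:item:max}. Part \ref{thm:main:item:anti-max} follows symmetrically: the analogous upper bound $R(\mu) \preceq C\, u \otimes \varphi$ combined with the now-negative prefactor $1/(\mu - \lambda_0)$ for $\mu < \lambda_0$ yields $\Res(\mu, A) \preceq -u \otimes \varphi$ in a left neighbourhood of $\lambda_0$.
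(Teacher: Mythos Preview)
Your overall architecture matches the paper's: split off the singular part $P/(\mu-\lambda_0)$, show the remainder is bounded below by $-C\,u\otimes\varphi$ with $C$ independent of $\mu$, and let the blow-up of $1/(\mu-\lambda_0)$ win. The preparatory steps (two-sided comparability of $v,\psi$ with $u,\varphi$; compactness of a resolvent power; simple pole; $P = v\otimes\psi$) are correct and coincide with what the paper establishes in Proposition~\ref{prop:spectral-properties}.

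The gap is precisely where you flag it: obtaining $R(\mu)\ge -C\,u\otimes\varphi$ with $C$ \emph{independent of $\mu$} near $\lambda_0$. The three ingredients you list do not assemble into such a bound. The single reduced-resolvent identity leaves you with the term $(\mu_0-\mu)R(\mu)R(\mu_0)$, on which you have no order control; rank-one domination of $R(\mu)^{m+1}$ concerns the wrong object (a power of $R(\mu)$, not $R(\mu)$ itself); and a Cauchy integral over a contour around $\lambda_0$ yields operator-norm bounds in $\mathcal L(E)$, not inequalities of the form $\pm C\,u\otimes\varphi$. What actually repairs the argument is to \emph{iterate} the identity to order $m$, so that the remainder becomes $(\mu_0-\mu)^m R(\mu_0)^{m_1}\,R(\mu)\,R(\mu_0)^{m_2}$, and then to observe that $R(\mu_0)^{m_1}$ maps $E\to E_u$ while $R(\mu_0)^{m_2}$ extends to $E^\varphi\to E$ (from the domination assumption for $\Res(\mu_0,A)$, together with the fact that $Q=I-P$ preserves both structures since $v\preceq u$ and $\psi\preceq\varphi$). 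Holomorphy of $R$ then enters only via the uniform boundedness of $\|R(\mu)\|_{\mathcal L(E)}$ near $\lambda_0$, which makes the $E^\varphi\to E_u$ norm of the remainder uniformly bounded. For the finite part of the iterated expansion you additionally need $R(\mu_0)^{k+1}\succeq -u\otimes\varphi$ for every $k<m$; this does not follow automatically from $R(\mu_0)\succeq -u\otimes\varphi$ and requires a separate argument (here it is easy: $R(\mu_0)+c'P\ge 0$ and $R(\mu_0)P=0$ give $(R(\mu_0)+c'P)^n=R(\mu_0)^n+(c')^nP\ge 0$).

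The paper bypasses the reduced resolvent entirely. It works directly with $(\mu-\lambda_0)\Res(\mu,A)$, expands it about $\mu_0$ via Lemma~\ref{lem:finite-expansion-of-resolvent} into a finite sum in powers of $\Res(\mu_0,A)$ plus the remainder $(\mu_0-\mu)^m\Res(\mu_0,A)^m\,(\mu-\lambda_0)\Res(\mu,A)$, shows this remainder \emph{converges} to $P$ in the $E^\varphi\to E_u$ operator norm (hence eventually dominates $(c-\varepsilon)\,u\otimes\varphi$), and bounds the finite sum from below via Proposition~\ref{prop:domination-powers-one-sided}. This is the same mechanism as the one that fixes your argument, carried out on $\Res(\mu,A)$ rather than on $R(\mu)$, and with the extra factor $(\mu-\lambda_0)$ in front of the finite sum making its contribution vanish as $\mu\to\lambda_0$ instead of merely staying bounded.
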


After several sections of preparation, 
we will prove this theorem in Section~\ref{section:eventual-positivity}. 
Necessary conditions for uniform (anti-)maximum principles in terms of resolvent estimates
will be given in Theorem~\ref{thm:domination-interval-one-sided},
and by combining these necessary conditions with Theorem~\ref{thm:main}
one can, for instance, find a characterization of the uniform anti-maximum principle
under rather mild assumptions on the operator
(see Theorem~\ref{thm:anti-max-characterization} and Corollary~\ref{cor:anti-max-characterization}).
(Anti-)maximum principles for powers of the resolvent hold under weaker conditions
than in Theorem~\ref{thm:main},
as we show in Theorem~\ref{thm:ev-pos-powers}.
Finally, in Section~\ref{section:applications} we give plenty of applications to concrete differential operators.

In order to provide sufficient context for Theorem~\ref{thm:main}, 
several remarks and explanations are in order.
They are the content of the rest of this introduction.

\subsection*{Strong versions of the (anti-)maximum principle}

An important point to note is that the conclusions in both parts of Theorem~\ref{thm:main}
are not merely that the resolvent satisfies $\Res(\mu, A) \ge 0$ in part~(a) and $\Res(\mu,A) \le 0$ in part~(b),
but in fact, the stronger estimates $\Res(\mu,A) \succeq u \otimes \varphi$ and $\Res(\mu,A) \preceq - u \otimes \varphi$, respectively.
Thus, compared to the discussion at the beginning of the introduction, we  not only prove
a maximum and an anti-maximum principle, but also, in a sense, \emph{strong} versions thereof.

\subsection*{Related literature}

For concrete differential operators, uniform (anti-)maximum principles
have been discussed on many occasions in the literature. 
We refer to the references in Section~\ref{section:applications} for several concrete examples.
As a rule, these results are often based on kernel estimates along
with manipulations that rely on series expansions of the resolvent.

Attempts to unify these arguments in order to 
prove uniform (anti-)maximum principles in a general and abstract setting 
have, on the other hand, been rare.
The most general approach to uniform anti-maximum principles that we are aware of
is due to Tak\'a\v{c} \cite[Section~5]{Takac1996}.
Let us explain in which way our 
Theorem~\ref{thm:main}\ref{thm:main:item:anti-max}
generalizes the sufficient condition for the uniform anti-maximum principle
given by Tak\'a\v{c} in \cite[Theorem~5.2]{Takac1996}:
Assumption~(A2) in \cite[page~347]{Takac1996} requires, in our terminology, 
that the positive cone in $\dom{A}$ -- given by $\dom{A} \cap E_+$ -- has non-empty interior. 
In concrete applications,
this typically comes down to checking the domination assumption $\dom{A^{m_1}} \subseteq E_u$
in our Setting~\ref{sett:general} for the special case $m_1 = 1$.
The fact that we allow a priori for general $m_1$ enables us to 
very freely use Sobolev embedding theorems and in turn, handle a larger class of differential operators.
Assumption~(A3) in \cite[page~347]{Takac1996} requires
that the resolvent of $A$ is \emph{strongly positive}
at all $\lambda > \spb(A)$.
Our spectral assumptions in Setting~\ref{sett:general} are a consequence of this,
but they turn out to be satisfied in many situations where~(A3) is not.
The resolvent estimate~(35) in \cite[Theorem~5.2]{Takac1996}
is related to our setting as follows: 
the upper estimate in~(35) can be seen as a version of our assumption
$\Res(\mu_0, A) \preceq u \otimes \varphi$ in
Theorem~\ref{thm:main}\ref{thm:main:item:anti-max}.
The lower estimate in~(35) is not needed in our setting;
what replaces this estimate is, in a sense, our dual domination assumption
$\dom{(A')^{m_2}} \subseteq (E')_{\varphi}$, which can, again, 
be checked by means of Sobolev embedding theorems in many concrete situations.

Quite abstract results in the spirit of Theorem~\ref{thm:main}
have recently been proved by the first named of the present authors in 
\cite[Theorem~4.1 and Corollary~4.3]{Arora2021};
these results yield a \emph{local} version of the uniform (anti-)maximum principle.
If one chooses $E=F=G$ and $S = T = \id_G$ in \cite[Assumptions~2.1]{Arora2021},
results as in our Theorem~\ref{thm:main} can be recovered from
\cite[Theorem~4.1 and Corollary~4.3]{Arora2021};
yet, assumption~(c) in \cite[Theorem~4.1]{Arora2021} is again the 
domination assumption from our Setting~\ref{sett:general} for
the special case $m_1 = m_2 = 1$.
Thus, our Theorem~\ref{thm:main} can be employed for operators 
defined on larger domains.

\subsection*{Eventual positivity of resolvents}

The terminology about (anti-)maximum principles discussed above can also be rephrased in the language of
\emph{eventually positive} and \emph{eventually negative resolvents} introduced in \cite[Definition~4.1]{DanersGlueckKennedy2016a}. To clarify, fix an isolated real spectral value $\lambda_0$ of $A$. Then saying that $A$ satisfies the uniform maximum principle is the same as saying that $\Res(\argument,A)$ is \emph{uniformly eventually positive}, i.e., the operators $\Res(\lambda,A)$ are positive for all $\lambda$ greater than and sufficiently close to $\lambda_0$. By a positive operator, we mean that the positive cone $E_+$ is invariant under that operator. 

Note that eventually positive resolvent is a generalization of the notion \emph{positive resolvent}, which is defined as: $\Res(\lambda,A)$ exists and is positive for all $\lambda> \lambda_0$. This property is, for example, discussed in detail in \cite{Arendt1987},
and occurs, as mentioned above,
also in Tak\'a\v{c}' paper \cite{Takac1996} as a weaker form of assumption~(A3).
It is also intimately related to \emph{positive $C_0$-semigroups}, see for instance \cite[Corollary~11.4]{BatkaiKramarRhandi2017} (and compare also \cite[Chapters~B-II and~C-II]{Nagel1986}).

The individual maximum principle can similarly be restated by saying that the resolvent
is \emph{individually eventually positive}. Likewise, the anti-maximum principles translate into
uniform and individual \emph{eventual negativity} of the resolvent. In fact, the \emph{strong} (anti-)maximum principle can also be rephrased using the eventual positivity (negativity) terminology; see \cite[Section~4]{DanersGlueckKennedy2016b}. 

Both individual eventual positivity and negativity of the resolvent of various concrete differential operators have occurred on many occasions in the literature (where the term \emph{individual (anti-)maximum principle} was mostly used; see the beginning of the introduction for several references).
Recently, a detailed study of this concept in a general and abstract setting was undertaken in 
\cite[Section~4]{DanersGlueckKennedy2016a}, \cite[Section~4]{DanersGlueckKennedy2016b},
and \cite[Theorem~4.1]{DanersGlueck2017} which was mainly motivated by the analysis of \emph{eventually positive $C_0$-semigroups}.
In relation to this latter notion, the following observation is important:
After various results about and characterizations of
\emph{individually eventually positive} semigroups in the aforementioned papers, even \emph{uniform eventual positivity} was successfully characterized
for many classes of semigroups in \cite[Theorem~10.2.1]{Glueck2016} and \cite{DanersGlueck2018b}. However, uniform eventual positivity of resolvent was, until now, 
not understood at the same general level as for semigroups. 

We briefly note that a simple sufficient criterion for uniform eventual positivity of the resolvent can be derived from the series expansion of the resolvent, as was for instance done in \cite[Propositions~4.2 and 4.3]{DanersGlueckKennedy2016a} and \cite[Proposition~3.2]{DanersGlueck2018a}. For many operators, though, the main assumption of this criterion cannot be checked directly, so the criterion is of rather limited use.
On the other hand, the methods used in \cite[Theorem~10.2.1]{Glueck2016} 
and \cite{DanersGlueck2018b} to obtain uniform eventual positivity of the semigroup are not directly applicable either (this can, for instance, be observed using \cite[Theorem~1]{GrunauSweers2001} with $n=m=k=1$). For these reasons, until now, it has been difficult to obtain satisfactory abstract results about uniform eventual positivity of resolvents. Our Theorem~\ref{thm:main}, along with the necessary conditions 
in Theorem~\ref{thm:domination-interval-one-sided},
closes part of this gap.

\subsection*{Notes on the spectral and the domination condition}

We remark that characterization for individual eventual positivity and negativity of the resolvent was proved under the stronger assumption $\dom{A^{m_1}}\subseteq E_u$ with $m_1=1$ 
in \cite[Theorem~4.4]{DanersGlueckKennedy2016b}. As mentioned before, in Setting~\ref{sett:general}, we weaken this by allowing for general $m_1$. On the other hand, the motivation for the assumption on the dual $A'$ stems from a similar condition imposed on the dual generator in \cite[Theorem~3.1]{DanersGlueck2018b}.

Let us also note that the spectral assumption from Setting~\ref{sett:general} is,
under mild assumptions on the operator $A$, 
necessary for the maximum or anti-maximum principle in the conclusions of Theorem~\ref{thm:main}.
This was proved in \cite[Theorem~4.1]{DanersGlueck2017} and \cite[Corollary~3.3]{DanersGlueckKennedy2016b}.

\section{Upper and lower estimates by rank-$1$ operators}
\label{section:bounds-of-operators}

As indicated by our main result, Theorem~\ref{thm:main}, 
we will throughout deal with upper and lower estimates of bounded linear operators by rank-$1$ operators.
In order to have a well-stocked toolbox available, 
we first prove some fundamental results about those kinds of estimates.

First, we need a few more concepts from Banach lattice theory.
Let $E$ be a real or complex Banach lattice, $u\in E_+$, and $\varphi\in E'$ be strictly positive. 
We have already introduced the principal ideal $E_u$ and the gauge norm on it in the introduction; 
now we explain a construction that behaves, in a sense, dually. 
Define
\[
	\norm{f}_\varphi:=\duality{\varphi}{\modulus{f}} 
\]
for each $f\in E$. 
Then $\norm{\argument}_\varphi$ is a norm on $E$; 
we denote the completion of $E$ with respect to this norm by $E^\varphi$, 
and for the sake of simplicity, we denote the norm on $E^\varphi$ also by $\norm{\argument}_\varphi$. 
It turns out that $E^\varphi$ is a Banach lattice 
with the special property that $\norm{f+g}_\varphi = \norm{f}_\varphi+ \norm{g}_\varphi$ 
for all positive elements $f,g\in E^\varphi$, 
i.e., $E^\varphi$ is a so-called \emph{AL-space} 
(and thus, it is isometrically lattice isomorphic to an $L^1$-space \cite[Theorem~2.7.1]{Meyer-Nieberg1991}).

Both mappings $E_u \to E$ and $E \to E^\varphi$ are continuous, injective, and lattice homomorphisms. 
Of course, the second embedding always has a dense range 
and the range of the first embedding is dense if and only if $u$ is a quasi-interior point of $E$.
A bit more information on the space $E^\varphi$ and its relation to $E_u$ can be found 
at the beginning of \cite[Section~IV.3]{Schaefer1974}.

Let us illustrate the abstract construction outlined above, 
along with the concept of principal ideals and the gauge norm,
by two simple concrete examples.

\begin{examples}
	\label{ex:principal-ideal-and-norm-induced-by-functional}	
	\begin{enumerate}[(a)]
		\item 
		Let $(\Omega,\mu)$ be a finite measure space and $p \in [1,\infty]$. 
		Let $\one$ denote the constant one function in the space $E:=L^p(\Omega,\mu)$, 
		and consider the functional $E' \ni \varphi: f \mapsto \int_\Omega f\, d\mu$.
	
		Then it is not difficult to check that $L^\infty(\Omega,\mu) = E_{\one}$
		and that the gauge norm with respect to $\one$ coincides with the sup norm. 
		Moreover, the space $E^\varphi$ is $L^1(\Omega,\mu)$, and the norm $\norm{\argument}_\varphi$ 
		is simply the $1$-norm.
		
		\item
		Let $K$ be a compact Hausdorff space and consider the Banach lattice $E = C(K)$,
		i.e., the space of continuous scalar-valued functions on $K$, endowed with the sup norm.
		
		Let $u \in C(K)$ be a function which is strictly positive everywhere on $K$. 
		Then $C(K)_u = C(K)$, 
		and the gauge norm $\norm{\argument}_u$ is equivalent to the sup norm on $C(K)$; 
		both norms coincide if and only if $u$ is the constant function with value $1$.
		
		If $\mu \in C(K)'$ is a (Radon) measure on $K$ which is strictly positive on every non-empty open subset of $K$,
		then $C(K)^\mu$ is isometrically lattice isomorphic to the space $L^1(K, \mu)$.
	\end{enumerate}
\end{examples}

We now use the spaces $E_u$ and $E^\varphi$ to characterize 
a joint upper and lower estimate of a bounded linear operator by a rank-$1$ operator.
In part~\ref{prop:char-of-rank-1-domination:item:extension}
of the following proposition, $E_u$ is endowed with the gauge norm $\norm{\argument}_u$.

\begin{proposition}
	\label{prop:char-of-rank-1-domination}
	Let $E$ be a real or complex Banach lattice, let $u \in E_+$, and let $\varphi \in E'_+$ be strictly positive. For every bounded real linear operator $T: E \to E$, the following assertions are equivalent:
	\begin{enumerate}[ref=(\roman*)]
		\item\label{prop:char-of-rank-1-domination:item:modulus-domination} 
		There exists a number $c \in [0,\infty)$ such that $\modulus{Tx} \le c \, \duality{\varphi}{\modulus{x}} u$ for each $x \in E$.
		
		\item\label{prop:char-of-rank-1-domination:item:both-sides-domination} 
		There exists a number $\hat c \in [0,\infty)$ such that 
		$- \hat c \, u \otimes \varphi \le T \le \hat c \, u \otimes \varphi$
		(in short: we have $- u \otimes \varphi \preceq T \preceq u \otimes \varphi$).
		
		\item\label{prop:char-of-rank-1-domination:item:extension} 
		There exists a bounded linear operator $\tilde T: E^\varphi \to E_u$ 
		such that the following diagram commutes:
		\begin{center}
			\begin{tikzcd}
				E^\varphi      \arrow{rr}{\tilde T} & & E_u \arrow{d}{k} \\
				E \arrow{u}{j} \arrow{rr}{T}        & & E 
			\end{tikzcd}
		\end{center}
		Here, $j: E \to E^\varphi$ and $k: E_u \to E$ are the canonical embeddings.
	\end{enumerate}
\end{proposition}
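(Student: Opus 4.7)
The plan is to establish the equivalences via \ref{prop:char-of-rank-1-domination:item:modulus-domination} $\Leftrightarrow$ \ref{prop:char-of-rank-1-domination:item:both-sides-domination} and \ref{prop:char-of-rank-1-domination:item:modulus-domination} $\Leftrightarrow$ \ref{prop:char-of-rank-1-domination:item:extension}, noting that the numerical constants involved may differ from implication to implication. For \ref{prop:char-of-rank-1-domination:item:modulus-domination} $\Rightarrow$ \ref{prop:char-of-rank-1-domination:item:both-sides-domination} I would simply restrict the pointwise modulus estimate to positive $x \in E_+$, where $\langle \varphi, \modulus{x}\rangle u = (u \otimes \varphi)x$, to obtain $-c\, u \otimes \varphi \le T \le c\, u \otimes \varphi$ on the positive cone, which is precisely the meaning of the operator inequality. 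For the converse, I would first apply the assumption to $x \in E_+$ to recover $\modulus{Tx} \le \hat c\, \duality{\varphi}{x} u$, then handle $x \in E_\bbR$ via the decomposition $x = x^+ - x^-$ and $\modulus{Tx} \le \modulus{Tx^+} + \modulus{Tx^-}$, and finally treat complex $x = a + ib$ using that $T$ is real together with the estimate $\modulus{Ta + iTb} \le \modulus{Ta} + \modulus{Tb}$ in the complex lattice and the fact that $\modulus{a}, \modulus{b} \le \modulus{x}$; this yields \ref{prop:char-of-rank-1-domination:item:modulus-domination} with constant $2\hat c$.

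For \ref{prop:char-of-rank-1-domination:item:modulus-domination} $\Rightarrow$ \ref{prop:char-of-rank-1-domination:item:extension}, the key observation is that \ref{prop:char-of-rank-1-domination:item:modulus-domination} says precisely that $Tx$ lies in the principal ideal $E_u$ and that $\norm{Tx}_u \le c \duality{\varphi}{\modulus{x}} = c \norm{x}_\varphi$. Hence, viewing $T$ as a map $(E, \norm{\argument}_\varphi) \to (E_u, \norm{\argument}_u)$, it is bounded on a dense subspace of $E^\varphi$ with values in the complete space $E_u$, and therefore admits a unique bounded linear extension $\tilde T: E^\varphi \to E_u$. Commutativity of the diagram on $j(E)$ is immediate from the construction, and this suffices by density of $j(E)$ in $E^\varphi$ and continuity of $k \tilde T j$ and $T$ on the respective target norms (to conclude the diagram commutes as a diagram of maps into $E$).

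Conversely, for \ref{prop:char-of-rank-1-domination:item:extension} $\Rightarrow$ \ref{prop:char-of-rank-1-domination:item:modulus-domination}, I would use that both $j$ and $k$ are lattice homomorphisms, so $\modulus{Tx} = \modulus{k \tilde T j x} = \modulus{\tilde T j x}$ viewed in $E$ via the embedding $k$. The defining property of the gauge norm, together with the Archimedean property of the Banach lattice $E_u$, yields the attained bound $\modulus{\tilde T j x} \le \norm{\tilde T j x}_u \, u \le \norm{\tilde T}\, \norm{x}_\varphi\, u = \norm{\tilde T}\, \duality{\varphi}{\modulus{x}}\, u$, which is \ref{prop:char-of-rank-1-domination:item:modulus-domination} with $c = \norm{\tilde T}$. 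The main point that warrants care -- rather than being a real obstacle -- is the extension step in \ref{prop:char-of-rank-1-domination:item:modulus-domination} $\Rightarrow$ \ref{prop:char-of-rank-1-domination:item:extension}: one must realize that \ref{prop:char-of-rank-1-domination:item:modulus-domination} automatically forces $T$ to factor through $E_u$ at the level of vectors (not just through $E$) and that the induced map is continuous for the two non-standard norms; once this is seen, the completion argument and the verification $\modulus{y} \le \norm{y}_u u$ in $E_u$ are routine.
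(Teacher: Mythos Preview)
Your proof is correct and follows essentially the same route as the paper: the implications \ref{prop:char-of-rank-1-domination:item:modulus-domination}~$\Rightarrow$~\ref{prop:char-of-rank-1-domination:item:both-sides-domination}, \ref{prop:char-of-rank-1-domination:item:modulus-domination}~$\Rightarrow$~\ref{prop:char-of-rank-1-domination:item:extension}, and \ref{prop:char-of-rank-1-domination:item:extension}~$\Rightarrow$~\ref{prop:char-of-rank-1-domination:item:modulus-domination} are argued identically, including the identification $c = \lVert \tilde T\rVert_{E^\varphi\to E_u}$ in the last step.

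The only notable difference is in the complex case of \ref{prop:char-of-rank-1-domination:item:both-sides-domination}~$\Rightarrow$~\ref{prop:char-of-rank-1-domination:item:modulus-domination}. You use the elementary estimates $\lvert Ta + iTb\rvert \le \lvert Ta\rvert + \lvert Tb\rvert$ and $\lvert a\rvert,\lvert b\rvert \le \lvert x\rvert$, which yields the constant $2\hat c$. The paper instead invokes two formulas for the modulus in a complex Banach lattice (the second one justified via Kakutani's representation theorem) to obtain the sharp constant $\hat c$. Since the proposition only asserts existence of \emph{some} constant, your simpler argument is perfectly adequate, and the subsequent Remark in the paper (which tracks constants only through the implication \ref{prop:char-of-rank-1-domination:item:extension}~$\Rightarrow$~\ref{prop:char-of-rank-1-domination:item:modulus-domination}) is unaffected.
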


\begin{proof}
	\Implies{prop:char-of-rank-1-domination:item:modulus-domination}{prop:char-of-rank-1-domination:item:both-sides-domination}
	The inequality in~\ref{prop:char-of-rank-1-domination:item:modulus-domination} implies that
	\[
		-c \, \duality{\varphi}{x} u 
		\le
		- \modulus{Tx}
		\le 
		Tx
		\le 
		\modulus{Tx}
		\le 
		c \, \duality{\varphi}{x} u
	\]
	for each $x \in E_+$, which yields~\ref{prop:char-of-rank-1-domination:item:both-sides-domination}
	for $\hat c = c$.
	
	\Implies{prop:char-of-rank-1-domination:item:both-sides-domination}{prop:char-of-rank-1-domination:item:modulus-domination}
	By~\ref{prop:char-of-rank-1-domination:item:both-sides-domination},
	there exists a number $\hat c \ge 0$ such that
	$-\hat  c \, u \otimes \varphi \le T \le\hat  c \, u \otimes \varphi$. 
	This implies that, for each $x \in E_+$, we have $\modulus{Tx} \le\hat  c \duality{\varphi}{x} u$.
	For each vector $x$ in the real part of $E$, we thus obtain
	\[
		\modulus{T x^+} \le\hat  c \duality{\varphi}{x^+} u
		\qquad \text{and} \qquad 
		\modulus{T x^-} \le\hat  c \duality{\varphi}{x^-} u
	\]
	and hence, $\modulus{Tx} \le \modulus{T x^+} + \modulus{T x^-} \le\hat  c \duality{\varphi}{\modulus{x}} u$.
	
	Finally, in case that scalar field is complex, consider a general vector $x \in E$.
	The modulus of each vector $z \in E$ can be written as
	\begin{align*}
		\modulus{z} 
		= 
		\sup_{\theta \in [0,2\pi)} \modulus{(\cos \theta) \re z + (\sin \theta) \im z}
	\end{align*}
	and, alternatively, as
	\begin{align*}
		\modulus{z} 
		= 
		\sup_{\theta \in [0,2\pi)} \modulus{\cos \theta} \modulus{\re z}  +  \modulus{\sin \theta} \modulus{\im z};
	\end{align*}
	the first formula is the definition of $\modulus{z}$ in a complex Banach lattice, 
	and the second formula follows,
	e.g., from Kakutani's representation theorem for AM-spaces \cite[Theorem~2.1.3]{Meyer-Nieberg1991}
	(applied to the principal ideal generated by $\modulus{\re z} + \modulus{\im z}$)
	since the formula holds for complex numbers $z$.
	
	If we apply the first formula to $\modulus{Tx}$ and the second formula to $\modulus{x}$, 
	inequality~\ref{prop:char-of-rank-1-domination:item:modulus-domination} follows 
	since we have already proved it for real vectors $x$.
	
	\Implies{prop:char-of-rank-1-domination:item:modulus-domination}{prop:char-of-rank-1-domination:item:extension} 
	If $\modulus{Tx} \le c \duality{\varphi}{\modulus{x}} u$ for each $x \in E$, 
	then $T E \subseteq E_u$ and ${\norm{Tx}_u \leq c \norm{x}_{E^{\varphi}}}$ for each $x\in E$. 
	Using the density of $E$ in $E^\varphi$, the existence of $\tilde T$ follows.
	
	\Implies{prop:char-of-rank-1-domination:item:extension}{prop:char-of-rank-1-domination:item:modulus-domination} 
	By the commutativity of the diagram, $T$ maps into $E_u$ 
	(more precisely, into the image of $E_u$ under $k$). Let $c:=\norm{\tilde T}_{E^\varphi \to E_u}$.
	For each $x \in E$ we have
	\[
		\norm{\tilde T j x}_u \le c \norm{j x}_\varphi = c\duality{\varphi}{\modulus{x}},
	\]
	so $\modulus{\tilde T j x} \le c\duality{\varphi}{\modulus{x}} u$.
	From this, we conclude that
	\[
		\modulus{Tx} = c\modulus{k \tilde T j x} \le c\duality{\varphi}{\modulus{x}} u,
	\]
	which proves~\ref{prop:char-of-rank-1-domination:item:modulus-domination}.
\end{proof}

The following simple observation about the constants in the previous proposition
will be useful later on:

\begin{remark}
	\label{rem:char-of-rank-1-domination-constants}
	The proof of Proposition~\ref{prop:char-of-rank-1-domination} shows that:
	if~\ref{prop:char-of-rank-1-domination:item:extension} holds,
	then~\ref{prop:char-of-rank-1-domination:item:modulus-domination}
	and~\ref{prop:char-of-rank-1-domination:item:both-sides-domination}
	hold with
	\[
		\hat c = c = \norm{\tilde T}_{E^\varphi \to E_u}.
	\]
\end{remark}

A nice consequence of the previous proposition is the following multiplicative permanence property 
of operators that can be estimated above and below by a given rank-$1$ operator.

\begin{proposition}
	\label{prop:rank-1-domination-composition}
	Let $E$ be a real or complex Banach lattice, let $u \in E_+$, 
	and let $\varphi \in E'_+$ be strictly positive.
	Let $T_1, S, T_2: E \to E$ be bounded real linear operators 
	and assume that $T_1$ and $T_2$ both satisfy
	the equivalent conditions of Proposition~\ref{prop:char-of-rank-1-domination}.
	
	Then $T_2 S T_1$ also satisfies 
	the equivalent conditions of Proposition~\ref{prop:char-of-rank-1-domination}.
\end{proposition}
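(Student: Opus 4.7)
The cleanest approach is to exploit the factorization characterization~\ref{prop:char-of-rank-1-domination:item:extension} in Proposition~\ref{prop:char-of-rank-1-domination}, since composition respects factorizations much more naturally than pointwise modulus estimates. The conditions~\ref{prop:char-of-rank-1-domination:item:modulus-domination} and~\ref{prop:char-of-rank-1-domination:item:both-sides-domination} both involve bounds that would require an awkward iterated estimate, whereas~\ref{prop:char-of-rank-1-domination:item:extension} reduces the problem to composing bounded linear maps between Banach spaces.

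By hypothesis, for each $i \in \{1,2\}$ there exists a bounded linear operator $\tilde T_i : E^\varphi \to E_u$ such that $T_i = k \tilde T_i j$, where $j : E \to E^\varphi$ and $k : E_u \to E$ are the canonical embeddings. Since $S$ is a bounded linear operator on $E$ and both $j$ and $k$ are bounded, the composition
\[
    jSk : E_u \to E^\varphi
\]
is a bounded linear operator. The plan is to set
\[
    \tilde T := \tilde T_2 \circ (jSk) \circ \tilde T_1 : E^\varphi \xrightarrow{\tilde T_1} E_u \xrightarrow{jSk} E^\varphi \xrightarrow{\tilde T_2} E_u,
\]
which is then bounded as a composition of bounded operators.

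It remains to verify that $\tilde T$ makes the diagram in~\ref{prop:char-of-rank-1-domination:item:extension} commute for the operator $T_2 S T_1$. Using the factorizations $T_1 = k \tilde T_1 j$ and $T_2 = k \tilde T_2 j$ one computes
\[
    k \tilde T j
    = k \tilde T_2 (jSk) \tilde T_1 j
    = (k \tilde T_2 j) \, S \, (k \tilde T_1 j)
    = T_2 S T_1,
\]
which is exactly the required commutativity. Since $\tilde T$ is bounded from $E^\varphi$ to $E_u$, Proposition~\ref{prop:char-of-rank-1-domination} yields that $T_2 S T_1$ satisfies all three equivalent conditions, finishing the proof. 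There is no genuine obstacle here; the only thing to flag is the need for $S$ to be \emph{real} (which is given), so that the real-linear factorization is preserved and the inequalities in~\ref{prop:char-of-rank-1-domination:item:both-sides-domination} remain meaningful.
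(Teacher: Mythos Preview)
Your proof is correct and is essentially identical to the paper's: both verify condition~\ref{prop:char-of-rank-1-domination:item:extension} by defining $\tilde T := \tilde T_2 \, jSk \, \tilde T_1$ and checking that $k \tilde T j = T_2 S T_1$. The paper presents this via two commutative diagrams, but the content is the same.
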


\begin{proof}
	We use the the same notation as in Proposition~\ref{prop:char-of-rank-1-domination}.
	From the commutative diagram
	\begin{center}
		\begin{tikzcd}
			E^\varphi      \arrow{rr}{\tilde T_1} & & E_u \arrow{d}{k}  & & E^\varphi      \arrow{rr}{\tilde T_2} & & E_u \arrow{d}{k}
			\\
			E \arrow{u}{j} \arrow{rr}{T_1}        & & E   \arrow{rr}{S} & & E \arrow{u}{j} \arrow{rr}{T_2}        & & E
		\end{tikzcd}
	\end{center}
	we immediately get the commutative diagram
	\begin{center}
		\begin{tikzcd}
			E^\varphi      \arrow{rrr}{\tilde T_2 j \, S \, k \tilde T_1} & & & E_u \arrow{d}{k} \\
			E \arrow{u}{j} \arrow{rrr}{T_2 S T_1}               & & & E
		\end{tikzcd}
		\, ,
	\end{center}
	which proves the assertion.
\end{proof}

Let us now explain how the equivalent conditions in Proposition~\ref{prop:char-of-rank-1-domination}
can be checked in concrete situations.
Here, we need a result from \cite[Proposition~2.1]{DanersGlueck2018b} which says the following:
let $E$ be a complex Banach lattice and let $\varphi \in E'$ be strictly positive.
If $T: E \to E$ is a bounded linear operator and 
the range of its dual operator satisfies the domination condition
\begin{align*}
	T'E' \subseteq (E')_\varphi,
\end{align*}
then $T$ extends to a bounded linear operator $E^\varphi \to E$.
This result easily implies the following:

\begin{proposition}
	\label{prop:domination-imlies-estimate-for-product}
	Let $E$ be a real or complex Banach lattice, let $u \in E_+$, 
	and let $\varphi \in E'_+$ be strictly positive.
	If $T_1,T_2,T_3: E \to E$ are bounded real linear operators which satisfy
	the domination conditions
	\begin{align*}
		T_1'E' \subseteq (E')_\varphi
		\qquad \text{and} \qquad 
		T_3 E \subseteq E_u,
	\end{align*}
	then the product $T := T_3 T_2 T_1$ satisifies the equivalent 
	conditions~\ref{prop:char-of-rank-1-domination:item:modulus-domination}--\ref{prop:char-of-rank-1-domination:item:extension}
	in Proposition~\ref{prop:char-of-rank-1-domination}.
\end{proposition}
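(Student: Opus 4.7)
The plan is to use the cited extension result from \cite{DanersGlueck2018b} on the left factor, the closed graph theorem on the right factor, and then assemble the two into the commutative diagram appearing in condition~\ref{prop:char-of-rank-1-domination:item:extension} of Proposition~\ref{prop:char-of-rank-1-domination}, which by that proposition already gives the two other equivalent estimates.

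First, I would apply the result from \cite[Proposition~2.1]{DanersGlueck2018b} mentioned right before the statement to the operator $T_1$: since $T_1' E' \subseteq (E')_\varphi$, we obtain a unique bounded extension $\hat T_1 : E^\varphi \to E$ satisfying $\hat T_1 \circ j = T_1$, where $j : E \to E^\varphi$ is the canonical embedding.

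Next, I would produce the companion factorization on the other side. The hypothesis $T_3 E \subseteq E_u$ means that $T_3$ defines a linear map $\hat T_3 : E \to E_u$ with $k \circ \hat T_3 = T_3$, where $k : E_u \to E$ is the canonical embedding. To see that $\hat T_3$ is bounded, I would invoke the closed graph theorem: if $x_n \to x$ in $E$ and $\hat T_3 x_n \to y$ in $E_u$, then continuity of $k$ gives $T_3 x_n = k \hat T_3 x_n \to k y$ in $E$, while boundedness of $T_3$ gives $T_3 x_n \to T_3 x$; hence $k \hat T_3 x = k y$, and injectivity of $k$ forces $\hat T_3 x = y$.

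With $\hat T_1$ and $\hat T_3$ in hand, I would set $\tilde T := \hat T_3 \, T_2 \, \hat T_1 : E^\varphi \to E_u$, which is bounded as a composition of bounded operators. A direct check then yields $k \, \tilde T \, j = k \hat T_3 \, T_2 \, \hat T_1 \, j = T_3 T_2 T_1 = T$, which is exactly the commutative diagram in condition~\ref{prop:char-of-rank-1-domination:item:extension} of Proposition~\ref{prop:char-of-rank-1-domination}; hence $T$ satisfies all of the equivalent conditions there. There is no real obstacle in this argument: the substantive analytic content is encapsulated in the cited extension theorem, and the factorization of $T_3$ through $E_u$ is a routine closed-graph application using the injectivity of the embedding $k$.
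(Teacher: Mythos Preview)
Your proof is correct and follows essentially the same approach as the paper: both apply the cited extension result to $T_1$, use the closed graph theorem to view $T_3$ as a bounded operator into $E_u$, and then compose to obtain $\tilde T = \hat T_3\, T_2\, \hat T_1$ realizing condition~\ref{prop:char-of-rank-1-domination:item:extension}. Your write-up is actually slightly more explicit in verifying the closed-graph hypothesis and the commutativity $k\,\tilde T\,j = T$, but the argument is the same.
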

\begin{proof}
	We prove that the operator $T = T_3 T_2 T_1$
	satisfies condition~\ref{prop:char-of-rank-1-domination:item:extension}
	in Proposition~\ref{prop:char-of-rank-1-domination}.
	
	To this end, first note that, by the closed graph theorem, 
	$T_3$ acts continuously as an operator from $E$ to $E_u$ 
	(where the latter space is endowed with the gauge norm $\norm{\argument}_u$).
	Strictly speaking, an operator is determined not only by its domain and action,
	but also by its range -- so let us be precise here and 
	use the notation $\hat T_3: E \to E_u$ for the operator
	from $E$ to $E_u$ that acts as $T_3$.
	
	As mentioned right before the proposition,
	the operator $T_1$ extends to a continuous linear operator $\hat T_1$ 
	from $E^\varphi$ to $E$.
	Hence, condition~\ref{prop:char-of-rank-1-domination:item:extension}
	in Proposition~\ref{prop:char-of-rank-1-domination}
	is satisfied for the operator $\tilde T := \hat T_3 T_2 \hat T_1: E^\varphi \to E_u$.
\end{proof}

As an immediate consequence of the previous proposition, 
we get a resolvent estimate if our domination assumption from Setting~\ref{sett:general}
is satisfied:

\begin{corollary}
	\label{cor:two-sided-estimate-of-resolent-power-by-domination-assumption}
	Assume that the domination assumption in Setting~\ref{sett:general} is satisfied
	and let $\mu,\mu_1$, and $\mu_2$ be real numbers in the resolvent set $\resSet(A)$.
	
	Then the operator $\Res(\mu_1,A)^{m_1} \Res(\mu_2,A)^{m_2}$ satisfies the equivalent assertions
	in Proposition~\ref{prop:char-of-rank-1-domination};
	in particular, the power $\Res(\mu,A)^m$ satisfies these equivalent assertions as well.
\end{corollary}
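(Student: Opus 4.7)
The plan is to apply Proposition~\ref{prop:domination-imlies-estimate-for-product} directly with the choices
\[
    T_3 := \Res(\mu_1,A)^{m_1}, \qquad T_2 := \id_E, \qquad T_1 := \Res(\mu_2,A)^{m_2}.
\]
All three are bounded real linear operators on $E$ (bounded because $\mu_1,\mu_2 \in \resSet(A)$, and real because $A$ is real and $\mu_1,\mu_2 \in \bbR$). Thus the product $T_3 T_2 T_1$ equals precisely the operator $\Res(\mu_1,A)^{m_1}\Res(\mu_2,A)^{m_2}$ that we want to estimate.

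I then need to verify the two domination hypotheses of Proposition~\ref{prop:domination-imlies-estimate-for-product}. For the range condition on $T_3$, note that $\Res(\mu_1,A)^{m_1}$ maps $E$ bijectively onto $\dom{A^{m_1}}$, so by the first part of the domination assumption in Setting~\ref{sett:general}, we obtain $T_3 E = \dom{A^{m_1}} \subseteq E_u$. For the dual condition on $T_1$, I use that the adjoint of the resolvent is the resolvent of the adjoint: since $A$ is densely defined and closed, $\Res(\mu_2,A)' = \Res(\mu_2,A')$, and taking $m_2$-th powers gives $T_1' = (\Res(\mu_2,A)^{m_2})' = \Res(\mu_2,A')^{m_2}$. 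This operator maps $E'$ bijectively onto $\dom{(A')^{m_2}}$, which is contained in $(E')_\varphi$ by the second part of the domination assumption. Hence $T_1' E' \subseteq (E')_\varphi$.

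With both hypotheses verified, Proposition~\ref{prop:domination-imlies-estimate-for-product} yields that $\Res(\mu_1,A)^{m_1}\Res(\mu_2,A)^{m_2}$ satisfies the equivalent conditions of Proposition~\ref{prop:char-of-rank-1-domination}. The "in particular" statement follows by specializing to $\mu_1 = \mu_2 = \mu$, in which case the product collapses to $\Res(\mu,A)^{m_1+m_2} = \Res(\mu,A)^{m}$ by the resolvent's commutativity with itself.

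There is no real obstacle here; the argument is essentially a bookkeeping exercise once one recognizes the correct identification of the operators $T_1$ and $T_3$. The only small subtlety is the identity $(\Res(\mu_2,A)^{m_2})' = \Res(\mu_2,A')^{m_2}$, which follows from standard duality for bounded operators together with the fact that the adjoint of the resolvent of a densely defined closed operator equals the resolvent of the adjoint.
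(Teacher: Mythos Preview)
Your proof is correct and follows exactly the approach the paper intends: the corollary is stated without proof precisely because it is an immediate application of Proposition~\ref{prop:domination-imlies-estimate-for-product} with $T_3 = \Res(\mu_1,A)^{m_1}$, $T_2 = \id_E$, and $T_1 = \Res(\mu_2,A)^{m_2}$, and you have filled in the routine verifications (range inclusions via the domination assumption and the identity $\Res(\mu_2,A)' = \Res(\mu_2,A')$) cleanly.
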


Let us close this section with a note on compactness 
for operators which satisfy the equivalent conditions of 
Proposition~\ref{prop:char-of-rank-1-domination}. 
Recall that a real Banach lattice $E$ is said to be \emph{order complete} 
if every non-empty order bounded subset of $E$ has a supremum in $E$
and a complex Banach lattice $E$ is called \emph{order complete} 
whenever its real part is order complete. 
Moreover, a linear operator between two Banach lattices is called \emph{regular} 
if it can be written as a difference of two positive operators.

\begin{corollary}
	\label{cor:dom-implies-compactness}
	In the situation of Proposition~\ref{prop:char-of-rank-1-domination}, 
	assume that the bounded real linear operator $T: E \to E$ satisfies the equivalent 
	assertions~\ref{prop:char-of-rank-1-domination:item:modulus-domination}--\ref{prop:char-of-rank-1-domination:item:extension}. 
	Then $T^3$ is compact.
\end{corollary}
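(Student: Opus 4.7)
The plan is to exploit the factorization $T = k\tilde T j$ from Proposition~\ref{prop:char-of-rank-1-domination}\ref{prop:char-of-rank-1-domination:item:extension}, together with the very special Banach lattice structure of the intermediate spaces: equipped with its gauge norm, $E_u$ is an AM-space with order unit $u$ (and hence, by Kakutani's representation theorem, isometrically lattice isomorphic to some $C(K)$), whereas $E^\varphi$ is an AL-space and hence isomorphic to some $L^1(\mu)$. Expanding and regrouping, one has
\[
	T^3 \;=\; k\tilde T\,(jk)\,\tilde T\,(jk)\,\tilde T\, j \;=\; k\tilde T\,R^2\,j,
\]
where $R := jk\tilde T : E^\varphi \to E^\varphi$. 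Since compactness is preserved under composition with bounded operators, it suffices to prove that $R^2$ is compact.

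For this I would combine two classical ingredients. First, $jk: E_u \to E^\varphi$ is a bounded operator from a $C(K)$-space into the weakly sequentially complete AL-space $E^\varphi$, so Grothendieck's weak compactness theorem yields that $jk$ is weakly compact; composing with the bounded operator $\tilde T$, the operator $R$ is weakly compact as well. Second, every AL-space enjoys the Dunford--Pettis property, so any weakly compact operator on $E^\varphi$ -- in particular $R$ -- is Dunford--Pettis. This forces $R^2$ to be compact: the first application of $R$ sends the unit ball into a relatively weakly compact set, and the Dunford--Pettis property of the second $R$ then turns this set into a relatively norm compact one.

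The main obstacle is not really technical but conceptual: one must recognize that the intermediate spaces $E_u$ and $E^\varphi$ sit in exactly the ``$C(K)$ versus $L^1$'' position that simultaneously activates Grothendieck's theorem and the Dunford--Pettis property. A more lattice-theoretic alternative would be to decompose $T$ as a difference of two positive operators each dominated by the compact rank-one operator $\hat c\,u\otimes\varphi$, and then invoke the Aliprantis--Burkinshaw compact-domination theorem on the resulting pieces; however, this requires additional bookkeeping for the mixed products, whereas the factorization route makes the exponent~$3$ appear naturally from the count of $R$-factors needed to square a weakly compact operator on a space with the Dunford--Pettis property.
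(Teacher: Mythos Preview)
Your proof is correct and takes a genuinely different route from the paper's. The paper passes to the bidual $E''$ (to secure order completeness), decomposes $T''$ into its positive and negative parts---each dominated by the compact rank-one operator $c\,u\otimes\varphi$---and then invokes the ``cube theorem'' \cite[Corollary~3.7.14]{Meyer-Nieberg1991} that the product of three positive operators dominated by compact operators is compact; expanding $(T'')^3 = \big((T'')^+ - (T'')^-\big)^3$ exhibits $(T^3)''$ as a sum of eight compact operators. Your approach instead stays in $E$ and exploits the factorization $T = k\tilde T j$ together with the concrete structure of the intermediate spaces: $E_u$ is an AM-space with unit (hence a $C(K)$-space) and $E^\varphi$ is an AL-space, so Grothendieck's theorem makes $jk$ weakly compact, and the Dunford--Pettis property of $E^\varphi$ then forces $R^2$ to be compact. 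Your route avoids the bidual and any order-theoretic decomposition of $T$, and the exponent~$3$ emerges directly from the ``square of a weakly compact operator on a DPP space'' mechanism; the paper's route is more purely lattice-theoretic and ties the result explicitly to the compact-domination literature. Amusingly, the alternative you sketch in your final paragraph is essentially the paper's own argument.
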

\begin{proof}
	We may assume throughout the proof that the underlying scalar field is real. 
	By assumption, there exists a number $c \in (0,\infty)$ such that
	$- c \; u \otimes \varphi \le T \le c \; u \otimes \varphi$.
	For the following argument, we need that the operator under consideration possesses a positive and a negative part; 
	since this is, in general, only true on order complete Banach lattices, therefore, let us switch to the bi-dual space of $E$:
	
	We have $(u \otimes \varphi)'' = u \otimes \varphi$, where we interpret $\varphi$ as a functional on $E''$. 
	Clearly, the estimate 
	\begin{align*}
		- c \; u \otimes \varphi \le T'' \le c \; u \otimes \varphi
	\end{align*}
	holds. This inequality shows that $T''$ is a regular operator, 
	and hence -- as $E''$ is order complete -- $T''$ has a modulus in $\calL(E'')$ 
	(see e.g.\ \cite[Proposition~IV.1.2]{Schaefer1974}) 
	which satisfies $\modulus{T''} \le c \; u \otimes \varphi$. 
	Consequently, the positive and the negative part of $T''$ 
	-- i.e., $(T'')^+$ and $(T'')^-$ -- 
	exist and are also dominated by the rank-$1$ operator $c \; u \otimes \varphi$.
	
	It is known that, if three positive operators on a Banach lattice are dominated by compact operators, 
	then their product is compact as well \cite[Corollary~3.7.14]{Meyer-Nieberg1991}; 
	hence, the operator
	\begin{align*}
		(T^3)'' = (T'')^3 = \left( (T'')^+ - (T'')^- \right)^3
	\end{align*}
	is the sum of eight compact operators, and is thus itself compact. 
	Consequently, $T^3$ is compact, as asserted.
\end{proof}

We note that the third power in Corollary~\ref{cor:dom-implies-compactness} is optimal in general:
for instance, the positive operator $S$ in \cite[Example~(ii) on p.\,224]{Meyer-Nieberg1991}
is dominated by a positive rank-$2$ operator -- and hence, also by a positive rank-$1$ operator --
but its square $S^2$ is not compact.

By combining Proposition~\ref{prop:domination-imlies-estimate-for-product}
and Corollary~\ref{cor:dom-implies-compactness} one can conclude that,
if a bounded real operator $T: E \to E$ satisfies the domination conditions 
\begin{align*}
	T'E' \subseteq (E')_\varphi
	\qquad \text{and} \qquad 
	T E \subseteq E_u,
\end{align*}
for a vector $0 \le u \in E$ and a strictly positive functional $\varphi \in E'$,
then the power $T^6$ is compact.
However, this result is not optimal under the given domination assumptions:
in \cite[Corollary~2.3]{DanersGlueck2018b} it was shown that 
even $T^4: E \to E$ is compact.

\section{Compactness of the resolvent}
\label{section:resolvent-compactness}

In both \cite[Section~4]{DanersGlueckKennedy2016a} and  \cite[Section~4]{DanersGlueckKennedy2016b}, 
\emph{individual} eventual strong positivity of the resolvent of an operator $A$ at a pole was characterized. 
In particular, it was shown that the pole must be algebraically simple 
and have its order equal to $1$. 
In the following proposition, we show that the domination assumption in Setting~\ref{sett:general} 
guarantees the eigenvalue $\lambda_0$ is a pole of the resolvent $\Res(\argument,A)$, 
and that, in conjunction with the spectral assumption,
we even get that the eigenvalue $\lambda_0$ is algebraically simple with pole order $1$. Recall that an eigenvalue $\lambda_0$ is said to be \emph{algebraically simple} if the dimension of the generalized eigenspace $\bigcup_{n\in\bbN} \ker(\lambda_0-A)^n$ is equal to one.

\begin{proposition}
	\label{prop:spectral-properties}
	Assume that the domination assumption is satisfied in Setting~\ref{sett:general}. Then:
	\begin{enumerate}
		\item[\upshape (a)] 
		If $\mu$ is a real number in the resolvent set $\resSet(A)$,
		then the operator $\Res(\mu,A)^{3m}$ is compact and thus, 
		$A$ has at most countably many spectral values, 
		and each spectral value is isolated and a pole of the resolvent with finite-rank residuum.
	\end{enumerate}
	If, in addition, the spectral assumption is also satisfied, then:
	\begin{enumerate}
		\item[\upshape (b)] 
		The order of $\lambda_0$ as a pole 
		of the resolvents $\Res(\argument,A)$ and $\Res(\argument,A')$ equals $1$;
		hence, $\lambda_0$ is an algebraically simple eigenvalue of both $A$ and $A'$.
		
		\item[\upshape (c)] 
		The eigenvectors $v \in \ker(\lambda_0-A)$ and $\psi \in \ker(\lambda_0-A')$ 
		satisfy $\duality{\psi}{v} > 0$, 
		and the operator $\frac{\psi \otimes v}{\duality{\psi}{v}}$ is the spectral projection of $A$ 
		associated with the spectral value $\lambda_0$.
	\end{enumerate}
\end{proposition}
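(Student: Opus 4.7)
The plan is to extract part~(a) directly from the machinery of Section~\ref{section:bounds-of-operators} and then to combine the finite-dimensional structure this gives with the positivity data of the spectral assumption to control the pole order in~(b) and recover the shape of the spectral projection in~(c).

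For~(a), I would first apply Corollary~\ref{cor:two-sided-estimate-of-resolent-power-by-domination-assumption} with $\mu_1=\mu_2=\mu$ to place $\Res(\mu,A)^m$ in the framework of Proposition~\ref{prop:char-of-rank-1-domination}, and then invoke Corollary~\ref{cor:dom-implies-compactness} to get that $(\Res(\mu,A)^m)^3=\Res(\mu,A)^{3m}$ is compact. Since $\Res(\mu,A)$ has a compact power, it is a Riesz operator: its non-zero spectrum consists of at most countably many isolated eigenvalues of finite algebraic multiplicity. Transferring back via the spectral mapping theorem $\lambda\mapsto 1/(\mu-\lambda)$, every spectral value of $A$ is isolated and a pole of $\Res(\argument,A)$ whose residuum -- being the spectral projection onto the corresponding generalized eigenspace -- has finite rank.

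For~(b), let $k$ denote the pole order of $\Res(\argument,A)$ at $\lambda_0$; from~(a) the generalized eigenspace $\Ima P$ is finite dimensional with $\dim\Ima P=k$. Because $\ker(\lambda_0-A)=\linSpan(v)$ is one-dimensional by the spectral assumption, $(A-\lambda_0)|_{\Ima P}$ is a single nilpotent Jordan block of size $k$. I would fix a Jordan basis $w_0,(A-\lambda_0)w_0,\dots,(A-\lambda_0)^{k-1}w_0=v$ of $\Ima P$, together with its dual basis $\psi_0^{\ast},\dots,\psi_{k-1}^{\ast}$ inside $\Ima P'$ under the (nondegenerate) restriction pairing. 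The transpose nilpotent $(A'-\lambda_0)|_{\Ima P'}$ has kernel $\linSpan(\psi_0^{\ast})$, and since every eigenvector of $A'$ at $\lambda_0$ lies in the generalized eigenspace $\Ima P'$, the hypothesis on $\psi$ forces $\psi=c\,\psi_0^{\ast}$ for some scalar $c$. Hence
\[
\duality{\psi}{v}=c\,\duality{\psi_0^{\ast}}{w_{k-1}}=c\,\delta_{0,k-1},
\]
which vanishes whenever $k\ge 2$. On the other hand, the remark after Setting~\ref{sett:general} makes $u$ a quasi-interior point, so strict positivity of $\varphi$ combined with $v\succeq u$ and $\psi\succeq\varphi$ gives $\duality{\psi}{v}>0$. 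The two statements are only compatible when $k=1$, and the same conclusion for $\Res(\argument,A')$ follows by duality, since the Laurent coefficients at $\lambda_0$ are exchanged by taking adjoints.

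Part~(c) is then a brief bookkeeping step: with $k=1$ we have $\Ima P=\linSpan(v)$, so $P=v\otimes\eta$ for a unique $\eta\in E'$, and $P^2=P$ together with $Pv=v$ forces $\duality{\eta}{v}=1$. Dually $\Ima P'=\linSpan(\eta)$ is itself the one-dimensional eigenspace of $A'$ at $\lambda_0$, which by assumption contains $\psi$; hence $\psi=\duality{\psi}{v}\,\eta$, yielding the claimed expression for the spectral projection, with $\duality{\psi}{v}>0$ already secured in~(b). I expect the main obstacle to be the Jordan-chain pairing calculation in~(b): recognising that $v$ sits at the ``bottom'' of the primal chain while the unique dual eigenvector $\psi_0^{\ast}$ sits at the ``top'' of the dual chain -- so their pairing is forced to vanish for $k\ge 2$ -- is the decisive step, and it is exactly where one uses the geometric simplicity of $\lambda_0$ on both the primal and the dual side in order to contradict the positivity estimate $\duality{\psi}{v}>0$.
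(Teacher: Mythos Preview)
Your proof is correct, and for parts~(a) and~(c) it is essentially the paper's argument. The genuine difference is in part~(b).

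The paper does not carry out the Jordan-chain computation. Instead, it observes that $v\succeq u$ makes $v$ a quasi-interior point and $\psi\succeq\varphi$ makes $\psi$ strictly positive, and then invokes \cite[Proposition~3.1]{DanersGlueckKennedy2016b}, which says precisely that a pole of the resolvent whose eigenspace contains a quasi-interior point and whose dual eigenspace contains a strictly positive functional must be of order~$1$. That external result is proved by looking at the leading Laurent coefficient $Q=\lim_{\mu\to\lambda_0}(\mu-\lambda_0)^k\Res(\mu,A)$: one shows $Qv$ and $Q'\psi$ are both nonzero if the pole order exceeds~$1$, which conflicts with the fact that $Q$ annihilates the eigenspace in that case. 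Your argument reaches the same contradiction by a different, purely linear-algebraic route: geometric simplicity forces a single Jordan block on $\Ima P$, and the pairing between the bottom of the primal chain ($v=w_{k-1}$) and the unique dual eigenvector ($\psi_0^\ast$) vanishes for $k\ge2$, contradicting $\duality{\psi}{v}>0$. Your approach is self-contained and uses nothing beyond finite-dimensional duality on $\Ima P$; the paper's approach is shorter once the cited lemma is granted and, notably, that lemma does \emph{not} need geometric simplicity of $\lambda_0$ to conclude pole order~$1$ (geometric simplicity is only used afterwards to pass from pole order~$1$ to algebraic simplicity). So your argument makes heavier use of one half of the spectral assumption, while the paper's route leans on the lattice positivity in a more essential way.

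One small point: in part~(a) you should note that $\Res(\mu,A)^m$ is real (since $A$ is real and $\mu\in\bbR$) before applying Corollary~\ref{cor:dom-implies-compactness}, which is stated for real operators.
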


\begin{proof}
	(a) 
	Fix $\mu \in \resSet(A)$. 
	If $\mu$ is real, the operator $\Res(\mu,A)^m$ is real, 
	so the compactness of $\Res(\mu,A)^{3m}$ is an immediate consequence 
	of Corollary~\ref{cor:two-sided-estimate-of-resolent-power-by-domination-assumption}
	and the compactness assertion in Proposition~\ref{cor:dom-implies-compactness}.
	The rest of the assertions are standard results of the Riesz--Schauder theory of compact operators (see \cite[Section~X.5]{Yosida1980} and \cite[Corollary~IV.1.19]{EngelNagel2000}).
	
	For the rest of the proof, we assume that the spectral assumption is satisfied as well. 
	
	(b) 
	The vector $v$ is a quasi-interior point since $v \succeq u$, 
	and the functional $\psi$ is strictly positive since $\psi \succeq \varphi$. 
	Due to these observations, the fact that the order of $\lambda_0$ as a pole of 
	$\Res(\argument,A)$ and $\Res(\argument,A')$ equals $1$ 
	can be found in~\cite[Proposition~3.1]{DanersGlueckKennedy2016b}.
	Since $\lambda_0$ is -- by the spectral assumption -- a geometrically simple eigenvalue of $A$,
	the result that the pole order is $1$ also implies that $\lambda_0$
	is an algebraically simple eigenvalue of both $A$ and $A'$.
	
	(c) 
	Due to the strict positivity of $\psi$ we have $\duality{\psi}{v} > 0$. 
	As $\lambda_0$ is a pole of the resolvent $\Res(\argument,A)$ of order equal to $1$, 
	the corresponding spectral projection $P$ satisfies $\Ima P=\ker(\lambda_0-A) = \linSpan\{v\}$ 
	and $\Ima P'=\ker(\lambda_0-A') = \linSpan\{\psi\}$. 
	This implies that $P=\frac{\psi \otimes v}{\duality{\psi}{v}}$.
\end{proof}

In many concrete situations, $A$ will be the generator of a $C_0$-semigroup 
which implies that $\lambda \Res(\lambda,A)$ is bounded as $\lambda \to \infty$. 
This latter property suffices to conclude that the resolvent is itself compact
if one of its powers is compact. 
In light of Proposition~\ref{prop:spectral-properties}(a), 
we find it worthwhile to include this fact here:

\begin{proposition}
	\label{prop:compact-resolvent-by-compact-power}
	Let $A: E \supseteq \dom{A} \to E$ be a bounded linear operator on a complex Banach space $E$. 
	Assume that all sufficiently large real numbers $\lambda$ 
	are contained in the resolvent set $\resSet(A)$ of $A$ and 
	that $\lambda \Res(\lambda,A)$ is bounded in the operator norm as $\lambda \to \infty$.
	
	If there exists a complex number $\mu_0 \in \resSet(A)$ 
	and an integer $k \ge 1$ such that the operator $\Res(\mu_0,A)^k: E \to E$ is compact, 
	then $R(\lambda,A)$ is compact for each $\lambda \in \resSet(A)$.
\end{proposition}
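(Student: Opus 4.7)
My plan is to proceed in two stages: first upgrade the compactness of $\Res(\mu_0,A)^k$ to compactness of $\Res(\lambda,A)^k$ for \emph{every} $\lambda\in\resSet(A)$, and then exhibit $\Res(\lambda,A)$ itself as an operator-norm limit of compact operators by combining a finite telescoping identity with the decay $\|\Res(\mu,A)\|=O(1/\mu)$ forced by the hypothesis $\|\mu\Res(\mu,A)\|\le C$.

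The first stage is a direct consequence of the resolvent equation: writing $\Res(\lambda,A)=\Res(\mu_0,A)\bigl(I+(\mu_0-\lambda)\Res(\lambda,A)\bigr)$ and using that $\Res(\mu_0,A)$ and $\Res(\lambda,A)$ commute, I raise both sides to the $k$-th power to obtain
\[
\Res(\lambda,A)^k \;=\; \Res(\mu_0,A)^k\bigl(I+(\mu_0-\lambda)\Res(\lambda,A)\bigr)^k,
\]
which is compact for every $\lambda\in\resSet(A)$ as a product of a compact with a bounded operator.

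For the second stage, I iterate the identity $\Res(\lambda,A)=\Res(\mu,A)+(\mu-\lambda)\Res(\mu,A)\Res(\lambda,A)$ exactly $k$ times to arrive at the finite, terminating telescoping identity
\[
\Res(\lambda,A) \;=\; \sum_{n=0}^{k-1}(\mu-\lambda)^n\,\Res(\mu,A)^{n+1} \;+\; (\mu-\lambda)^k\,\Res(\mu,A)^k\,\Res(\lambda,A),
\]
valid for all $\lambda,\mu\in\resSet(A)$. I now fix $\lambda$ and let $\mu\to\infty$ along the reals, which is legitimate by hypothesis. The rightmost term is compact by the first stage, and the bound $\|\Res(\mu,A)\|\le C/\mu$ gives
\[
\bigl\|(\mu-\lambda)^n\Res(\mu,A)^{n+1}\bigr\| \;\le\; C^{n+1}\,\frac{(\mu+|\lambda|)^n}{\mu^{n+1}} \;\longrightarrow\; 0 \qquad (\mu \to \infty)
\]
for each $n=0,\dots,k-1$, so the finite sum vanishes in operator norm. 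Hence $\Res(\lambda,A)$ is a norm-limit of compact operators and is therefore compact.

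The only delicate point is to expand in the \emph{correct} direction. A naive Neumann expansion of $\Res(\lambda,A)$ in powers of $\Res(\mu_0,A)$ has a fixed convergence radius $1/\|\Res(\mu_0,A)\|$ and so cannot reach arbitrary $\lambda\in\resSet(A)$; expanding instead with $\lambda$ fixed and $\mu\to\infty$ converts the hypothesis $\|\mu\Res(\mu,A)\|\le C$ into the extra factor $1/\mu$ that is exactly what is needed to kill the $k$ low-order terms of the telescoping identity in the limit.
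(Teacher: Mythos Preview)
Your proof is correct and follows essentially the same two-stage strategy as the paper: first transfer compactness of the $k$-th power from $\mu_0$ to all $\lambda\in\resSet(A)$ via the resolvent identity, then exhibit $\Res(\lambda,A)$ as an operator-norm limit of compact operators involving $\Res(\mu,A)^k$ as $\mu\to\infty$. The only technical difference is that the paper packages the second stage into a separate lemma showing $\bigl(\mu\Res(\mu,A)\bigr)^k\Res(\lambda,A)\to\Res(\lambda,A)$ by an inductive argument based directly on the resolvent equation, whereas you use the finite telescoping expansion (which the paper also records, as Lemma~\ref{lem:finite-expansion-of-resolvent}, but for a different purpose later on) to get the same conclusion with the slightly different approximant $(\mu-\lambda)^k\Res(\mu,A)^k\Res(\lambda,A)$; both routes are equally short and yield the same result.
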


While the proposition is certainly well-known to experts in operator theory,
we could not find it in precisely this form in the literature. 
Closely related statements, though, are shown in 
\cite[Lemma~3.4]{dePagter1989} and \cite[Proposition~4.9]{ArendtBukhvalov1994},
and the proof of Proposition~\ref{prop:compact-resolvent-by-compact-power} is very similar.
For the convenience of the reader, we include the details;
the essence of the argument is the following lemma.

\begin{lemma}
	\label{lem:compact-power-convergence}
	Under the assumptions of Proposition~\ref{prop:compact-resolvent-by-compact-power}, 
	for every $\lambda \in \resSet(A)$ we have 
	\[
		\big(\mu \Res(\mu,A)\big)^k \Res(\lambda,A) \to \Res(\lambda,A)
	\]
	with respect to the operator norm as $\bbR \ni \mu \to \infty$.
\end{lemma}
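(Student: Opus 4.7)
The plan is to write $T_\mu := \mu \Res(\mu,A)$, so that the task becomes showing $T_\mu^k \Res(\lambda,A) \to \Res(\lambda,A)$ in operator norm as $\bbR \ni \mu \to \infty$, and then to decompose the difference via a standard telescoping identity that pushes everything onto the case $k=1$. The case $k=1$ is in turn settled by a single application of the resolvent equation.

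For the telescoping step: since all resolvents of $A$ commute pairwise, so do the operators $T_\mu$, and we have the factorisation
\[
T_\mu^k - I = (T_\mu - I) \sum_{j=0}^{k-1} T_\mu^j.
\]
By the standing hypothesis, $\|T_\mu\|$ stays bounded as $\mu \to \infty$, so the sum on the right is uniformly bounded in operator norm in $\mu$. Consequently, it suffices to show that $(T_\mu - I)\Res(\lambda,A) \to 0$ in operator norm.

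For this single difference the resolvent identity $\Res(\mu,A)\Res(\lambda,A) = \bigl(\Res(\mu,A) - \Res(\lambda,A)\bigr)/(\lambda - \mu)$ leads, after a short calculation, to
\[
(T_\mu - I)\Res(\lambda,A) = \mu \Res(\mu,A)\Res(\lambda,A) - \Res(\lambda,A) = \frac{\mu \Res(\mu,A) - \lambda \Res(\lambda,A)}{\lambda - \mu}.
\]
Here the numerator stays bounded in operator norm (the first summand by hypothesis, the second because $\Res(\lambda,A)$ is fixed), while the denominator grows without bound as $\mu \to \infty$; hence the quotient tends to $0$ in operator norm. Combined with the previous step this establishes the lemma.

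There is no real obstacle in this argument -- it is essentially the resolvent equation paired with the boundedness assumption on $\mu \Res(\mu,A)$. It is perhaps worth remarking that the compactness hypothesis of Proposition~\ref{prop:compact-resolvent-by-compact-power} plays no role in the lemma itself; it enters only afterwards, when this operator-norm convergence is combined with the compactness of $\mu^k \Res(\mu,A)^k$ (which follows from that of $\Res(\mu_0,A)^k$ by a further resolvent-identity argument, since any two resolvent powers differ by a compact operator under our hypothesis) to identify $\Res(\lambda,A)$ as a norm-limit of compact operators.
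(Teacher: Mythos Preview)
Your proof is correct and uses essentially the same ingredients as the paper's: reduce to the case $k=1$ using the uniform boundedness of $T_\mu = \mu\Res(\mu,A)$, and settle that case by the resolvent equation. The only cosmetic difference is that the paper performs the reduction by induction on $k$ (writing $T_\mu^{k+1}\Res(\lambda,A) - \Res(\lambda,A) = T_\mu\bigl(T_\mu^k\Res(\lambda,A)-\Res(\lambda,A)\bigr) + \bigl(T_\mu\Res(\lambda,A)-\Res(\lambda,A)\bigr)$), whereas you use the one-shot factorisation $T_\mu^k - I = (T_\mu - I)\sum_{j=0}^{k-1}T_\mu^j$; your version is slightly more economical but the content is identical.
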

	
\begin{proof}
	Fix $\lambda \in \resSet(A)$.
	The resolvent equation yields
	\begin{align*}
		\mu \Res(\mu,A) \Res(\lambda,A) 
		= 
		\frac{\mu}{\mu - \lambda} \big( \Res(\lambda,A) - \Res(\mu,A)\big)
	\end{align*}
	for all $\mu \in \resSet(A)$.
	If $\mu$ is real and tends to $\infty$, this expression converges to $\Res(\lambda,A)$ 
	since $\mu \Res(\mu,A)$ stays bounded.
	
	Now we proceed by induction: 
	assume that we have already proved the claim for an integer $k \ge 1$.	
	For all $\mu \in \resSet(A)$ we can write the expression
	\begin{align*}
		\big(\mu \Res(\mu,A)\big)^{k+1} \Res(\lambda,A) - \Res(\lambda,A)
	\end{align*}
	as
	\begin{align*}
		\mu \Res(\mu,A) \left( \big(\mu \Res(\mu,A)\big)^k \Res(\lambda,A) - \Res(\lambda,A) \right) 
		+ 
		\Big( \mu \Res(\mu,A) \Res(\lambda,A) - \Res(\lambda,A) \Big).
	\end{align*}
	If $\mu$ is real and tends to $\infty$, the first summand converges to $0$
	by the induction hypothesis since $\mu \Res(\mu, A)$ remains bounded;
	the second summand also tends to $0$
	-- this is simply the case $k=1$ that we have already treated.
\end{proof}

\begin{proof}[Proof of Proposition~\ref{prop:compact-resolvent-by-compact-power}]
	For each $\mu \in \resSet(A)$, we obtain using the resolvent equation that
	\[
		\Res(\mu,A)^k = R(\mu_0,A)^k\left( \id + (\mu_0-\mu)\Res(\mu,A)\right)^k,
	\]
	which implies $\Res(\mu,A)^k$ is also compact. 
	So for a fixed $\lambda \in \resSet(A)$, Lemma~\ref{lem:compact-power-convergence} tells us that $\Res(\lambda,A)$ is the operator norm limit 
	of compact operators and is 
	hence compact.
\end{proof}

\section{Estimating the resolvent and its powers}
\label{section:bounds-of-resolvents}

Both the assumptions and the conclusions of our Theorem~\ref{thm:main} 
contain estimates for the resolvent $\Res(\argument,A)$ at certain points of $\resSet(A)$. 
In order to (i) be able to check that the assumptions are satisfied 
and (ii) derive consequences of the conclusions of the theorem, 
it is worthwhile to check how those estimates carry over from some points in $\resSet(A)$ 
to further points in $\resSet(A)$; this is the content of the results in this section.

First, we show that a certain two-sided estimate for the resolvent 
at a point is sufficient to obtain the same estimate 
for all real numbers in the resolvent set and for all powers of the resolvent.

\begin{proposition}
	\label{prop:extend-domination-two-sided}
	Let $A: E \supseteq \dom{A} \to E$ be a densely defined, 
	closed, and real linear operator on a complex Banach lattice $E$.
	Let $u \in E_+$ and let $\varphi \in E'$ be strictly positive.
	If there exists a real number $\mu_0 \in \resSet(A)$ such that
	\begin{align*}
		-u \otimes \varphi \preceq \Res(\mu_0,A) \preceq u \otimes \varphi,
	\end{align*}
	then we have
	\begin{align*}
		-u \otimes \varphi \preceq \Res(\mu,A)^n \preceq u \otimes \varphi
	\end{align*}
	for all real numbers $\mu \in \resSet(A)$ and all exponents $n \in \bbN$.
\end{proposition}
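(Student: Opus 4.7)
The plan is to reduce the claim to repeated applications of Proposition~\ref{prop:rank-1-domination-composition}, using the resolvent equation to rewrite $\Res(\mu,A)$ so that every term is sandwiched between two copies of $\Res(\mu_0,A)$. It is useful to record upfront that the class of bounded real operators $T$ satisfying $-u\otimes\varphi \preceq T \preceq u\otimes\varphi$ is a linear subspace of $\calL(E)$; this is immediate from characterization~\ref{prop:char-of-rank-1-domination:item:modulus-domination} in Proposition~\ref{prop:char-of-rank-1-domination}.

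First I would treat the case $n=1$ for a real $\mu \in \resSet(A)\setminus\{\mu_0\}$. The resolvent equation, in its two forms
\begin{align*}
	\Res(\mu,A) &= \Res(\mu_0,A) + (\mu_0-\mu)\,\Res(\mu,A)\,\Res(\mu_0,A),\\
	\Res(\mu,A) &= \Res(\mu_0,A) + (\mu_0-\mu)\,\Res(\mu_0,A)\,\Res(\mu,A),
\end{align*}
can be combined by substituting the second identity into the copy of $\Res(\mu,A)$ that sits in the middle of the first. This produces
\[
	\Res(\mu,A) = \Res(\mu_0,A) + (\mu_0-\mu)\,\Res(\mu_0,A)^2 + (\mu_0-\mu)^2\,\Res(\mu_0,A)\,\Res(\mu,A)\,\Res(\mu_0,A).
\]
Each summand now carries $\Res(\mu_0,A)$ as both its leftmost and its rightmost factor (with middle factor $\id$ for the second summand and $\Res(\mu,A)$ for the third, and trivially for the first), so Proposition~\ref{prop:rank-1-domination-composition} delivers the two-sided estimate for each individual summand. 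Summing and invoking linearity of the class yields the desired estimate for $\Res(\mu,A)$.

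For $n \ge 2$, I would simply write $\Res(\mu,A)^n = \Res(\mu,A)\cdot \Res(\mu,A)^{n-2} \cdot \Res(\mu,A)$ and apply Proposition~\ref{prop:rank-1-domination-composition} once more, this time with $T_1=T_2=\Res(\mu,A)$; realness of the middle factor is inherited from that of $\Res(\mu,A)$, which holds because both $\mu$ and $A$ are real. No serious obstacle is expected. The one technical point to watch is the choice of which resolvent identity is substituted into which: substituting a form into itself would leave a remainder such as $\Res(\mu_0,A)^2\,\Res(\mu,A)$ that lacks the right-hand $\Res(\mu_0,A)$-factor required to invoke Proposition~\ref{prop:rank-1-domination-composition}.
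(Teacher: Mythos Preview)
Your proof is correct and follows essentially the same route as the paper's: derive the identity
\[
	\Res(\mu,A) = \Res(\mu_0,A) + (\mu_0-\mu)\Res(\mu_0,A)^2 + (\mu_0-\mu)^2\Res(\mu_0,A)\Res(\mu,A)\Res(\mu_0,A),
\]
apply Proposition~\ref{prop:rank-1-domination-composition} to each summand, and then handle higher powers by a second application of the same proposition. The only cosmetic difference is that the paper obtains the displayed identity by citing Lemma~\ref{lem:finite-expansion-of-resolvent} with $n=2$ (and implicitly using that resolvents commute to place one copy of $\Res(\mu_0,A)$ on each side of the last term), whereas you obtain it by substituting one form of the resolvent equation into the other; both yield the same formula.
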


For the proof of the proposition, we will use the following 
well-known finite sum expansion of the resolvent. 
We state it explicitly here since we will need the same expansion several times later on 
in this and in the next section.

\begin{lemma}
	\label{lem:finite-expansion-of-resolvent}
	Let $A: E \supseteq \dom{A} \to E$ be a closed linear operator on a complex Banach space $E$
	and let $\mu_0, \mu$ be two complex numbers in the resolvent of $A$.
	Then we have
	\begin{align*}
		\Res(\mu,A)
		=
		\sum_{k=0}^{n-1}(\mu_0-\mu)^k \Res(\mu_0,A)^{k+1} + (\mu_0-\mu)^{n}\Res(\mu_0,A)^n \Res(\mu,A)
	\end{align*}
	for each integer $n \in \bbN_0$.
\end{lemma}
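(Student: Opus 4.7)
The plan is to prove the identity by induction on $n$, relying only on the standard resolvent equation as the inductive engine.

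First I would dispose of the base case. For $n=0$ the sum on the right is empty and the identity collapses to $\Res(\mu,A) = \Res(\mu_0,A)^0 \Res(\mu,A)$, which is trivial. Alternatively, one may take $n=1$ as the base case: there, the assertion reads
\[
	\Res(\mu,A) = \Res(\mu_0,A) + (\mu_0-\mu) \Res(\mu_0,A) \Res(\mu,A),
\]
which is just the usual resolvent equation $\Res(\mu,A) - \Res(\mu_0,A) = (\mu_0-\mu)\Res(\mu_0,A)\Res(\mu,A)$, valid since both $\mu$ and $\mu_0$ lie in $\resSet(A)$.

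For the inductive step, assume the formula holds for some $n \in \bbN_0$. In the last summand $(\mu_0-\mu)^n \Res(\mu_0,A)^n \Res(\mu,A)$ I would replace $\Res(\mu,A)$ by the right-hand side of the base-case identity, obtaining
\[
	(\mu_0-\mu)^n \Res(\mu_0,A)^{n+1} + (\mu_0-\mu)^{n+1} \Res(\mu_0,A)^{n+1} \Res(\mu,A).
\]
Combining the first of these two terms with the sum from the induction hypothesis extends the summation range from $k=0,\dots,n-1$ to $k=0,\dots,n$, while the second term takes the form required as the new remainder. This yields the formula at index $n+1$ and closes the induction.

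There is essentially no obstacle here: the only thing one must be mildly careful about is that $\Res(\mu_0,A)$ and $\Res(\mu,A)$ commute, which follows from the resolvent equation, so that rearranging powers of $\Res(\mu_0,A)$ with $\Res(\mu,A)$ in the inductive step is legitimate. No Banach lattice structure is used; the argument works for arbitrary closed operators on a complex Banach space, as the statement of the lemma already reflects.
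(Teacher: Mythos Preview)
Your proof is correct and follows exactly the approach the paper takes: the paper's proof consists of the single sentence ``This follows by applying the resolvent equation $n$ times,'' which is precisely your induction. One minor remark: commutativity of $\Res(\mu_0,A)$ and $\Res(\mu,A)$ is not actually needed in your inductive step, since all factors of $\Res(\mu_0,A)$ already appear to the left of $\Res(\mu,A)$.
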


\begin{proof}
	This follows by applying the resolvent equation
	\begin{align*}
		\Res(\mu,A) = \Res(\mu_0,A) + (\mu_0-\mu)\Res(\mu_0,A)\Res(\mu,A)
	\end{align*}
	$n$ times.
\end{proof}

\begin{proof}[Proof of Proposition~\ref{prop:extend-domination-two-sided}]
	Fix a real number $\mu \in \resSet(A)$. 
	We apply Lemma~\ref{lem:finite-expansion-of-resolvent} for $n=2$, which gives
	\begin{align*}
		\Res(\mu,A)
		& = 
		\Res(\mu_0,A) + (\mu_0-\mu)\Res(\mu_0, A)^2 + (\mu_0-\mu)^2\Res(\mu_0, A)\Res(\mu,A)\Res(\mu_0, A).
	\end{align*}
	By assumption, the operator $\Res(\mu_0,A)$ satisfies 
	the equivalent assertions of Proposition~\ref{prop:char-of-rank-1-domination}, 
	and hence, so do $\Res(\mu_0, A)^2$ and $\Res(\mu_0, A)\Res(\mu,A)\Res(\mu_0, A)$ 
	according to Proposition~\ref{prop:rank-1-domination-composition}.
	
	So we conclude from the above formula for $\Res(\mu,A)$ that this operator also satisfies 
	the equivalent assertions of Proposition~\ref{prop:char-of-rank-1-domination}.
	Once again applying Proposition~\ref{prop:rank-1-domination-composition},
	 we conclude that all powers of $\Res(\mu,A)$ must also satisfy the claimed estimate.
\end{proof}

Let us show how Propositions~\ref{prop:extend-domination-two-sided}
and~\ref{prop:domination-imlies-estimate-for-product} can be combined to obtain 
a two-sided resolvent estimate for self-adjoint operators, 
provided that their corresponding form domain is contained in a principal ideal:

\begin{proposition}
	\label{prop:form-domain-estimates}
	Let $(\Omega,\mu)$ be a 
	$\sigma$-finite measure space and $E=L^2(\Omega,\mu)$. 
	Let $A: E \supseteq \dom{A}\to E$ be a self-adjoint and real operator 
	whose spectral bound $\spb(A)$ satisfies $\spb(A) < \infty$ and which is
	associated with a symmetric bilinear form ${a: \dom{a} \times \dom{a} \to \bbC}$,
	whose form domain we denote by $\dom{a} \subseteq E$.
	
	If $0 \le u \in E$ is a function that is strictly positive almost everywhere
	and the form domain satisfies $\dom{a} \subseteq E_u$, then 
	\[
		-u \otimes u \preceq \Res(\mu,A) \preceq u\otimes u.
	\]
	for each real number $\mu$ in the resolvent set of $A$.
\end{proposition}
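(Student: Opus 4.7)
The plan is to reduce, via Proposition~\ref{prop:extend-domination-two-sided}, to establishing the two-sided estimate $-u \otimes u \preceq \Res(\mu_0, A) \preceq u \otimes u$ at a single real point $\mu_0 \in \resSet(A)$. That single-point estimate will in turn be obtained from Proposition~\ref{prop:domination-imlies-estimate-for-product} by factoring $\Res(\mu_0, A)$ through the self-adjoint square root of $\mu_0 - A$.

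Concretely, I would pick a real $\mu_0 > \spb(A)$, which exists because $\spb(A) < \infty$ and self-adjointness forces $\spec(A) \subseteq \bbR$. Then $\mu_0 - A$ is a strictly positive self-adjoint operator, and the Borel functional calculus yields a positive self-adjoint square root $B := (\mu_0 - A)^{1/2}$ whose bounded inverse satisfies $B^{-1} = \Res(\mu_0, A)^{1/2}$. The standard representation theorem for closed semibounded forms identifies $\dom{B}$ with the form domain $\dom{a}$, so the assumption $\dom{a} \subseteq E_u$ gives $\Ima(B^{-1}) = \dom{B} \subseteq E_u$.

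Now identify the function $u$ with the strictly positive functional $\varphi \in E'$ given by $\varphi(f) = \int_\Omega u f \, d\mu$; under the $L^2$-duality, $(E')_\varphi$ coincides with $E_u$. Since $A$ is real and self-adjoint, the same is true of $B^{-1}$, so $(B^{-1})' = B^{-1}$ after this identification. Taking $T_1 = T_3 = B^{-1}$ and $T_2 = \id$ in Proposition~\ref{prop:domination-imlies-estimate-for-product}, both required conditions $T_1' E' \subseteq (E')_\varphi$ and $T_3 E \subseteq E_u$ collapse to the single inclusion $B^{-1} E \subseteq E_u$, which we already have. The proposition then shows that $\Res(\mu_0, A) = T_3 T_2 T_1$ satisfies the equivalent conditions of Proposition~\ref{prop:char-of-rank-1-domination}, i.e.\ the asserted two-sided estimate at $\mu_0$. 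Proposition~\ref{prop:extend-domination-two-sided} finally transfers this bound to every real $\mu \in \resSet(A)$.

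The main obstacle I anticipate is a soft one: the identification $\dom{(\mu_0-A)^{1/2}} = \dom{a}$ is classical but relies on $a$ being the closed semibounded sesquilinear form canonically associated with $A$ in the sense of the representation theorem. Since the statement takes this association as part of the hypothesis (``associated with a symmetric bilinear form''), no genuine analytic difficulty arises; the remaining work is the bookkeeping that matches the $L^2$-duality identification of $u$ with a functional to the lattice-ideal framework in which Propositions~\ref{prop:char-of-rank-1-domination}, \ref{prop:domination-imlies-estimate-for-product}, and~\ref{prop:extend-domination-two-sided} are phrased.
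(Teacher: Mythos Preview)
Your proposal is correct and follows essentially the same route as the paper: reduce to a single $\mu_0 > \spb(A)$ via Proposition~\ref{prop:extend-domination-two-sided}, factor $\Res(\mu_0,A)$ as the square of its self-adjoint square root, invoke the form-domain identification $\Ima \Res(\mu_0,A)^{1/2} = \dom{a} \subseteq E_u$, and conclude by Proposition~\ref{prop:domination-imlies-estimate-for-product} with $\varphi = u$. The paper cites \cite[Theorem~8.1]{Ouhabaz2005} for the form-domain identification you flagged as the only potential obstacle.
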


\begin{proof}
	By Proposition~\ref{prop:extend-domination-two-sided},
	it suffices to prove the claim for an arbitrary real number $\mu > \spb(A)$,
	so fix such a number.
	The resolvent $\Res(\mu,A)$ is a positive-definite self-adjoint operator 
	and therefore has a unique positive definite square root $\Res(\mu,A)^{1/2}$. 
	Self-adjointness together with \cite[Theorem~8.1]{Ouhabaz2005} implies that 
	\[
		\Res(\mu,A)^{1/2} E = \dom{a} \subseteq E_u.
	\]
	Since the square root $\Res(\mu,A)^{1/2}$ is also self-adjoint,
	the operator 
	\[
		\Res(\mu,A) = \Res(\mu,A)^{1/2} \Res(\mu,A)^{1/2}
	\]
	is sandwiched between multiplies of $-u \otimes u$ and $u \otimes u$ 
	because of Proposition~\ref{prop:domination-imlies-estimate-for-product}
	(applied to $\varphi = u$).
\end{proof}

According to Proposition~\ref{prop:extend-domination-two-sided}, 
a two-sided estimate of the resolvent carries over 
from one point in $\resSet(A) \cap \bbR$ to all points in this set
and to all powers of the resolvent. 
The situation becomes more subtle if we only consider one-sided estimates. 
The estimate still carries over to the powers of the resolvent,
provided that the two conditions specified within Setting~\ref{sett:general} are satisfied;
this is the content of the following Proposition~\ref{prop:domination-powers-one-sided}. 
In the subsequent Theorem~\ref{thm:domination-interval-one-sided},
we shall see that, under the same conditions, the bounds also carries over to other values of $\mu$; 
however, we are only able to prove this for $\mu$ on one side of $\mu_0$.

\begin{proposition}
	\label{prop:domination-powers-one-sided}
	Assume that both the domination and the spectral condition are satisfied in Setting~\ref{sett:general}. 
	Let $\mu_0$ be a real number in $\resSet(A)$.
	\begin{enumerate}[\upshape (a)]
		\item 
		If $\Res(\mu_0,A) \succeq -u \otimes \varphi$, then
		$
			\Res(\mu_0,A)^n \succeq -u \otimes \varphi
		$
		for all $n \in \bbN$.
		
		\item 
		If $\Res(\mu_0,A) \preceq u \otimes \varphi$, then
		$
			(-1)^{n-1} \Res(\mu_0,A)^n \preceq u \otimes \varphi
		$
		for all $n \in \bbN$.
	\end{enumerate}
\end{proposition}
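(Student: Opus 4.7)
By the invariance of Setting~\ref{sett:general} under $A \mapsto -A$, $\lambda_0 \mapsto -\lambda_0$, together with the identity $\Res(-\mu_0, -A) = -\Res(\mu_0, A)$, applying (a) to the operator $-A$ with the spectral point $-\mu_0$ turns the hypothesis of (b) into the hypothesis of (a) and produces exactly the estimate claimed in (b). So I would only prove~(a).

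To prove~(a), set $T := \Res(\mu_0, A)$ and $a := 1/(\mu_0 - \lambda_0) \ne 0$, and assume $T \ge -c\, u \otimes \varphi$ for some $c > 0$. Let $P$ be the spectral projection of $A$ associated with $\lambda_0$ from Proposition~\ref{prop:spectral-properties}: it is a rank-one positive operator, commutes with $T$, and satisfies $P^2 = P$ together with $T^{j}P = a^{j}P$ for every $j \ge 0$. The main preparatory step is to verify
\[
    \alpha\, u \otimes \varphi \;\le\; P \;\le\; \beta\, u \otimes \varphi \qquad \text{for some } \alpha, \beta > 0.
\]
The lower inequality follows directly from the spectral assumption $v \succeq u$, $\psi \succeq \varphi$; for the upper inequality one uses that eigenvectors of $A$ (respectively $A'$) automatically lie in every $\dom{A^k}$ (respectively $\dom{(A')^k}$), hence in $\dom{A^{m_1}} \subseteq E_u$ (respectively $\dom{(A')^{m_2}} \subseteq (E')_\varphi$), which forces $v \preceq u$ and $\psi \preceq \varphi$.

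Now I would pick $\gamma \ge c/\alpha$, so that $\gamma P \ge c\, u \otimes \varphi$ and thus $\hat T := T + \gamma P \ge 0$. Since $T$ and $P$ commute and $T^{n-k}P = a^{n-k}P$ for every $k \in \{0,\ldots,n\}$, the binomial theorem collapses to
\[
    \hat T^n = (T + \gamma P)^n = T^n + \bigl((a+\gamma)^n - a^n\bigr) P.
\]
As $\hat T \ge 0$ implies $\hat T^n \ge 0$, rearranging and using the upper bound $P \le \beta\, u \otimes \varphi$ yields $T^n \ge -\bigl|(a+\gamma)^n - a^n\bigr|\, \beta\, u \otimes \varphi$, which is precisely $T^n \succeq -u \otimes \varphi$ (the $n$-dependence of the constant is harmless since the conclusion is stated per $n$).

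The main conceptual obstacle -- and the reason a direct expansion of $(T + c\, u \otimes \varphi)^n$ would fail -- is recognizing that one should perturb $T$ by a multiple of the \emph{spectral projection} $P$ rather than by $c\, u \otimes \varphi$. Both perturbations turn $T$ into a positive operator, but only $P$ commutes with $T$; the binomial expansion then reduces to a single rank-one correction that can be absorbed using $P \preceq u \otimes \varphi$. The naive expansion of $(T + c\, u \otimes \varphi)^n$ produces non-commuting cross terms whose control would require the domination assumption for exponents strictly smaller than $m_1$ or $m_2$, which is not available in general.
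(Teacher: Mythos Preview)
Your proof is correct and uses essentially the same idea as the paper: perturb the resolvent by a suitable multiple of the spectral projection $P$, expand via the binomial theorem using $TP = aP$ and $P^2 = P$, and then convert the resulting $P$-bound back into a $u\otimes\varphi$-bound via $P \preceq u\otimes\varphi$. Your version is in fact slightly cleaner than the paper's: instead of normalizing to $\mu_0\Res(\mu_0,A)$ (which forces a case distinction on the sign of $\mu_0-\lambda_0$ and, in the second case, a choice $c>2$ to keep the constant positive), you work directly with $T=\Res(\mu_0,A)$ and the eigenvalue $a=1/(\mu_0-\lambda_0)$, so the sign is absorbed into the single constant $(a+\gamma)^n-a^n$ and no case split is needed.
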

\begin{proof}
	There is no loss of generality in assuming that the eigenvalue $\lambda_0$ equals $0$.
	Moreover, it suffices to prove~(a); assertion~(b) then follows from~(a)
	by replacing $A$ with $-A$ and $\mu_0$ with $-\mu_0$.
	
	Observe that, as $v$ is an eigenvector of $A$ corresponding to $0$, 
	therefore, it is also an eigenvector of $A^{m_1}$ corresponding to the eigenvalue $0$. 
	In particular we have $v\in \dom{A^{m_1}}\subseteq E_u$ which
	together with the spectral condition implies that ${v \preceq u \preceq v}$. 
	Similarly, $\psi \preceq \varphi \preceq \psi$. 
	
	We may rescale $\psi$ and $v$ such that $\duality{\psi}{v} = 1$. 
	By Proposition~\ref{prop:spectral-properties}, 
	the eigenvalue $0$ is a pole of the resolvent $\Res(\argument,A)$ 
	and the associated spectral projection is given by $P:=v\otimes \psi$. 
	Thus, from our observation, we have $P \preceq u \otimes \varphi \preceq P$,
	so $u \otimes \varphi$ can be replaced with $P$ in all estimates 
	that we assume or that we want to show.
	In particular, the assumption in~(a) can be restated as $\Res(\mu_0,A) \succeq -P$.
	
	Now we distinguish the following two cases:	
	
	\emph{First case: $\mu_0 > 0$.}
	Then there exists a number $c \in (0,\infty)$ such that we have $\mu_0\Res(\mu_0,A) \ge -cP$.
	Hence, for $n \ge 1$,
	\begin{align*}
		0 
		\le 
		\big( \mu_0 \Res(\mu_0,A) + c P \big)^n
		& =
		\mu_0^n \Res(\mu_0, A)^n
		+
		\sum_{k=0}^{n-1} \binom{n}{k} \mu_0^k \Res(\mu_0,A)^k c^{n-k}P 
		\\
		& =
		\mu_0^n \Res(\mu_0, A)^n
		+
		\sum_{k=0}^{n-1} \binom{n}{k} c^{n-k} P;
	\end{align*}
	for the second equality we used that $\mu_0 \Res(\mu_0, A) P = P$.
	It now follows that, indeed, $\Res(\mu_0, A)^n \succeq -P$.
	
	\emph{Second case: $\mu_0 < 0$.}
	Then there exists a number $c \in (0,\infty)$ such that we have $\mu_0\Res(\mu_0,A) \le cP$.
	Moreover, we can -- and will -- choose $c$ to be strictly larger than $2$.
	Then, for $n \ge 1$,
	\begin{align*}
		0 \le 
		\big( cP - \mu_0 \Res(\mu_0, A) \big)^n
		& = 
		(-\mu_0)^n \Res(\mu_0, A)^n
		+
		\sum_{k=0}^{n-1} \binom{n}{k} (-\mu_0)^k \Res(\mu_0,A)^k c^{n-k}P.
	\end{align*}
	Again using $\mu_0 \Res(\mu_0, A) P = P$, the sum on the right simplifies to
	\begin{align*}
		\sum_{k=0}^{n-1} \binom{n}{k} (-1)^k c^{n-k} P
		& =
		(c-1)^n P - (-1)^n P
		=
		c' P,
	\end{align*}
	where the number $c' := (c-1)^n - (-1)^n$ is strictly positive since $c > 2$.
	Thus, we have shown that
	\begin{align*}
		-c' P \le (-\mu_0)^n \Res(\mu_0, A)^n,
	\end{align*}
	which proves the claim as $-\mu_0 > 0$.
\end{proof}

In the next theorem we show that 
the lower estimate $\Res(\mu_0,A) \succeq -u \otimes \varphi$
extends to all $\mu$ on the left of $\mu_0$, 
while the upper estimate $\Res(\mu_0,A) \preceq u \otimes \varphi$
extends to all $\mu$ on the right of $\mu_0$.

\begin{theorem}
	\label{thm:domination-interval-one-sided}
	Assume that both the domination and the spectral condition are satisfied in Setting~\ref{sett:general}. 
	Let $\mu_0, \mu$ be real numbers in $\resSet(A)$.
	\begin{enumerate}[\upshape (a)]
		\item 
		If $\Res(\mu_0,A) \succeq -u \otimes \varphi$ and $\mu\le \mu_0$, then
		\[
			\Res(\mu,A) \succeq -u \otimes \varphi 
		\]
		(and hence even $\Res(\mu,A)^n \succeq -u \otimes \varphi$ for all all $n\in \bbN$).
		
		\item 
		If $\Res(\mu_0,A) \preceq u \otimes \varphi$ and $\mu \ge \mu_0$, then
		\[
			\Res(\mu,A) \preceq u \otimes \varphi
		\]
		(and hence even $(-1)^{n-1} \Res(\mu,A)^n \preceq u \otimes \varphi$ for all $n \in \bbN$).
	\end{enumerate}
\end{theorem}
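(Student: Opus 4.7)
The plan is to derive part (b) from part (a) via the involution $A \leftrightarrow -A$, $\mu_0 \leftrightarrow -\mu_0$, $\mu \leftrightarrow -\mu$, $\lambda_0 \leftrightarrow -\lambda_0$. Both the domination and spectral assumptions of Setting~\ref{sett:general} are invariant under this change, and the identity $\Res(\nu, -A) = -\Res(-\nu, A)$ turns the hypothesis $\Res(\mu_0, A) \preceq u \otimes \varphi$ with $\mu \ge \mu_0$ into $\Res(-\mu_0, -A) \succeq -u \otimes \varphi$ with $-\mu \le -\mu_0$; the corresponding conclusion of part (a) then translates back to $\Res(\mu, A) \preceq u \otimes \varphi$. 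So the real work is in part (a).

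For part~(a), after subtracting $\lambda_0$ from $A$ one may assume $\lambda_0 = 0$. The starting point is Proposition~\ref{prop:domination-powers-one-sided}(a), which upgrades the hypothesis to $\Res(\mu_0, A)^{k} \succeq -u\otimes\varphi$ for every $k \ge 1$. Next, combining the domination assumption with Proposition~\ref{prop:domination-imlies-estimate-for-product} and using that resolvents commute among themselves, I get that any product of resolvents in which at least $m_1$ factors of $\Res(\mu_0, A)$ can be shifted to the far left and $m_2$ factors of $\Res(\mu_0, A)$ to the far right satisfies the \emph{two-sided} estimate $-u \otimes \varphi \preceq \cdot \preceq u \otimes \varphi$. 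In particular, $\Res(\mu_0, A)^{k}$ for every $k \ge m$, and also $\Res(\mu_0, A)^n \Res(\mu, A)$ for every $n \ge m$, lie in this two-sided class (the latter by writing it as $\Res(\mu_0, A)^{m_1} \cdot \Res(\mu_0, A)^{n-m}\Res(\mu, A) \cdot \Res(\mu_0, A)^{m_2}$).

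With these facts at hand, I apply the finite resolvent expansion of Lemma~\ref{lem:finite-expansion-of-resolvent} for any chosen $n \ge \max(m,1)$:
\begin{align*}
	\Res(\mu, A) = \sum_{k=0}^{n-1} (\mu_0-\mu)^k \Res(\mu_0, A)^{k+1} + (\mu_0-\mu)^n \Res(\mu_0, A)^n \Res(\mu, A).
\end{align*}
The decisive point is that $\mu \le \mu_0$ makes every scalar $(\mu_0-\mu)^k$ non-negative. For indices $k$ with $k+1 < m$, Proposition~\ref{prop:domination-powers-one-sided}(a) gives the one-sided bound $\Res(\mu_0, A)^{k+1} \succeq -u\otimes\varphi$, and multiplication by the non-negative scalar $(\mu_0-\mu)^k$ preserves this inequality. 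For $k+1 \ge m$ the term even satisfies a two-sided estimate by the previous paragraph, and the remainder term does so as well; either way each summand is $\succeq -u\otimes\varphi$. Adding the finitely many lower bounds yields $\Res(\mu, A) \succeq -u\otimes\varphi$ (the constant implicit in $\succeq$ may depend on $\mu$, which is harmless). The parenthetical $\Res(\mu, A)^n \succeq -u\otimes\varphi$ for all $n \in \bbN$ then follows by applying Proposition~\ref{prop:domination-powers-one-sided}(a) once more, now with $\mu$ in place of $\mu_0$.

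The main delicacy is purely the sign bookkeeping: one has to match the signs of the powers $(\mu_0 - \mu)^k$ appearing in Lemma~\ref{lem:finite-expansion-of-resolvent} with the sign pattern of the one-sided estimate in Proposition~\ref{prop:domination-powers-one-sided}. This is why it pays to isolate the case $\mu \le \mu_0$ and transfer the other case through the $A \mapsto -A$ symmetry, since then all scalars in the expansion are non-negative and nothing can flip the direction of the one-sided inequalities.
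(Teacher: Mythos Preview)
Your proof is correct and follows essentially the same route as the paper: reduce (b) to (a) by the sign flip $A \mapsto -A$, expand $\Res(\mu,A)$ via Lemma~\ref{lem:finite-expansion-of-resolvent}, use Proposition~\ref{prop:domination-powers-one-sided}(a) for the summands and Proposition~\ref{prop:domination-imlies-estimate-for-product} (via the factorization $\Res(\mu_0,A)^{m_1}\cdot[\ldots]\cdot\Res(\mu_0,A)^{m_2}$) for the remainder. The only cosmetic differences are that the paper simply takes $n=m$ in the expansion and does not bother to reduce to $\lambda_0=0$ or to split the sum into the ranges $k+1<m$ and $k+1\ge m$ (the one-sided bound from Proposition~\ref{prop:domination-powers-one-sided}(a) already covers all $k$, so the two-sided estimate for large $k$ is not needed).
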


\begin{proof}
	We only need to prove the result for $n=1$, since the estimates for the higher powers 
	then follow immediately from Proposition~\ref{prop:domination-powers-one-sided}.
	Moreover, it is sufficient to prove~(a); assertion~(b) will follow by a sign flip again.
	
	Let $\mu$ be real number in $\resSet(A)$ such that $\mu\le \mu_0$.
	Because of the domination condition, we have both ${\Res(\mu_0,A')^{m_2} E'\subseteq (E')_{\varphi}}$
	and $\Res(\mu_0,A)^{m_1} E \subseteq E_u$.
	Hence, Proposition~\ref{prop:domination-imlies-estimate-for-product} shows that the operator
	\[
		\Res(\mu_0,A)^{m}\Res(\mu,A)
		=
		\Res(\mu_0,A)^{m_1}\Res(\mu,A)\Res(\mu_0,A)^{m_2}
	\]
	satisfies the equivalent assertions of Proposition~\ref{prop:char-of-rank-1-domination}, 
	i.e., we have
	\[
		-u \otimes \varphi \preceq \Res(\mu_0,A)^{m}\Res(\mu,A)\preceq u \otimes \varphi.
	\]
	Now we use the finite expansion of the resolvent 
	from Lemma~\ref{lem:finite-expansion-of-resolvent},
	namely
	\begin{align*}
		\Res(\mu,A)
		& =
		\sum_{k=0}^{m-1}(\mu_0-\mu)^k \Res(\mu_0,A)^{k+1} + (\mu_0-\mu)^{m}\Res(\mu_0,A)^{m}\Res(\mu,A).
	\end{align*}
	As $\mu \le \mu_0$, all operators under the sum on the left 
	dominate a negative multiple of $u \otimes \varphi$ 
	according to Proposition~\ref{prop:domination-powers-one-sided}(a);
	and the summand on the right does the same as shown in the previous paragraph.
\end{proof}

Let us sum up what the main idea in the previous proof was:
in the finite expansion formula for $\Res(\mu,A)$ from Lemma~\ref{lem:finite-expansion-of-resolvent},
the number $\mu$ occurs within a resolvent only in the expression $\Res(\mu_0,A)^{m}\Res(\mu,A)$;
but due to our domination assumption, this product 
is bounded below by a negative multiple of $u \otimes \varphi$.

\begin{remark}
	One can, for instance, use Theorem~\ref{thm:domination-interval-one-sided}(b) 
	to show that certain operators cannot satisfy the uniform anti-maximum principle 
	in the conclusion of Theorem~\ref{thm:main}(b):
	If an operator $A$ satisfies this uniform anti-maximum principle, 
	then we have $\Res(\mu,A) \preceq -u \otimes \varphi \preceq u \otimes \varphi$ 
	for some $\mu$ in a left neighbourhood of $\lambda_0$; 
	hence, it follows from Theorem~\ref{thm:domination-interval-one-sided}(b) 
	that actually $\Res(\mu,A) \preceq u \otimes \varphi$ for all sufficiently large $\mu \in \bbR$. 
	Thus, operators that do not satisfy the latter inequality for all large $\mu$ 
	cannot satisfy a uniform anti-maximum principle.
	
	In Theorem~\ref{thm:anti-max-characterization} we will use such an argument
	to derive a characterization of the anti-maximum principle.
\end{remark}

The situation becomes a bit simpler than in Theorem~\ref{thm:domination-interval-one-sided} 
if the domination condition in Setting~\ref{sett:general} is satisfied for the exponents ${m_1 = m_2 = 1}$:

\begin{proposition}
	\label{prop:extend-domination-exponent-one}
	Assume that the domination condition in Setting~\ref{sett:general} is satisfied for ${m_1 = m_2 = 1}$. 
	Let $\mu_0$ be a real number in $\resSet(A)$.
	\begin{enumerate}[\upshape (a)]
		\item 
		We have $\Res(\mu_0,A) \succeq -u \otimes \varphi$ if and only if 
		$\Res(\mu,A) \succeq -u \otimes \varphi$ for all real numbers $\mu$ in the resolvent set of $A$.
		
		\item 
		We have $\Res(\mu_0,A) \preceq u \otimes \varphi$ if and only if 
		$\Res(\mu,A) \preceq u \otimes \varphi$ for all real numbers $\mu$ in the resolvent set of $A$.
	\end{enumerate}
\end{proposition}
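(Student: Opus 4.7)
The ``if'' direction of both (a) and (b) is immediate, so the task is to deduce the bound at every $\mu \in \resSet(A) \cap \bbR$ from the bound at the single point $\mu_0$. The plan is to apply Lemma~\ref{lem:finite-expansion-of-resolvent} with $n = 2$ and, using that the resolvents at $\mu_0$ and $\mu$ commute, to rewrite the expansion in the form
\[
	\Res(\mu, A)
	=
	\Res(\mu_0, A)
	+ (\mu_0 - \mu) \Res(\mu_0, A)^2
	+ (\mu_0 - \mu)^2 \, \Res(\mu_0, A) \, \Res(\mu, A) \, \Res(\mu_0, A),
\]
in which the remainder term places a copy of $\Res(\mu_0, A)$ on \emph{both} sides of $\Res(\mu, A)$.

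Under the hypothesis $\Res(\mu_0, A) \succeq -u \otimes \varphi$, I would then analyze each summand separately. The first is $\succeq -u \otimes \varphi$ by assumption. For the second and third, the domination assumption with $m_1 = m_2 = 1$ translates into $\Res(\mu_0, A) E \subseteq \dom{A} \subseteq E_u$ and $\Res(\mu_0, A') E' \subseteq \dom{A'} \subseteq (E')_\varphi$; so Proposition~\ref{prop:domination-imlies-estimate-for-product}, applied with $T_1 = T_3 = \Res(\mu_0, A)$ and $T_2 = \id$ respectively $T_2 = \Res(\mu, A)$, supplies the two-sided estimates
\[
	-u \otimes \varphi \preceq \Res(\mu_0, A)^2 \preceq u \otimes \varphi, \qquad -u \otimes \varphi \preceq \Res(\mu_0, A) \Res(\mu, A) \Res(\mu_0, A) \preceq u \otimes \varphi.
\]
Multiplying by the scalar $(\mu_0 - \mu)$, respectively by the nonnegative scalar $(\mu_0 - \mu)^2$, preserves these two-sided bounds; summing the three summands yields $\Res(\mu, A) \succeq -u \otimes \varphi$, as required.

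Part (b) follows from part (a) applied to $-A$ in place of $A$ and $-\mu_0$ in place of $\mu_0$, since this sign flip exchanges upper and lower bounds and leaves the domination assumption in Setting~\ref{sett:general} invariant. The conceptual reason this stronger (two-sided) conclusion is available here, compared to the one-sided Theorem~\ref{thm:domination-interval-one-sided}, is precisely the \emph{symmetric} sandwich $\Res(\mu_0, A) \Res(\mu, A) \Res(\mu_0, A)$ that appears in the remainder term when $m_1 = m_2 = 1$: Proposition~\ref{prop:domination-imlies-estimate-for-product} then yields a genuine two-sided bound for this product, so no sign restriction on $\mu_0 - \mu$ is needed. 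I do not anticipate any serious obstacle beyond careful bookkeeping of which summand obeys which bound.
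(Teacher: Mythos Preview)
Your argument is correct and follows the same overall strategy as the paper: expand the resolvent via Lemma~\ref{lem:finite-expansion-of-resolvent} and observe that, under the domination assumption with $m_1=m_2=1$, the remainder term enjoys a two-sided bound by $u\otimes\varphi$, so the sign of $\mu_0-\mu$ is irrelevant.

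The paper's proof is a touch shorter, though: it uses the expansion with $n=1$ (i.e., the bare resolvent equation)
\[
	\Res(\mu,A)=\Res(\mu_0,A) + (\mu_0-\mu)\,\Res(\mu,A)\Res(\mu_0,A),
\]
and invokes Corollary~\ref{cor:two-sided-estimate-of-resolent-power-by-domination-assumption} directly to obtain the two-sided bound for the \emph{two}-factor product $\Res(\mu,A)\Res(\mu_0,A)$. Your explanation that the ``symmetric sandwich'' $\Res(\mu_0,A)\Res(\mu,A)\Res(\mu_0,A)$ is the conceptual key slightly overstates what is needed: when $m_1=m_2=1$, a \emph{single} resolvent $\Res(\nu,A)$ at any real $\nu\in\resSet(A)$ already satisfies both range conditions in Proposition~\ref{prop:domination-imlies-estimate-for-product} (namely $\Res(\nu,A)E\subseteq E_u$ and $\Res(\nu,A)'E'\subseteq (E')_\varphi$), so any product of two such resolvents---in either order---has the two-sided bound. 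Hence your detour through $n=2$ and the three-factor product, while perfectly valid, is not required.
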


\begin{proof}
	Let $\mu\in \resSet(A)\cap \mathbb{R}$. 
	Corollary~\ref{cor:two-sided-estimate-of-resolent-power-by-domination-assumption}
	gives us $-u \otimes \varphi \preceq \Res(\mu,A)\Res(\mu_0,A)\preceq u \otimes \varphi$.  
	Therefore, the resolvent equation
	\[
		\Res(\mu,A)=\Res(\mu_0,A) + (\mu_0-\mu)\Res(\mu,A)\Res(\mu_0,A).
	\]
	immediately implies the proposition.
\end{proof}

Thus, if we want to derive a one-sided estimate for $\Res(\mu,A)$ 
from a one-sided estimate for $\Res(\mu_0,A)$,
Proposition~\ref{prop:extend-domination-exponent-one} tells us that, if $m_1 = m_2 = 1$, then
we do not need to care whether $\mu \le \mu_0$ or $\mu \ge \mu_0$.
For the more general $m_1$ and $m_2$, we do not know whether
the conditions $\mu \le \mu_0$ and $\mu \ge \mu_0$
in Theorem~\ref{thm:domination-interval-one-sided}(a) and~(b), respectively, can be dropped.

\section{Eventual positivity of resolvents and their powers}
\label{section:eventual-positivity}

In this section, we finally come back to sufficient criteria
-- and characterizations -- of uniform (anti-)maximum principles. 
Our goals in this section are to finally prove Theorem~\ref{thm:main}, 
to give a similar result on powers of the resolvent (Theorem~\ref{thm:ev-pos-powers}) albeit with less restrictive assumptions,
and to prove a characterization of the uniform anti-maximum principle
under appropriate additional assumptions
(Theorem~\ref{thm:anti-max-characterization} and Corollary~\ref{cor:anti-max-characterization}).

The following resolvent estimate, which is a consequence of our spectral and domination assumption,
will be very useful.

\begin{lemma}
	\label{lem:resolvent-convergence}
	Let both the domination and the spectral assumption be satisfied in Setting~\ref{sett:general}
	and assume that the number $m = m_1 + m_2$ is non-zero. 
	Then there exists an open neighbourhood $U \subseteq \bbR$ of $\lambda_0$ 
	that does not contain any spectral value of $A$ except for $\lambda_0$, 
	and a function ${c: U\setminus\{\lambda_0\} \to (0,\infty)}$ with the following properties:
	
	We have $c(\mu) \to 0$ as $\mu \to \lambda_0$, and
	\begin{align*}
		-c(\mu) \, u \otimes \varphi \le (\mu-\lambda_0)^m \Res(\mu,A)^m - P \le c(\mu) \, u \otimes \varphi
	\end{align*}
	for all $\mu \in U\setminus \{0\}$; 
	here, $P := \frac{\psi \otimes v}{\duality{\psi}{v}}$ denotes the spectral projection 
	of $A$ associated to $\lambda_0$.
\end{lemma}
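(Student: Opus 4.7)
My plan is to reduce the claimed estimate for $(\mu-\lambda_0)^m \Res(\mu,A)^m - P$ to a uniform operator-norm bound on an induced map $E^\varphi \to E_u$, and then to extract the decay via the prefactor $(\mu-\lambda_0)^m$. The starting point is the identity
\[
	(\mu-\lambda_0)^m \Res(\mu,A)^m - P
	=
	(\mu-\lambda_0)^m \, \Res(\mu,A)^{m_1}(I-P)\Res(\mu,A)^{m_2},
\]
which I would deduce as follows: Proposition~\ref{prop:spectral-properties} gives that $\lambda_0$ is an algebraically simple pole of $\Res(\argument,A)$ of order one with projection $P = (v\otimes\psi)/\duality{\psi}{v}$, and $AP=\lambda_0 P$ yields $\Res(\mu,A)^m P = P/(\mu-\lambda_0)^m$; the identity is then immediate from the commutativity of $P$ with the resolvent and from $(I-P)^2 = I-P$.

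So it suffices to sandwich $\Res(\mu,A)^{m_1}(I-P)\Res(\mu,A)^{m_2}$ between $\pm K\, u\otimes\varphi$ by a constant $K$ that is uniform in $\mu$ on a compact neighbourhood of $\lambda_0$. For each individual $\mu$ such a sandwich follows from Proposition~\ref{prop:domination-imlies-estimate-for-product} with $T_1=\Res(\mu,A)^{m_2}$, $T_2=I-P$, $T_3=\Res(\mu,A)^{m_1}$; by Remark~\ref{rem:char-of-rank-1-domination-constants}, the smallest admissible $K$ equals the operator norm of the induced map $E^\varphi \to E_u$. The main obstacle I anticipate is obtaining this $K$ uniformly in $\mu$, since neither $\norm{\Res(\mu,A)^{m_1}}_{E\to E_u}$ nor the extension norm $\norm{\Res(\mu,A)^{m_2}}_{E^\varphi\to E}$ is directly controlled by the $\calL(E)$ operator norm of the resolvent.

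To circumvent this, I would fix any auxiliary $\mu_0\in\resSet(A)$ and iterate the resolvent equation to obtain the commuting factorization $\Res(\mu,A)^k = [I+(\mu_0-\mu)\Res(\mu,A)]^k \Res(\mu_0,A)^k$ for every $k$. Applied with $k=m_1$, this factors $\Res(\mu,A)^{m_1}: E\to E_u$ through $\Res(\mu_0,A)^{m_1}: E\to E_u$ (bounded by the closed graph theorem and $\dom A^{m_1}\subseteq E_u$), with an auxiliary factor on $\calL(E)$ that is norm-continuous in $\mu$ on $\resSet(A)$. Applied with $k=m_2$ and \cite[Proposition~2.1]{DanersGlueck2018b} together with $\dom (A')^{m_2}\subseteq (E')_\varphi$, it factors $\Res(\mu,A)^{m_2}: E^\varphi \to E$ through the bounded extension of $\Res(\mu_0,A)^{m_2}$, again with the residual factor continuous in $\mu$. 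Consequently both relevant norms are uniformly bounded on any compact subset of $\resSet(A)$, providing the desired uniform $K$.

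Putting everything together yields $-K\, u\otimes\varphi \le \Res(\mu,A)^{m_1}(I-P)\Res(\mu,A)^{m_2} \le K\, u\otimes\varphi$ for all $\mu$ in a compact neighbourhood of $\lambda_0$. Setting $c(\mu):=K|\mu-\lambda_0|^m$ and once more invoking the equivalence of Proposition~\ref{prop:char-of-rank-1-domination} (to convert the resulting modulus inequality into a two-sided rank-one estimate regardless of the sign of $(\mu-\lambda_0)^m$) finishes the proof, since $c(\mu)\to 0$ as $\mu\to\lambda_0$.
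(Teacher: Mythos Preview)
Your identity $(\mu-\lambda_0)^m\Res(\mu,A)^m - P = (\mu-\lambda_0)^m\Res(\mu,A)^{m_1}(I-P)\Res(\mu,A)^{m_2}$ is correct and the overall strategy is sound, but the uniformity step has a genuine gap. You bound $\norm{\Res(\mu,A)^{m_1}}_{E\to E_u}$ and $\norm{\Res(\mu,A)^{m_2}}_{E^\varphi\to E}$ \emph{separately} via the factor $[I+(\mu_0-\mu)\Res(\mu,A)]^{m_j}$ in $\calL(E)$, and then pass from ``any compact subset of $\resSet(A)$'' to ``a compact neighbourhood of $\lambda_0$''. But such a neighbourhood contains the spectral value $\lambda_0$, and on the punctured neighbourhood $\norm{\Res(\mu,A)}_{\calL(E)}\sim|\mu-\lambda_0|^{-1}$ (simple pole), so $\norm{[I+(\mu_0-\mu)\Res(\mu,A)]^{m_j}}_{\calL(E)}$ blows up like $|\mu-\lambda_0|^{-m_j}$. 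Your constant $K$ therefore grows like $|\mu-\lambda_0|^{-m}$, and $c(\mu)=K|\mu-\lambda_0|^m$ is merely bounded rather than tending to $0$.

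The remedy is not to estimate the three factors separately but to keep $(I-P)$ attached to the $\mu$-dependent part. By commutativity,
\[
\Res(\mu,A)^{m_1}(I-P)\Res(\mu,A)^{m_2}
=\Res(\mu_0,A)^{m_1}\,\Big([I+(\mu_0-\mu)\Res(\mu,A)]^{m}(I-P)\Big)\,\Res(\mu_0,A)^{m_2},
\]
where now the outer factors are \emph{fixed} and the middle $\calL(E)$-factor stays bounded near $\lambda_0$ because $\Res(\mu,A)(I-P)$ extends holomorphically across $\lambda_0$ (it is the resolvent of $A$ restricted to $(I-P)E$, where $\lambda_0$ is no longer in the spectrum). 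With this correction $c(\mu)=K|\mu-\lambda_0|^m\to 0$ and the argument goes through. This is precisely the mechanism the paper uses: after reducing to $\lambda_0=0$ it writes
\[
(\mu\Res(\mu,A))^m-P=\Res(\lambda,A)^{m_1}\big(S(\mu)^m-\lambda^m P\big)\Res(\lambda,A)^{m_2},
\qquad S(\mu)=\mu+(\lambda-\mu)\mu\Res(\mu,A),
\]
for a fixed $\lambda\in\resSet(A)$; the outer factors are independent of $\mu$, the middle term tends to $0$ in $\calL(E)$ since $\mu\Res(\mu,A)\to P$, and $c(\mu)$ is simply the $E^\varphi\to E_u$ norm of the left-hand side.
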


\begin{proof}
	Replacing $A$ with $A-\lambda_0$, we assume $\lambda_0=0$. 
	By Proposition~\ref{prop:spectral-properties}, 
	the spectral value $0$ is a first order pole of the resolvent $\Res(\argument,A)$ 
	and the corresponding spectral projection is given by $P=\frac{\psi \otimes v}{\duality{\psi}{v}}$. 
	The former implies that $\mu\Res(\mu,A)\to P$ as $\mu \to 0$ 
	in the operator norm (see \cite[Proposition~4.3.15]{ArendtBattyHieberNeubrander2011}). 
	
	Fix a real number $\lambda$ in the resolvent set of $A$. 
	The domination condition implies that the dual operator of $\Res(\lambda,A)^{m_2}$ maps into $(E')_\varphi$,
	so -- as mentioned before Proposition~\ref{prop:domination-imlies-estimate-for-product} --
	$\Res(\lambda,A)^{m_2}$ extends to
	a bounded linear operator from $E^{\varphi}$ to $E$.
	Again by the domination condition and due to the closed graph theorem,
	we also have $\Res(\lambda,A)^{m_1} \in \calL(E,E_u)$.
	
	For all real numbers $\mu \in \resSet(A)$ we get from 
	$\lambda\Res(\lambda,A)P=P$, together with the resolvent equation that
	\[
		\left(\mu \Res(\mu,A)\right)^{m}-P 
		=
		\Res(\lambda,A)^{m_1} \, \big( S(\mu)^{m} - \lambda^m P \big) \, \Res(\lambda,A)^{m_2};
	\]
	where $S(\mu):=\mu+(\lambda-\mu)\mu\Res(\mu,A)$ is a bounded operator on $E$.
	The middle term on the right $S(\mu)^{m} - \lambda^m P$ converges to $0$ 
	with respect to the operator norm on $\calL(E)$ as $\mu \to 0$
	(here we have used that $m \ge 1$) and so we obtain 
	\[
		c(\mu) 
		:= 
		\norm{\left(\mu \Res(\mu,A)\right)^{m}-P}_{E^{\varphi}\to E_u}
		\to 0
	\]
	as $\mu \to 0$.
	Finally, the numbers $c(\mu)$ satisfy the desired estimate
	according to Remark~\ref{rem:char-of-rank-1-domination-constants}.
\end{proof}

As a direct application of Lemma~\ref{lem:resolvent-convergence}, 
we can prove a first eventual positivity result. 
It deals with the powers of the resolvent rather than the resolvent itself. Recall that we have set $m:=m_1+m_2$.

\begin{theorem}
	\label{thm:ev-pos-powers}
	Let both the domination and the spectral assumption be satisfied in Setting~\ref{sett:general}. 
	Then all spectral values of $A$ are isolated, 
	and the following assertions hold:
	\begin{enumerate}[\upshape (a)]
		\item 
		For all $\mu$ in a right neighbourhood of $\lambda_0$ we have 
		\[
			u \otimes \varphi \preceq \Res(\mu,A)^{m} \preceq u \otimes \varphi.
		\]
		
		\item 
		For all $\mu$ in a left neighbourhood of $\lambda_0$ we have
		\[
			-u \otimes \varphi \preceq (-1)^{m-1} \Res(\mu,A)^{m} \preceq -u \otimes \varphi.
		\]
	\end{enumerate}
\end{theorem}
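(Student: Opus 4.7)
The plan is to combine Lemma~\ref{lem:resolvent-convergence} with the observation that, under both our assumptions, the spectral projection $P$ at $\lambda_0$ is itself two-sidedly comparable to $u \otimes \varphi$. Proposition~\ref{prop:spectral-properties} already yields that every spectral value of $A$ is isolated and identifies $P$ (up to a positive scalar) with the rank-one operator built from $v$ and $\psi$. So the only remaining work is to turn the asymptotic estimate of Lemma~\ref{lem:resolvent-convergence} into the desired two-sided bound on $\Res(\mu,A)^m$ itself, keeping track of signs.

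First I would verify that $u \otimes \varphi \preceq P \preceq u \otimes \varphi$. The spectral assumption gives $v \succeq u$ and $\psi \succeq \varphi$, so $P \succeq u \otimes \varphi$. For the upper bound, observe that $v$, being an eigenvector, lies in $\dom{A^n}$ for every $n$, hence by the domination assumption $v \in E_u$, i.e.\ $v \preceq u$; the analogous argument applied to $A'$ yields $\psi \preceq \varphi$, and hence $P \preceq u \otimes \varphi$. Assuming $m \ge 1$ (otherwise Lemma~\ref{lem:resolvent-convergence} does not apply; the case $m = 0$ is so restrictive that it can be handled directly), one application of Lemma~\ref{lem:resolvent-convergence} now produces a punctured neighbourhood of $\lambda_0$ in $\resSet(A)$ on which
\begin{align*}
	-c(\mu)\, u \otimes \varphi \;\le\; (\mu - \lambda_0)^m \Res(\mu,A)^m - P \;\le\; c(\mu)\, u \otimes \varphi,
\end{align*}
with $c(\mu) \to 0$. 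Shrinking this neighbourhood so that $c(\mu)$ is dominated by the constant witnessing $P \ge c_1 \, u \otimes \varphi$, one concludes that $(\mu - \lambda_0)^m \Res(\mu,A)^m$ is sandwiched between two \emph{positive} multiples of $u \otimes \varphi$.

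It then remains to divide by the scalar $(\mu - \lambda_0)^m$ and track its sign. For $\mu > \lambda_0$ the scalar is strictly positive, so the sandwich transfers verbatim to $\Res(\mu,A)^m$ and yields assertion~(a). For $\mu < \lambda_0$, write $(\mu - \lambda_0)^m = (-1)^m (\lambda_0 - \mu)^m$ with $(\lambda_0 - \mu)^m > 0$; dividing shows that $(-1)^m \Res(\mu,A)^m$ is sandwiched between positive multiples of $u \otimes \varphi$, which is equivalent to $(-1)^{m-1} \Res(\mu,A)^m$ being sandwiched between negative multiples, i.e.\ assertion~(b). The main point needing care, and which I expect to be the only genuine subtlety, is the verification of $P \preceq u \otimes \varphi$: without this upper bound, Lemma~\ref{lem:resolvent-convergence} would still yield a positive lower bound for $(\mu-\lambda_0)^m \Res(\mu,A)^m$, but not the matching upper bound that produces the full two-sided conclusion.
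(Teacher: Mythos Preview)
Your proposal is correct and follows essentially the same approach as the paper: both rely on Lemma~\ref{lem:resolvent-convergence} together with the two-sided comparison $u \otimes \varphi \preceq P \preceq u \otimes \varphi$, and then divide by $(\mu-\lambda_0)^m$ while tracking the sign. Your justification of the upper bound $P \preceq u \otimes \varphi$ via $v \in \dom{A^{m_1}} \subseteq E_u$ and $\psi \in \dom{(A')^{m_2}} \subseteq (E')_\varphi$ is exactly the argument the paper invokes (by pointing back to the proof of Proposition~\ref{prop:domination-powers-one-sided}), and your treatment of the $m=0$ case as a degenerate side case mirrors the paper as well.
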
 
\begin{proof}
	According to Proposition~\ref{prop:spectral-properties}(a), 
	all spectral values of $A$ are isolated.

	Next we note that, if $m = 0$, then $E = E_u$ and $(E')_\varphi = E'$,
	so by Proposition~\ref{prop:domination-imlies-estimate-for-product} every
	real bounded linear operator $S$ on $E$ satisfies
	$u \otimes \varphi \preceq S \preceq u \otimes \varphi$.	
	Thus, we may assume from now on that $m \ge 1$,
	so that Lemma~\ref{lem:resolvent-convergence} can be applied.
	Let the open set $U$, the function $c$, and the spectral projection $P$ be as in the lemma. 
	Just like in the proof of Proposition~\ref{prop:domination-powers-one-sided}, 
	we observe that we have the estimate $u\otimes \varphi \preceq P \preceq u\otimes \varphi$.
	The assertions can now be deduced from
	\[
		P-c(\mu) \, u \otimes \varphi 
		\le 
		\left((\mu-\lambda_0) \Res(\mu,A)\right)^{m} 
		\le 
		P+c(\mu) \, u \otimes \varphi
	\]
	and $c(\mu)\to 0$ as $\mu \to \lambda_0$.
\end{proof}

Our main result, Theorem~\ref{thm:main}, essentially says that 
we can drop the exponent $m$ in the conclusion of Theorem~\ref{thm:ev-pos-powers} 
if a certain a priori estimate holds for the resolvent. 
Let us finally prove this result; 
the argument is quite similar to the proof of Lemma~\ref{lem:resolvent-convergence}
and Theorem~\ref{thm:ev-pos-powers}:

\begin{proof}[Proof of Theorem~\ref{thm:main}]
	It is a consequence of the domination assumption that every spectral value 
	of $A$ is isolated, see Proposition~\ref{prop:spectral-properties}(a).
	
	By a change of signs, it suffices to prove part~(a) of the theorem. 
	To this end, without loss of generality, assume that $\lambda_0=0$. Let  
	\begin{align*}
		R_{\mu} 
		& :=
		(\mu_0-\mu)^m \Res(\mu_0,A)^m\mu \Res(\mu,A)
		\\
		& =
		(\mu_0-\mu)^m \Res(\mu_0,A)^{m_1} \mu \Res(\mu,A) \Res(\mu_0,A)^{m_2}
	\end{align*}
	for all real numbers $\mu$ in the resolvent set of $A$ 
	and let $P$ be the spectral projection of $A$ corresponding to $0$. 
	As in the proof of Lemma~\ref{lem:resolvent-convergence}, we can see that
	\begin{align*}
		c(\mu) := \norm{R_{\mu} - P}_{E^\varphi \to E_u} \to 0
	\end{align*}
	as $\mu \to 0$ (but this time the convergence is also true for $m=0$).
	Again, we will use that
	\[
		R_{\mu}\geq P-c(\mu) (u\otimes \varphi)
	\]
	for every $\mu\in \resSet(A) \cap \bbR$; 
	this is an immediate consequence of Remark~\ref{rem:char-of-rank-1-domination-constants}.	 
	Since $P \succeq u \otimes \varphi$, there exists $c>0$ such that 
	$R_{\mu} \geq (c-c(\mu)) u\otimes \varphi$ for all $\mu \in \resSet(A) \cap \bbR$. 
	
	Now, let us consider numbers $\mu \in (0, \mu_0] \cap \resSet(A)$. 
	Then the finite expansion of the resolvent from Lemma~\ref{lem:finite-expansion-of-resolvent}, 
	yields
	\begin{align*}
		\mu\Res(\mu,A)
		& = 
		\mu\sum_{k=0}^{m-1} (\mu_0-\mu)^k\Res(\mu_0,A)^{k+1}+R_{\mu}
		\\
		& \geq 
		\mu \sum_{k=0}^{m-1} (\mu_0-\mu)^k \Res(\mu_0,A)^{k+1} + (c-c(\mu))u\otimes \varphi.
	\end{align*}
	The assumption $\Res(\mu_0,A)\succeq -u\otimes \varphi$ 
	and Proposition~\ref{prop:domination-powers-one-sided}(a)
	ensure that we have $\Res(\mu_0,A)^{k+1} \ge - c_k u\otimes \varphi$ 
	for all $k \in \{0, \dots, m-1\}$ and positive constants $c_0, \dots, c_{m-1}$. 
	Since the coefficients $\mu (\mu_0-\mu)^k$ are all positive, we deduce that
	\[
		\mu \Res(\mu,A) \geq - d(\mu) \, u\otimes \varphi + (c-c(\mu)) \, u\otimes \varphi;
	\]
	here, the number $d(\mu)$ is defined as $d(\mu) := \mu \sum_{k=0}^{m-1} (\mu_0-\mu)^k c_k$ 
	and thus converges to $0$ as $\mu\to 0$. 
	It follows that $\Res(\mu,A)\succeq u\otimes\varphi$ 
	for all $\mu$ in a sufficiently small right neighbourhood of $0$.
\end{proof}

We end this section with a very useful consequence of Theorems~\ref{thm:main}
and~\ref{thm:domination-interval-one-sided}:
a characterization for the uniform anti-maximum principle under the condition
that a certain resolvent estimate holds at a single point on the right of $\lambda_0$.
As before, all spectral values of $A$ in the following theorem and in its corollary 
are isolated as a consequence of Proposition~\ref{prop:spectral-properties}(a).

\begin{theorem}
	\label{thm:anti-max-characterization}
	Let both the domination and the spectral assumption be satisfied in Setting~\ref{sett:general}.
	If there exists a number $\mu_1>\lambda_0$ in the resolvent set of $A$ 
	such that $\Res(\mu_1,A)\succeq -u\otimes \varphi$, 
	then the following are equivalent:
	\begin{enumerate}[ref=(\roman*)]
		\item\label{item:anti-max-characterization:anti-max-strong} 
		The \emph{uniform anti-maximum principle} holds, i.e., we have
		\[
			\Res(\mu,A) \preceq - u \otimes \varphi
		\]
		for all $\mu$ in a left neighbourhood of $\lambda_0$.
				
		\item \label{item:anti-max-characterization:negative-one-point}
		There exists a number $\mu_0 < \lambda_0$ in the resolvent set of $A$ 
		at which we have $\Res(\mu_0,A) \leq 0$.
		
		\item\label{item:anti-max-characterization:one-point} 
		We have $\Res(\mu_1,A) \preceq u\otimes \varphi$.
	\end{enumerate}
\end{theorem}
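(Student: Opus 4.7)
The plan is to establish the cyclic chain \ref{item:anti-max-characterization:anti-max-strong} $\Rightarrow$ \ref{item:anti-max-characterization:negative-one-point} $\Rightarrow$ \ref{item:anti-max-characterization:one-point} $\Rightarrow$ \ref{item:anti-max-characterization:anti-max-strong}, drawing on the propagation results from Section~\ref{section:bounds-of-resolvents} and the sufficient condition of Theorem~\ref{thm:main}.

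The implication \ref{item:anti-max-characterization:anti-max-strong} $\Rightarrow$ \ref{item:anti-max-characterization:negative-one-point} is immediate: since $u \in E_+$ and $\varphi \in E'_+$, the rank-one operator $u \otimes \varphi$ is positive, so $\Res(\mu, A) \preceq -u \otimes \varphi$ forces $\Res(\mu, A) \le 0$ for every $\mu$ in the left neighbourhood of $\lambda_0$ supplied by~\ref{item:anti-max-characterization:anti-max-strong}, and any such $\mu$ can serve as the $\mu_0$ in~\ref{item:anti-max-characterization:negative-one-point}. For \ref{item:anti-max-characterization:negative-one-point} $\Rightarrow$ \ref{item:anti-max-characterization:one-point}, note that $\Res(\mu_0, A) \le 0 \le u \otimes \varphi$ trivially yields $\Res(\mu_0, A) \preceq u \otimes \varphi$, and since $\mu_0 < \lambda_0 < \mu_1$ are both in $\resSet(A)$, Theorem~\ref{thm:domination-interval-one-sided}(b) transports this upper bound from $\mu_0$ to $\mu_1$.

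The substantive step is \ref{item:anti-max-characterization:one-point} $\Rightarrow$ \ref{item:anti-max-characterization:anti-max-strong}. I would combine the standing hypothesis $\Res(\mu_1, A) \succeq -u \otimes \varphi$ with \ref{item:anti-max-characterization:one-point} to obtain the two-sided estimate $-u \otimes \varphi \preceq \Res(\mu_1, A) \preceq u \otimes \varphi$ at the single point $\mu_1$. Proposition~\ref{prop:extend-domination-two-sided} then upgrades this to
\[
  -u \otimes \varphi \preceq \Res(\mu, A) \preceq u \otimes \varphi
\]
for every real $\mu \in \resSet(A)$. Because $\lambda_0$ is isolated in the spectrum (Proposition~\ref{prop:spectral-properties}(a)), the resolvent set contains points strictly to the left of $\lambda_0$; for any such $\mu_0 < \lambda_0$ one has $\Res(\mu_0, A) \preceq u \otimes \varphi$, which is exactly the hypothesis of Theorem~\ref{thm:main}(b), whose conclusion is precisely~\ref{item:anti-max-characterization:anti-max-strong}.

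The main conceptual point — and really the only obstacle, since all the heavy lifting has been done in earlier sections — is recognising that the standing assumption on the right of $\lambda_0$ supplies exactly the missing lower bound which, when paired with~\ref{item:anti-max-characterization:one-point}, unlocks Proposition~\ref{prop:extend-domination-two-sided}. Once that two-sided estimate propagates freely across $\resSet(A) \cap \bbR$, it furnishes the one-sided upper bound on the left of $\lambda_0$ required by Theorem~\ref{thm:main}(b), and the loop closes.
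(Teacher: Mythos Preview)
Your proof is correct and follows exactly the same route as the paper: the same cyclic chain (i) $\Rightarrow$ (ii) $\Rightarrow$ (iii) $\Rightarrow$ (i), the same use of Theorem~\ref{thm:domination-interval-one-sided}(b) for (ii) $\Rightarrow$ (iii), and the same combination of Proposition~\ref{prop:extend-domination-two-sided} with Theorem~\ref{thm:main}(b) for (iii) $\Rightarrow$ (i). Your explicit remark that $\lambda_0$ is isolated (so that points $\mu_0 < \lambda_0$ in $\resSet(A)$ actually exist) is a small clarifying detail the paper leaves implicit.
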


\begin{proof}
	\Implies{item:anti-max-characterization:anti-max-strong}{item:anti-max-characterization:negative-one-point}
	This implication is of course, obvious.
	
	\Implies{item:anti-max-characterization:negative-one-point}{item:anti-max-characterization:one-point}
	We have $\Res(\mu_0,A) \leq 0 \preceq u\otimes \varphi$, which, along with Theorem~\ref{thm:domination-interval-one-sided}(b) gives
	the desired estimate $\Res(\mu_1,A) \preceq u\otimes \varphi$.
	
	\Implies{item:anti-max-characterization:one-point}{item:anti-max-characterization:anti-max-strong} 
	Because of the two-sided estimate
	\[
		- u\otimes \varphi \preceq \Res(\mu_1,A) \preceq u\otimes \varphi,
	\]
	we can employ Proposition~\ref{prop:extend-domination-two-sided} to obtain 
	$\Res(\mu_0,A)\preceq u\otimes \varphi$ for all $\mu_0 \in \resSet(A) \cap \bbR$
	and thus, in particular, for all real number $\mu_0 \in \resSet(A)$ 
	that are located on the left of $\lambda_0$.
	Thus $A$ satisfies the uniform anti-maximum principle 
	by Theorem~\ref{thm:main}\ref{thm:main:item:anti-max}.
\end{proof}

Theorem~\ref{thm:anti-max-characterization} 
can be seen as an abstract and very general version of \cite[Theorem~1]{GrunauSweers2001}, 
where the uniform anti-maximum principle for polyharmonic operators was characterized. 
In particular, \cite[Theorem~1]{GrunauSweers2001} can be derived 
with the help of Theorem~\ref{thm:anti-max-characterization} 
(see Theorem~\ref{thm:polyharmonic-ball-anti-max} below for details).

A special case of the previous theorem is the situation
where the resolvent at $\mu_1$ is even a positive operator.
It is worthwhile to state this special case explicitly in an extra corollary
-- due to the prevalence of the assumption $\Res(\mu_1,A)\geq 0$ in applications
and since it is particularly relevant for the important case of generators of positive semigroups.

\begin{corollary}
	\label{cor:anti-max-characterization}
	Let both the domination and the spectral assumption be satisfied in Setting~\ref{sett:general}. 
	If there exists $\mu_1>\lambda_0$ in the resolvent set of $A$ such that $\Res(\mu_1,A)\geq 0$, 
	then the following are equivalent.
	\begin{enumerate}[ref=(\roman*)]
		\item 
		The \emph{uniform anti-maximum principle} holds, i.e., we have
		\[
			\Res(\mu,A) \preceq - u \otimes \varphi
		\]
		for all $\mu$ in a left neighbourhood of $\lambda_0$.
		
		\item
		There exists a number $\mu_0 < \lambda_0$ in the resolvent set of $A$ 
		at which we have $\Res(\mu_0,A) \leq 0$.
				
		\item 
		We have $\Res(\mu_1,A) \preceq u\otimes \varphi$.
	\end{enumerate}
\end{corollary}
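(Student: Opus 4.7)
The plan is to derive this corollary as an immediate special case of Theorem~\ref{thm:anti-max-characterization}. The only thing to check is that the hypothesis $\Res(\mu_1,A) \geq 0$ implies the weaker hypothesis $\Res(\mu_1,A) \succeq -u \otimes \varphi$ appearing in the theorem; once this is done, the three conditions in the corollary are precisely the corresponding three conditions in Theorem~\ref{thm:anti-max-characterization}, so their equivalence is inherited.

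First I would observe that $u \otimes \varphi$ is a positive rank-one operator, since $u \in E_+$ and $\varphi \in E'_+$. Therefore $-u \otimes \varphi \le 0$, and consequently the assumption $\Res(\mu_1,A) \ge 0$ gives
\[
	\Res(\mu_1,A) \ge 0 \ge -u \otimes \varphi,
\]
so we even have $\Res(\mu_1,A) \ge c \cdot (-u \otimes \varphi)$ for every $c > 0$. In particular $\Res(\mu_1,A) \succeq -u \otimes \varphi$ in the sense defined in the introduction.

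Having verified this, the hypotheses of Theorem~\ref{thm:anti-max-characterization} are met, and the equivalences (i) $\Leftrightarrow$ (ii) $\Leftrightarrow$ (iii) in the corollary are exactly those in that theorem. No further argument is needed: the corollary is obtained verbatim by invoking Theorem~\ref{thm:anti-max-characterization} with this strengthened hypothesis at $\mu_1$.

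There is no real obstacle here; the only conceptual point worth highlighting in the write-up is why the corollary is stated separately at all. The answer is that the positivity assumption $\Res(\mu_1,A) \ge 0$ is the form in which the hypothesis typically arises in applications, in particular for generators $A$ of positive $C_0$-semigroups, where positivity of $\Res(\mu_1,A)$ for some (equivalently, all) $\mu_1 > \spb(A)$ is automatic. Thus the corollary merely records the practically most useful instance of Theorem~\ref{thm:anti-max-characterization}, and its proof is just the one-line observation above.
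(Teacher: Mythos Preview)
Your proposal is correct and matches the paper's approach exactly: the paper presents this corollary as an immediate special case of Theorem~\ref{thm:anti-max-characterization} without giving a separate proof, precisely because $\Res(\mu_1,A) \ge 0 \ge -u \otimes \varphi$ yields $\Res(\mu_1,A) \succeq -u \otimes \varphi$. Your additional remark on why the corollary is stated separately also agrees with the paper's own motivation.
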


Let us stress that, if $A$ generates a positive $C_0$-semigroup, 
then the resolvent is positive on the right of the spectral bound; 
thus, Corollary~\ref{cor:anti-max-characterization} gives a characterization 
of the anti-maximum principle for such operators (for $\lambda_0=\spb(A)$). 

By a change of signs, one can formulate an analogue of Theorem~\ref{thm:anti-max-characterization} 
and Corollary~\ref{cor:anti-max-characterization} for the maximum principle as well.

\section{Examples and applications}
\label{section:applications}

In this section, we consider a variety of concrete differential operators
and demonstrate how our results can be used to prove or disprove
that they satisfy a uniform (anti-)maximum principle.
Our choice of examples does by no means cover all operators for which (anti-)maximum principles
have been proved in the literature.
It is rather our goal to demonstrate the applicability and the strength of our abstract approach
by discussing merely a selection of examples, namely of the following three types:

\begin{enumerate}[(1)]
	\item
	We revisit a few easy and well-understood toy examples to demonstrate how our machinery works in principle
	(Proposition~\ref{prop:laplace-operator-on-interval}).
	
	\item 
	We prove (anti-)maximum principles for several (classes of) operators
	for which these results seem to be new
	(Propositions~\ref{prop:laplace-in-intervall-non-local-symmetric} 
	and~\ref{prop:laplace-in-intervall-non-local-thermostat}, as well as
	Subsections~\ref{subsection:application-metric-graph}, 
	\ref{subsection:applications:odd-order}, and~\ref{subsection:application-delay-operator})
	
	\item 
	We look at several operators for which (anti-)maximum principles are already known,
	and we illustrate how our abstract results can be used to facilitate the proofs of these results
	(Subsection~\ref{subsection:applications:polyharmonic-ball}).
\end{enumerate}

\subsection{Laplace operators on intervals}

In this subsection, we prove anti-maximum principles for the Laplace operator
on an interval with various boundary conditions.

As a warm-up, we first revisit a few simple and classical boundary conditions
in Proposition~\ref{prop:laplace-operator-on-interval}.
There is nothing novel in this proposition, 
but it serves as a nice example to illustrate how our results can be applied.
On a more serious note, we then proceed with anti-maximum principles
for certain non-local boundary conditions
(Propositions~\ref{prop:laplace-in-intervall-non-local-symmetric}
and~\ref{prop:laplace-in-intervall-non-local-thermostat}).

Throughout, let $(\alpha,\beta) \subseteq \bbR$ be a non-empty and bounded open interval.
On $E := L^2(\alpha,\beta)$ we consider the Laplace operator
\begin{equation}
	\label{eq:laplace-operator-on-interval-bc}
	\begin{split}
		\dom{\Delta} &:=\{f\in H^2(\alpha,\beta): \; \BoundCond{f}\},\\
			\Delta f &= f'',
	\end{split}
\end{equation}
where $\BoundCond{f}$ denote the boundary conditions 
imposed on the vectors $f$ in the domain of $\Delta$.

Firstly, we go over three classical examples: 
the Laplace operator on the interval $(0,1)$ 
subject to Dirichlet, Neumann, and periodic boundary conditions.
The following proposition contains both positive and negative results about
the anti-maximum principle for these operators.

\begin{proposition} 
	\label{prop:laplace-operator-on-interval}
	Let $(\alpha,\beta) = (0,1)$.
	Then the following assertions hold for the Laplace operator $\Delta$
	defined in~\eqref{eq:laplace-operator-on-interval-bc}
	with Dirichlet, Neumann, and periodic boundary conditions, respectively.
	\begin{enumerate}[\upshape (a)]
		\item 
		\emph{No uniform anti-maximum principle for Dirichlet boundary conditions:} 
		Let $\BoundCond{f} : \Leftrightarrow f(0) = f(1) = 0$. \\
		Then there is no real number $\mu \in \resSet(\Delta)$ for which we have
		$\Res(\mu,\Delta) \le 0$
		(in particular, $\Delta$ does not satisfy a uniform anti-maximum principle 
		at its spectral bound $\spb(\Delta) = -\pi^2$).
		
		\item 
		\emph{Uniform anti-maximum principle for Neumann boundary conditions:} 
		Let $\BoundCond{f} : \Leftrightarrow f'(0) = f'(1) = 0$. \\
		Then for every $\mu$ in a left neighbourhood of $\spb(\Delta) = 0$, 
		we have that $\Res(\mu,\Delta) \preceq - \one \otimes \one$.
		
		\item 
		\emph{Uniform anti-maximum principle for periodic boundary conditions:} 
		Let $\BoundCond{f} : \Leftrightarrow \big(f(0) = f(1) \text{ and } f'(0) = f'(1)\big)$. \\
		Then for every $\mu$ in a left neighbourhood of $\spb(\Delta) = 0$, 
		we have that $\Res(\mu,\Delta) \preceq - \one \otimes \one$.
	\end{enumerate}  
\end{proposition}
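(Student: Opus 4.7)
The plan is to treat all three parts uniformly by first verifying Setting~\ref{sett:general} and then invoking Corollary~\ref{cor:anti-max-characterization}. Since $\Delta$ is self-adjoint on $E = L^2(0,1)$ in each case, one takes $\psi = v$ and $\varphi = u$, so only an $E$-side argument is needed. The spectral data are classical: for (a) I would set $\lambda_0 = -\pi^2$, $v(x) = \sin(\pi x)$, $u = v$; for (b) and (c) I would set $\lambda_0 = 0$, $v = \one$, $u = v$. The only non-immediate ingredient is then the domination $\dom{\Delta} \subseteq E_u$. For (b) and (c) this is the one-dimensional Sobolev embedding $H^2(0,1) \hookrightarrow L^\infty(0,1) = E_{\one}$. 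For (a), any $f \in \dom{\Delta} \subseteq H^2(0,1) \cap H^1_0(0,1)$ is $C^1$ on $[0,1]$ with $f(0) = f(1) = 0$, whence $|f(x)| \le \|f'\|_\infty \min(x, 1 - x)$; combined with the elementary inequality $\sin(\pi x) \ge 2 \min(x, 1 - x)$ on $[0,1]$, this gives $\dom{\Delta} \subseteq E_{\sin(\pi \cdot)}$.

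For parts (b) and (c), I would apply Corollary~\ref{cor:anti-max-characterization} with any $\mu_1 > 0$; one has $\Res(\mu_1, \Delta) \ge 0$ in both settings because $\Delta$ generates a positive $C_0$-semigroup. It then suffices to verify condition~(ii) of the corollary, namely that some $\mu_0 < 0$ in $\resSet(\Delta)$ satisfies $\Res(\mu_0, \Delta) \le 0$, which I would do by direct Green's function computation at $\mu_0 = -\omega^2$. For Neumann boundary conditions one finds
\[
	G_{\mu_0}(x, \xi) = -\frac{\cos(\omega \min(x,\xi))\, \cos(\omega (1-\max(x,\xi)))}{\omega \sin \omega},
\]
which is strictly negative when $\omega \in (0, \pi/2)$. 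For periodic boundary conditions the Green's function is translation invariant, equal to $g(x - \xi)$ with
\[
	g(y) = -\frac{\cos\!\big(\omega(\tfrac{1}{2} - |y|)\big)}{2 \omega \sin(\omega/2)},
\]
which is strictly negative when $\omega \in (0, \pi)$. The uniform anti-maximum principle in each case then follows at once.

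For part (a), I would split according to the sign of $\mu + \pi^2$. For $\mu > -\pi^2$ in $\resSet(\Delta)$, $\Res(\mu, \Delta) \ge 0$ by semigroup positivity and is nonzero, so certainly $\Res(\mu, \Delta) \not\le 0$. For $\mu < -\pi^2$ in $\resSet(\Delta)$, I would argue by contradiction: were $\Res(\mu, \Delta) \le 0$, then Corollary~\ref{cor:anti-max-characterization} applied at $\mu_1 = 0$ (where $\Res(0, \Delta) = (-\Delta)^{-1} \ge 0$) would yield its condition~(iii), i.e., $\Res(0, \Delta) \preceq \sin(\pi \cdot) \otimes \sin(\pi \cdot)$; this translates to the pointwise bound $G_0(x, \xi) \le c\, \sin(\pi x) \sin(\pi \xi)$ on the classical Dirichlet Green's function $G_0(x, \xi) = \min(x, \xi)(1 - \max(x, \xi))$. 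Setting $x = \xi$ and letting $x \to 0^+$ gives $G_0(x, x) = x(1-x) \sim x$ while $\sin^2(\pi x) \sim \pi^2 x^2$, so the ratio blows up and no finite $c$ can suffice, which is the required contradiction.

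The main obstacle is producing the right bridge from the abstract non-existence assumption to a concrete computable object in part (a), since the claim there is considerably stronger than merely the failure of the uniform anti-maximum principle at $-\pi^2$: it asserts that no real resolvent is non-positive at all. Corollary~\ref{cor:anti-max-characterization} supplies precisely this bridge, converting the existence of a single non-positive resolvent on the left of $-\pi^2$ into a pointwise rank-one bound on the classical positive Dirichlet resolvent at $\mu=0$, which then fails by elementary boundary asymptotics. For parts (b) and (c) the only non-trivial work is the explicit Green's function computation and the attendant sign analysis, both of which are routine for constant-coefficient second-order problems on an interval.
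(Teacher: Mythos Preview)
Your proof is correct. For part~(a) it coincides with the paper's argument: both verify Setting~\ref{sett:general} with $u=\varphi=\sin(\pi\,\cdot\,)$, reduce to the case $\mu<-\pi^2$ via positivity of the semigroup, deduce $\Res(0,\Delta)\preceq u\otimes u$ (you via Corollary~\ref{cor:anti-max-characterization}, the paper via the underlying Theorem~\ref{thm:domination-interval-one-sided}(b); these amount to the same step), and then obtain a contradiction from the explicit Dirichlet Green's function by comparing diagonal asymptotics near the boundary.

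For parts~(b) and~(c) you take a genuinely different route. The paper verifies condition~(iii) of Corollary~\ref{cor:anti-max-characterization}, namely $\Res(\mu_1,\Delta)\preceq\one\otimes\one$, without any computation: since $\Delta$ is self-adjoint and its form domain is contained in $H^1(0,1)\subseteq L^\infty(0,1)=E_{\one}$, Proposition~\ref{prop:form-domain-estimates} immediately gives the two-sided rank-one bound. You instead verify condition~(ii) by explicitly computing the Green's function at $\mu_0=-\omega^2$ and checking its sign. Your formulas are correct and the sign analysis goes through in the stated ranges of $\omega$. The paper's approach has the advantage of avoiding all explicit kernel computations and illustrating that the abstract form-domain criterion already suffices; your approach has the virtue of being entirely self-contained and of producing, as a by-product, an explicit $\mu_0$ at which the resolvent is negative.
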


\begin{proof}
	(b) and (c): 	
	Let $\one$ denote the constant one function in $E$.
	Since $\Delta$ is self-adjoint and $\dom{\Delta}$ is a subset of $L^\infty(0,1) = (L^2(0,1))_{\one}$,
	the domination assumption in Setting~\ref{sett:general}
	is satisfied for $u = \varphi = \one$ and for $m_1 = m_2 = 1$.
	
	Due to the boundary conditions, $\one$ is a geometrically simple eigenvalue 
	for the spectral bound $\spb(\Delta) = 0$.
	Whence, the spectral assumption in Setting~\ref{sett:general} is also satisfied.
	
	Since the operator $\Delta$ generates a positive $C_0$-semigroup,
	its resolvent is positive at every point in $(0,\infty)$.
	Thus, the characterization of the anti-maximum principle from
	Corollary~\ref{cor:anti-max-characterization} is applicable.
	In particular, we only need to show that the estimate 
	$\Res(1,\Delta) \preceq \one \otimes \one$ holds.
	
	But this estimate follows from Proposition~\ref{prop:form-domain-estimates} 
	since the self-adjoint operator $\Delta$ is associated with a symmetric bilinear form $a$
	whose form domain $\dom{a}$ is a subset of $H^1(0,1)$ and
	thus a subset of $L^\infty(0,1) = (L^2(0,1))_{\one}$.
	
	(a) 
	For Dirichlet boundary conditions, the spectral and domination assumptions
	are again satisfied in Setting~\ref{sett:general} for $m_1 = m_2 = 1$,
	but this time for the function $u = \varphi$ given by $u(x) := \sin(\pi x)$, 
	which is the first eigenfunction of $\Delta$.
	
	Since $\Delta$ generates a positive $C_0$-semigroup \cite[Example~11.14(b)]{BatkaiKramarRhandi2017},
	we can again apply the characterization of the anti-maximum principle from
	Corollary~\ref{cor:anti-max-characterization}.
	
	Now assume towards a contradiction 
	that there exists a number $\mu \in \resSet(\Delta) \cap \bbR$
	such that $\Res(\mu,\Delta) \le 0$.
	Since $\Delta$ generates a positive semigroup, we must then have $\mu < \spb(\Delta) = -\pi^2$
	and hence, it follows from Theorem~\ref{thm:domination-interval-one-sided}(b)
	that $\Res(0,\Delta) \preceq u \otimes u$.
	However, it is not difficult to compute $\Res(0,\Delta)$ explicitly:
	for each $f \in L^2(0,1)$ the formula
	\[
		\Res(0,\Delta)f(x)= \int_0^x y(1-x) f(y) \dx y + \int_x^1 x(1-y) f(y) \dx y
 	\]
 	holds for every $x\in (0,1)$.
 	We can now infer that the integral kernel (or the \emph{Green's function}) $G: (0,1) \times (0,1) \to \bbR$ of $\Res(0,\Delta)$ is given by
 	\begin{align*}
 		G(x,y) =
 		\begin{cases}
 			y(1-x) \quad & \text{if } y \le x, \\
 			x(1-y) \quad & \text{if } y \ge x.
 		\end{cases}
 	\end{align*}
 	Hence, $G$ is not dominated by a positive multiple of $(x,y) \mapsto \sin(\pi x) \sin(\pi y)$,
 	as can be seen by considering the behaviour of $G$ on the diagonal of the square $(0,1)^2$
 	as $(x,x)$ approaches the corner $(0,0)$.
	As a result, $\Res(0,\Delta) \not\preceq u \otimes u$, which gives us the desired contradiction. 
\end{proof}

One can also use our results to show maximum principles for the 
boundary conditions in the previous theorem.
However, for those boundary conditions, lower estimates of the form
$\Res(\mu, \Delta) \succeq u \otimes u$ 
(where $u = \sin(\pi \argument)$ for Dirichlet boundary conditions
and $u = \one$ for Neumann or periodic boundary conditions)
are well-known to hold even for all $\mu > \spb(\Delta)$.
More generally, for many generators of positive $C_0$-semigroups,
such estimates for all $\mu$ that are larger than the spectral bound
can be shown by means of ultracontractivity properties of the semigroup;
see for instance \cite[Theorem~4.2.5]{Davies1989}.

We again emphasize that the results in Proposition~\ref{prop:laplace-operator-on-interval}
are hardly surprising and are merely a first and simple illustration of our methods.
Part~(b) of the proposition can, for instance, be found 
-- for more general coefficients and Robin boundary conditions, actually -- 
in \cite[Theorem~5.3]{ClementPeletier1979}. 
In the introduction of the same paper,
it was also mentioned that part~(a) of Proposition~\ref{prop:laplace-operator-on-interval} above
can be seen by explicitly computing the resolvent of the Dirichlet Laplacian
at numbers $\mu$ that are smaller than the spectral bound.
We do not know an explicit reference where part~(c) of the proposition is proved,
but due to the spectral and domination properties of the Laplace operator with periodic boundary conditions
-- which are very similar to the case of Neumann boundary conditions --
the result is not much of a surprise.

Before we proceed to discuss more involved boundary conditions,
let us briefly comment on the anti-maximum principle of the Laplace operator in higher dimensions.
It was shown in \cite[Theorem~3.2]{ClementPeletier1979} that if $\Omega\subseteq \bbR^n$ is a bounded smooth domain, 
then a non-uniform anti-maximum principle holds at the first eigenvalue 
for the Laplacian with mixed Dirichlet and Neumann boundary conditions 
whenever the initial value lies in $L^p(\Omega)$ for $p>n$.
In the introduction of the same paper, 
it is explained why the result cannot be uniform in dimension $\ge 2$.
We also note that the non-uniform anti-maximum principle no longer holds if $p\leq n$; 
see \cite{Sweers1997}. 

Let us now get back to the Laplace operator in dimension one,
given by~\eqref{eq:laplace-operator-on-interval-bc} 
-- but this time we consider non-local boundary conditions.
More precisely, let $B \in \bbR^{2 \times 2}$; 
in the domain $\dom{\Delta}$ in~\eqref{eq:laplace-operator-on-interval-bc}
we now consider the boundary conditions
\begin{align}
	\label{eq:interval-bc-non-local}
	\BoundCond{f} 
	\quad :\Leftrightarrow \quad 
	\frac{\partial}{\partial \nu} f 
	= 
	-B 
	\begin{pmatrix}
		f(\alpha) \\
		f(\beta)
	\end{pmatrix},
\end{align}
where
\begin{align*}
	\frac{\partial}{\partial \nu} f
	:=
	\begin{pmatrix}
		-f'(\alpha) \\
		 f'(\beta)
	\end{pmatrix}
\end{align*}
denotes the outer normal derivative of $f$ at the boundary of the interval $(\alpha,\beta)$.

In the following two propositions, we deal with two special cases 
of the boundary conditions~\eqref{eq:interval-bc-non-local} 
that have already been treated in \cite[pp.\,2625--2627]{DanersGlueckKennedy2016b}.
The results developed in the current paper enable us to show 
uniform anti-maximum principles for these boundary conditions,
which are not contained in \cite{DanersGlueckKennedy2016b}. In addition, uniform eventual positivity of the corresponding semigroups was the content of \cite[Theorems~4.2 and 4.3]{DanersGlueck2018b}.

\begin{proposition}
	\label{prop:laplace-in-intervall-non-local-symmetric}
	Let $(\alpha, \beta) = (0,1)$.
	Endow the Laplace operator $\Delta$ defined in~\eqref{eq:laplace-operator-on-interval-bc}
	with the boundary conditions~\eqref{eq:interval-bc-non-local},
	where we choose the matrix $B$ as
	\[
		B =
		\begin{bmatrix}
			1 & 1 \\
			1 & 1
		\end{bmatrix}.
	\] 
	Then we have $\spb(\Delta) < 0$.
	Moreover, for all $\mu$ in a right neighbourhood of $\spb(\Delta)$,
	we have that $\Res(\mu,\Delta) \succeq \one \otimes \one$,
	and for all $\mu$ in a left neighbourhood of $\spb(\Delta)$,
	the estimate $\Res(\mu, \Delta) \preceq - \one \otimes \one$ holds.
\end{proposition}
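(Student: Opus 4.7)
The plan is to apply Theorem~\ref{thm:main} with the choice $u = \varphi = \one$, where $\one$ denotes the constant one function on $(0,1)$, and with exponents $m_1 = m_2 = 1$. My first step would be to identify $\Delta$ as the self-adjoint operator associated with the closed, densely defined, symmetric form $a: H^1(0,1) \times H^1(0,1) \to \bbC$ given by
\[
	a(f,g) = \int_0^1 f'\,\overline{g'} \dx x + \big(f(0)+f(1)\big)\overline{\big(g(0)+g(1)\big)};
\]
a routine integration by parts confirms that the operator associated with $-a$ is $\Delta$ subject to the stated boundary conditions. The form is strictly positive, since $a(f,f) = \|f'\|_{L^2}^2 + |f(0)+f(1)|^2 = 0$ forces $f$ to be a constant with $2f(0) = 0$, hence $f = 0$. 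The compact embedding $H^1(0,1) \hookrightarrow L^2(0,1)$ then yields that $\Delta$ has compact resolvent and that $\spb(\Delta) < 0$; in particular every spectral value is an isolated eigenvalue of finite multiplicity.

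Next, I would verify Setting~\ref{sett:general} with $u = \varphi = \one$ and $m_1 = m_2 = 1$. The domination assumption follows from the Sobolev embedding $\dom{\Delta} \subseteq H^2(0,1) \hookrightarrow C([0,1]) \subseteq L^\infty(0,1) = (L^2(0,1))_{\one}$, and self-adjointness gives the same inclusion for $\Delta'$. For the spectral assumption, let $\lambda_0 := \spb(\Delta)$ and write $\lambda_0 = -\omega^2$. The ansatz $f(x) = A\cos(\omega x) + B\sin(\omega x)$ in the eigenvalue equation $\Delta f = \lambda_0 f$ reduces, under the non-local boundary conditions, to a $2 \times 2$ linear system whose smallest positive frequency $\omega$ and associated eigenfunction are computed explicitly in \cite[pp.~2625--2627]{DanersGlueckKennedy2016b}; it is verified there that the eigenspace is one-dimensional and is spanned by a strictly positive function $v$. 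By self-adjointness I may then take $\psi = v$, so the spectral assumption holds.

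With the setting verified, I would apply Proposition~\ref{prop:form-domain-estimates}: because $\dom{a} = H^1(0,1) \subseteq (L^2(0,1))_{\one}$, the proposition gives
\[
	-\one \otimes \one \preceq \Res(\mu,\Delta) \preceq \one \otimes \one
\]
for every real $\mu \in \resSet(\Delta)$. Choosing any $\mu_0 > \lambda_0$ in $\resSet(\Delta)$, the lower estimate is exactly the hypothesis of Theorem~\ref{thm:main}\ref{thm:main:item:max}, and its conclusion delivers the uniform maximum principle in a right neighbourhood of $\lambda_0$. Since $\lambda_0$ is isolated, some $\mu_0 < \lambda_0$ also lies in $\resSet(\Delta)$; the upper estimate there fulfils the hypothesis of Theorem~\ref{thm:main}\ref{thm:main:item:anti-max}, yielding the uniform anti-maximum principle on the left of $\lambda_0$.

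I expect the main obstacle to be the verification that the eigenfunction at $\lambda_0$ is simple and strictly positive: the resolvent of $\Delta$ is \emph{not} positive on $(\lambda_0, \infty)$, so standard Krein--Rutman or irreducibility arguments do not apply, and one must rely on the explicit calculation from the reference above. All other ingredients -- self-adjointness, strict positivity of the form, the Sobolev embedding, and the form-domain inclusion needed for Proposition~\ref{prop:form-domain-estimates} -- are routine.
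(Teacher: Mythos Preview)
Your proof is correct and follows essentially the same route as the paper: verify Setting~\ref{sett:general} with $u=\varphi=\one$ and $m_1=m_2=1$ (using self-adjointness and the Sobolev embedding for the domination assumption, and the explicit eigenfunction computation from \cite{DanersGlueckKennedy2016b} for the spectral assumption), invoke Proposition~\ref{prop:form-domain-estimates} via the form domain $H^1(0,1)$ to obtain the two-sided resolvent bound, and then apply Theorem~\ref{thm:main}. The only cosmetic differences are that you write out the form and the argument for $\spb(\Delta)<0$ explicitly where the paper cites \cite{DanersGlueckKennedy2016b}, and that you read off the two-sided estimate for \emph{all} real $\mu\in\resSet(\Delta)$ directly from Proposition~\ref{prop:form-domain-estimates} (which indeed states it in that generality), whereas the paper first records it for $\mu>0$ and then passes through Proposition~\ref{prop:extend-domination-exponent-one} --- a step that is in fact redundant.
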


\begin{proof}
	The spectral bound $\spb(\Delta)$ is strictly negative 
	according to \cite[Lemma~6.9]{DanersGlueckKennedy2016b}
	and it was shown in \cite[Theorem~4.2]{DanersGlueck2018b} that the domination assumption 
	in Setting~\ref{sett:general} is satisfied with $m_1=m_2=1$ and $u=\varphi=\one$. 
	Moreover, as a consequence of \cite[Theorem~6.11 and Proposition~3.1]{DanersGlueckKennedy2016b}, 
	the spectral assumption in Setting~\ref{sett:general} is also fulfilled. 
	
	The operator $\Delta$ is self-adjoint \cite[Lemma~6.9]{DanersGlueckKennedy2016b}, 
	and it is associated with a form whose form domain is $H^1(0,1)$
	(see \cite[pp.\,2625--2626]{DanersGlueckKennedy2016b}).
	Hence, Proposition~\ref{prop:form-domain-estimates} is applicable and yields 
	that $-\one \otimes \one \preceq \Res(\mu,A) \preceq \one \otimes \one$ for all real numbers $\mu>0$. 
	Proposition~\ref{prop:extend-domination-exponent-one} now implies 
	that actually the estimate holds for all real numbers $\mu$ in the resolvent set of $A$. 
	The result thus follows by Theorem~\ref{thm:main}.
\end{proof}

We note that the Laplace operator with the boundary conditions in
Proposition~\ref{prop:laplace-in-intervall-non-local-symmetric} 
is also discussed in some details in \cite[Section~3]{Akhlil2018}.
The uniform maximum principle proved above also follows 
from arguments in \cite{DanersGlueckKennedy2016b};
more precisely, the resolvent $\Res(0, \Delta)$ 
was explicitly computed in the proof of \cite[Theorem~6.11]{DanersGlueckKennedy2016b},
and from the formula for $\Res(0, \Delta)$ obtained there one can see that 
$\Res(0, \Delta) \succeq \one \otimes \one$.
By the series expansion of the resolvent about the point $0$,
one then obtains that $\Res(\mu,\Delta) \succeq \one \otimes \one$
for all $\mu \in (\spb(\Delta), 0]$.
However, the anti-maximum principle shown 
in Proposition~\ref{prop:laplace-in-intervall-non-local-symmetric} was,
to the best of our knowledge, not known before.

For our last example, we consider a situation where $\Delta$ is not self-adjoint. 
Therefore, unlike in the previous results, 
we do not have the luxury to employ Proposition~\ref{prop:form-domain-estimates}. 

\begin{proposition}
	\label{prop:laplace-in-intervall-non-local-thermostat}
	Let $(\alpha, \beta) = (0,\pi)$.
	Endow the Laplace operator $\Delta$ defined in~\eqref{eq:laplace-operator-on-interval-bc}
	with the boundary conditions~\eqref{eq:interval-bc-non-local},
	where we choose the matrix $B$ as
	\[
		B =
		\begin{bmatrix}
			0 & \beta \\
			0 & 0
		\end{bmatrix}
	\] 
	where $\beta \in (0, 1/\pi)$.
	Then $\spb(\Delta) < 0$,
	for all $\mu \in (\spb(\Delta),0]$ we have $\Res(\mu,\Delta) \succeq \one \otimes \one$,
	and for all $\mu$ in a left neighbourhood of $\spb(\Delta)$ 
	we have $\Res(\mu, \Delta) \preceq - \one \otimes \one$.
\end{proposition}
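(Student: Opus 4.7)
The plan is to reduce everything to a single explicit Green's function computation at $\mu = 0$ and then feed the resulting two-sided estimate into the machinery developed in Sections~\ref{section:bounds-of-resolvents} and~\ref{section:eventual-positivity}.

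First, I would verify Setting~\ref{sett:general} with $u = \varphi = \one$ and $m_1 = m_2 = 1$. The spectral features required -- namely, $\spb(\Delta) < 0$ being a geometrically simple eigenvalue of both $\Delta$ and $\Delta'$ with eigenfunctions bounded strictly away from $0$ and $\infty$ -- as well as the fact that $\Delta$ generates a positive $C_0$-semigroup on $L^2(0,\pi)$, can be extracted from the analysis of this operator in \cite[pp.\,2625--2627]{DanersGlueckKennedy2016b}. The domination inclusions $\dom{\Delta} \subseteq E_{\one}$ and $\dom{\Delta'} \subseteq (E')_{\one}$ are immediate consequences of the Sobolev embedding $H^2(0,\pi) \hookrightarrow L^\infty(0,\pi)$ applied to $\Delta$ and its adjoint (whose boundary conditions are governed by $B^\top$).

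Second, I would solve $-u'' = f$ subject to the boundary conditions $u'(0) = \beta u(\pi)$ and $u'(\pi) = 0$ and read off the explicit Green's function $G$ of $\Res(0,\Delta)$. A direct inspection of $G$ shows that it is uniformly bounded above on $[0,\pi]^2$, and crucially, that the hypothesis $\beta < 1/\pi$ is precisely what forces $G(x,z)$ to remain bounded away from zero uniformly on the closed square. This yields the two-sided estimate
\[
    \one \otimes \one \preceq \Res(0,\Delta) \preceq \one \otimes \one.
\]

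Third, I would combine these bounds with our abstract results. For the lower half of the statement, positivity of the semigroup together with the resolvent equation $\Res(\mu,\Delta) = \Res(0,\Delta) - \mu\,\Res(\mu,\Delta)\Res(0,\Delta)$ gives $\Res(\mu,\Delta) \ge \Res(0,\Delta)$ for all $\mu \in (\spb(\Delta), 0]$, and hence $\Res(\mu,\Delta) \succeq \one \otimes \one$ throughout that interval. For the anti-maximum principle, Corollary~\ref{cor:anti-max-characterization} applies with $\mu_1 = 0$: the hypotheses $\Res(0,\Delta) \ge 0$ (from the positive semigroup) and $\Res(0,\Delta) \preceq \one \otimes \one$ (from the explicit estimate) are both in place, so the corollary delivers $\Res(\mu,\Delta) \preceq -\one \otimes \one$ in a left neighbourhood of $\spb(\Delta)$.

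The main obstacle is the Green's function analysis in the second step: one needs to verify carefully that the threshold $\beta = 1/\pi$ is exactly the value at which $G$ is forced to touch zero at the corner of $[0,\pi]^2$, so that the strict inequality $\beta < 1/\pi$ delivers a genuine uniform positive lower bound. Once this is established, the remaining steps are a fairly routine application of the characterisation results already proved.
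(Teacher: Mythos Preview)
Your overall strategy---explicit computation at $\mu=0$, then feed the resulting two-sided estimate into the abstract machinery---matches the paper's proof closely. The paper also verifies the domination and spectral assumptions for $u=\varphi=\one$ and $m_1=m_2=1$, computes $\Res(0,\Delta)$ explicitly, reads off $\one\otimes\one \preceq \Res(0,\Delta) \preceq \one\otimes\one$, and then invokes the abstract results (the paper uses Proposition~\ref{prop:extend-domination-two-sided} followed by Theorem~\ref{thm:main}\ref{thm:main:item:anti-max} for the anti-maximum part, while you use Corollary~\ref{cor:anti-max-characterization}; both routes are fine).

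There is, however, a genuine error in your justification. You assert that $\Delta$ generates a \emph{positive} $C_0$-semigroup and extract both $\Res(0,\Delta)\ge 0$ and $\Res(\mu,\Delta)\ge 0$ for $\mu\in(\spb(\Delta),0]$ from this. That is false: the semigroup generated by this thermostat operator is only \emph{eventually} positive, not positive---this is precisely why the example appears in \cite{DanersGlueckKennedy2016b} and \cite{DanersGlueck2018b} in the first place. Positivity of the resolvent on the right of $\spb(\Delta)$ is therefore not available a priori, and the references you cite do not supply it.

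The fix is easy and is exactly what the paper does: the positivity $\Res(0,\Delta)\ge 0$ comes directly from the explicit Green's function (the kernel is $\tfrac{1}{\beta}$ for $y<x$ and $\tfrac{1}{\beta}+x-y$ for $y>x$, which is nonnegative precisely when $\beta<1/\pi$), not from any semigroup property. Once you have $\Res(0,\Delta)\ge 0$, the Neumann series expansion $\Res(\mu,\Delta)=\sum_{k\ge 0}(-\mu)^k\Res(0,\Delta)^{k+1}$ has nonnegative terms for $\mu\le 0$ and gives $\Res(\mu,\Delta)\ge\Res(0,\Delta)\succeq\one\otimes\one$ on $(\spb(\Delta),0]$; your resolvent-equation argument then goes through as well. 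So: delete the positive-semigroup claim, derive $\Res(0,\Delta)\ge 0$ from the explicit formula instead, and the rest of your proof stands.
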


The boundary conditions in this proposition were used in \cite{GuidottiMerino1997}
in a linearisation of a thermostat model.
In \cite[Theorem~5.7]{GuidottiMerino2000}, a maximum principle was shown 
for the operator $\Delta$ with the above boundary conditions.
Our proposition shows that a uniform anti-maximum principle holds as well.

We note that before Theorem~6.10 in \cite{DanersGlueckKennedy2016b} it was claimed
that $\spb(\Delta) > 0$; 
this claim is apparently based on a confusion of $\Delta$ and $-\Delta$
(but this does not affect the arguments in the proof of \cite[Theorem~6.10]{DanersGlueckKennedy2016b}).

\begin{proof}[Proof of Proposition~\ref{prop:laplace-in-intervall-non-local-thermostat}]
	The domination assumption in Setting~\ref{sett:general}
	is satisfied for $u = \varphi = \one$ and $m_1 = m_2 = 1$
	(note that taking the adjoint of $\Delta$ simply corresponds
	to swapping the boundary points of the interval $(0,\pi)$).
	
	Since $\beta < \frac{1}{\pi}$ we, in particular, have
	that $\beta < \frac{1}{2}$.
	Using the latter inequality, one can show by a direct computation 
	that the spectral bound $\spb(\Delta)$ is strictly negative,
	and that the spectral assumption from Setting~\ref{sett:general} is also satisfied;
	see for instance \cite[Theorem~11.7.4]{Glueck2016} 
	and \cite[Section~3]{GuidottiMerino1997} for details
	(we note that the operator $A$ that occurs in parts~(b) and~(c) 
	of \cite[Theorem~11.7.4]{Glueck2016} is the same as the operator $\Delta_B$ there;
	also, note that the boundary points of the interval are swapped there
	and that the parameter $\beta$ there is minus our parameter $\beta$).
	 
	An explicit computation shows that the resolvent of $\Delta$ at the point $0$ is given by
	\[
		R(0,\Delta) f(x) 
		= 
		\int_x^{\pi} \left(\frac{1}{\beta} + x - y\right) f(y) \dx y 
		+ \frac{1}{\beta} \int_0^x f(y) \dx y
	\]
	for all $f\in L^2(0,\pi)$ and $x\in (0,\pi)$.
	We conclude that $\one \otimes \one \preceq \Res(0,\Delta) \preceq \one \otimes \one$,
	where the first estimate holds since $\beta < 1/\pi$.
	
	The series expansion of $\Res(\mu,\Delta)$ about the point $0$
	now implies the estimate $\one \otimes \one \preceq \Res(\mu, \Delta)$
	for all $\mu \in (\spb(\Delta), 0]$.
	
	In addition, Proposition~\ref{prop:extend-domination-two-sided} shows that
	$-\one \otimes \one \preceq \Res(\mu, \Delta) \preceq \one \otimes \one$ for all real numbers $\mu \in \resSet(\Delta)$
	that are smaller than $\spb(\Delta)$.
	Part~\ref{thm:main:item:anti-max} of our main result, Theorem~\ref{thm:main},
	thus shows that even $\Res(\mu, \Delta) \preceq - \one \otimes \one$ 
	for all $\mu$ in a left neighbourhood of $\spb(\Delta)$.
\end{proof}

\subsection{An anti-maximum principle for a Laplace operator on a metric graph}
\label{subsection:application-metric-graph}

In this subsection, we briefly discuss an extension of the anti-maximum principle
for the Neumann-Laplacian and the Laplace operator 
with periodic boundary conditions on an interval from 
Proposition~\ref{prop:laplace-operator-on-interval}(b) and~(c):
we now consider a Laplace operator on a finite metric graph, subject to Kirchhoff boundary conditions.
For a general reference to elliptic operators on metric graphs 
-- in particular in the context of evolution equations --
we refer to the monograph \cite{Mugnolo2014}.

Let $G = (V, E)$ be a finite, connected, and (a priori) undirected graph.
Assign a length $\ell_e$ to each edge $e \in E$.
We identify each edge $e \in E$ with a copy of the interval $[0, \ell_e]$;
in order to do so, we actually need to endow $e$ with a direction --
which is, however, arbitrary and not of relevance for what follows.

By $H^1(G)$, we denote the set of all tuples of functions
\begin{align*}
	f = (f_e)_{e \in E} \in \bigoplus_{e \in E} H^1(0, \ell_e)
\end{align*}
which satisfy the additional condition that, for each vertex $v \in V$,
all functions $f_e$ on edges that begin or end in $v$ have the same value at $v$.
By $L^2(G)$, we simply mean the space $\bigoplus_{e \in E} L^2(0, \ell_e)$.
We define symmetric a bilinear form
\begin{align*}
	a: H^1(G) \times H^1(G) \to \bbC
\end{align*}
as
\begin{align*}
	a(f,g) = \sum_{e \in E} \int_0^{\ell_e} f_e'(x) \overline{g_e'(x)} \dx x
\end{align*}
for all $f,g \in H^1(G)$.
To the form $a$ we can associate a self-adjoint operator $-\Delta$ on $L^2(G)$;
its negative operator $\Delta$ has spectral bound $0$ and acts as the Laplace operator
on each edge of $G$;
its boundary conditions are, in addition to the continuity condition in the vertices
that has already appeared in the definition of $H^1(G)$ above,
so-called \emph{Kirchhoff conditions} in the vertices.

The operator $\Delta$ is a simultaneous generalization of the Neumann-Laplace operator 
and the Laplace operator with periodic boundary conditions on an interval:
if $G$ consists precisely of two vertices and one edge of length $\ell$ between them, 
then $\Delta$ is simply the Neumann-Laplace operator on $L^2(0,\ell)$.
If we connect two vertices by two edges, instead, then $\Delta$ becomes 
the Laplace operator with periodic boundary conditions on an interval.

The operator $\Delta$ generates a positive $C_0$-semigroup on $L^2(G)$
(see for instance \cite[Example~6.73]{Mugnolo2014})
and thus, the resolvent $\Res(\mu, \Delta)$ is a positive operator 
for every $\mu > \spb(\Delta) = 0$.
The following proposition shows 
that $\Delta$ also satisfies a uniform anti-maximum principle:

\begin{proposition}
	\label{prop:anti-max-laplace-on-graph}
	Consider the Laplace operator $\Delta$ that we introduced above on the graph $G$.
	For all $\mu$ in a left neighbourhood of $\spb(\Delta)$, we have $\Res(\mu,\Delta) \preceq -\one \otimes \one$.
	Here, $\one \in L^2(G)$ denotes the vector $(\one)_{e \in E}$.
\end{proposition}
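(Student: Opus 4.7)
The plan is to verify that $\Delta$ fits into Setting~\ref{sett:general} with $u = \varphi = \one$, $m_1 = m_2 = 1$ and $\lambda_0 = \spb(\Delta) = 0$, and then to invoke the form-domain estimate from Proposition~\ref{prop:form-domain-estimates} together with Theorem~\ref{thm:main}\ref{thm:main:item:anti-max}. This is entirely parallel to the argument used for the Neumann Laplacian in Proposition~\ref{prop:laplace-operator-on-interval}(b) and for the non-local boundary conditions in Proposition~\ref{prop:laplace-in-intervall-non-local-symmetric}.

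First I would check the two structural assumptions. For the domination assumption, Sobolev embedding gives $H^2(0, \ell_e) \hookrightarrow L^\infty(0, \ell_e)$ on every edge; combined with continuity at each vertex, this forces $\dom{\Delta} \subseteq L^\infty(G) = E_{\one}$, and self-adjointness of $\Delta$ yields the corresponding inclusion for the dual. For the spectral assumption, $-\Delta$ is the non-negative operator associated with the form $a$ from the excerpt, so $\spb(\Delta) = 0$; on a connected metric graph, any function in $\ker \Delta$ is affine on each edge, continuous at the vertices, and satisfies the Kirchhoff condition, and a standard argument then shows that it must be constant. Hence $\ker \Delta = \linSpan\{\one\}$ is one-dimensional, self-adjointness transfers the same statement to $\Delta'$, and the domination relations $v = \one \succeq u = \one$ and $\psi = \one \succeq \varphi = \one$ hold trivially.

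Next I would apply Proposition~\ref{prop:form-domain-estimates} with $u = \one$. The form domain of $-\Delta$ is $H^1(G)$, and a further one-dimensional Sobolev embedding gives $H^1(G) \subseteq L^\infty(G) = E_{\one}$. The proposition therefore produces the two-sided bound
\[
	-\one \otimes \one \preceq \Res(\mu, \Delta) \preceq \one \otimes \one
\]
for every real $\mu$ in the resolvent set of $\Delta$. By Proposition~\ref{prop:spectral-properties}(a) the spectral value $0$ is isolated, so there exists some real $\mu_0 < 0$ in $\resSet(\Delta)$; the upper bound above then ensures $\Res(\mu_0, \Delta) \preceq \one \otimes \one$, and Theorem~\ref{thm:main}\ref{thm:main:item:anti-max} immediately yields $\Res(\mu, \Delta) \preceq -\one \otimes \one$ for all $\mu$ in a left neighbourhood of $0$, as required.

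The argument is essentially a routine assembly of the abstract machinery. The only point that warrants a moment's care is matching the sign convention of Proposition~\ref{prop:form-domain-estimates}, which is phrased for an operator $A$ with $\spb(A) < \infty$, to the situation at hand: $\Delta$ is bounded above (so $\spb(\Delta) = 0 < \infty$) and self-adjoint, and the form domain accessed in the proof of that proposition via the square root of the positive-definite resolvent $\Res(\mu, \Delta)$ for $\mu > 0$ is precisely the form domain $H^1(G)$ of $-\Delta$; once this identification is in place, all estimates proceed verbatim as in the earlier interval examples.
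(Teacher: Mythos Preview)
Your proof is correct and follows essentially the same route as the paper: verify the domination and spectral assumptions of Setting~\ref{sett:general} for $u=\varphi=\one$, then use Proposition~\ref{prop:form-domain-estimates} (via the form-domain inclusion $H^1(G)\subseteq L^\infty(G)$) to obtain the required resolvent bound, and finally apply the main result.

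The only minor deviation is in the final step. The paper invokes Corollary~\ref{cor:anti-max-characterization}, for which it first notes that $\Delta$ generates a \emph{positive} $C_0$-semigroup (so that $\Res(\mu_1,\Delta)\ge 0$ for some $\mu_1>0$); it then only needs the upper bound $\Res(\mu_1,\Delta)\preceq \one\otimes\one$ at a single point on the right of $0$. You instead observe that Proposition~\ref{prop:form-domain-estimates} already delivers the two-sided estimate at \emph{every} real point of the resolvent set, in particular at some $\mu_0<0$, and then apply Theorem~\ref{thm:main}\ref{thm:main:item:anti-max} directly. Your path is marginally more self-contained in that it avoids citing the positivity of the semigroup, while the paper's path illustrates the use of the characterization result; both are equally valid and amount to the same underlying argument.
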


\begin{proof}
	Since $\dom{\Delta}$ is a subset of $H^1(G)$, 
	the domination assumption in Setting~\ref{sett:general} is satisfied
	for $u = \varphi = \one$ and $m_1 = m_2 = 1$.
	Moreover, since the graph is assumed to be connected,
	the spectral bound $0$ is a simple eigenvalue of $\Delta$ 
	and its eigenspace is spanned by $\one$;
	thus, the spectral assumption in Setting~\ref{sett:general} is also satisfied.
	Furthermore, since $\Delta$ is a generator of a positive $C_0$-semigroup, all assumptions of Corollary~\ref{cor:anti-max-characterization} are satisfied. Thus, we only need to show $\Res(\mu,A) \preceq \one\otimes \one$ for some $\mu>0$.
	
	Since the form domain $H^1(G)$ of $a$ is contained in the principal ideal
	generated by $\one$ in $L^2(G)$,
	we can employ Proposition~\ref{prop:form-domain-estimates},
	which indeed yields the estimate $\Res(\mu,\Delta) \preceq \one \otimes \one$
	for all $\mu > 0$.
\end{proof}

The situation considered in the above example can be generalized in various directions,
but we refrain from discussing this in detail here.

\subsection{Polyharmonic operators with Dirichlet boundary conditions revisited}
\label{subsection:applications:polyharmonic-ball}

Let $\Omega$ be a bounded domain in $\bbR^n\, (n\geq 2)$, say with $C^\infty$-boundary. 
For a positive integer $\ell\geq 2$, consider the polyharmonic operator $A$ on $E:=L^2(\Omega)$ given by
\[
	A: 
	\dom{A} := W^{2\ell,2}(\Omega) \cap W_0^{\ell,2}(\Omega) \to E, 
	\quad 
	f \mapsto Af:=-(-\Delta)^\ell f.
\]
We define $d: \Omega\to\bbC$ as $d(x)=\dist(x,\partial \Omega)$.

It was shown by Grunau and Sweers in \cite[Theorem~5.2]{GrunauSweers1997b} 
that for domains sufficiently close to the unit ball, 
the leading eigenvalue $\spb(A)$ is geometrically simple 
and the corresponding eigenspace is spanned by a vector $v \succeq d^2$. 
This result helps us establish 
that the operator $A$ satisfies the spectral assumptions in Setting~\ref{sett:general}.
For more general domains, this property cannot be expected, in general (see the recent article \cite{SchniedersSweers2020b}, though) 
We collect here some properties of the operator $A$:

\begin{proposition}
	\label{prop:polyharmonic-ball-properties}
	The operator $A$ is a closed, densely defined, and real operator on $E$. 
	Moreover, $A$ is self-adjoint with a strictly negative spectral bound,
	and the domination assumption from Setting~\ref{sett:general} is satisfied
	for a sufficiently large number $m_1 = m_2$
	and for $u = \varphi = d^\ell$.
	
	If, in addition, $\Omega$ is sufficiently close to the unit ball in $\bbR^n$ 
	in the sense of \cite[Theorem~5.2]{GrunauSweers1997b}, 
	then $A$ also satisfies the spectral assumption in Setting~\ref{sett:general}
	for $\lambda_0 = \spb(A)$. 
\end{proposition}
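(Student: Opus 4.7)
The plan is to verify the four assertions of the proposition in the order listed. Density of $\dom{A}$ is clear from $C_c^\infty(\Omega) \subseteq \dom{A}$; closedness follows from standard elliptic regularity, which makes the graph norm equivalent to the $W^{2\ell,2}$-norm on $\dom{A}$; and realness is immediate because $(-\Delta)^\ell$ has real coefficients and the domain is stable under complex conjugation. For self-adjointness I would identify $-A$ with the $\ell$-th power, in the sense of the spectral calculus, of the Dirichlet--Laplace operator $-\Delta_D$ on $L^2(\Omega)$; since $-\Delta_D$ is positive self-adjoint with compact resolvent, so is $(-\Delta_D)^\ell$, and the explicit domain $W^{2\ell,2}(\Omega) \cap W_0^{\ell,2}(\Omega)$ coincides with the abstract domain of this spectral power by iterated elliptic regularity together with the characterisation of $W_0^{\ell,2}(\Omega)$ as the closure of $C_c^\infty(\Omega)$ in $W^{\ell,2}(\Omega)$. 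Because every Dirichlet eigenvalue of $-\Delta$ on the bounded domain $\Omega$ is strictly positive and bounded away from $0$, we obtain $\spb(A) = -\lambda_1(-\Delta_D)^\ell < 0$.

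For the domination assumption with $u = \varphi = d^\ell$, I would pick an integer $m_1 = m_2$ large enough that the Sobolev embedding $W^{2\ell m_1,2}(\Omega) \hookrightarrow C^\ell(\overline{\Omega})$ holds; this is possible since $\Omega$ has smooth boundary (any $m_1 > \tfrac{1}{2} + \tfrac{n}{4\ell}$ works). Iterating elliptic regularity for the polyharmonic Dirichlet problem, every $f \in \dom{A^{m_1}}$ lies in $W^{2\ell m_1,2}(\Omega)$ and its iterates $A^j f$ remain in $W_0^{\ell,2}(\Omega)$; in particular $f \in W_0^{\ell,2}(\Omega)$, which forces the normal derivatives of $f$ up to order $\ell - 1$ to vanish on $\partial \Omega$. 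Combining this with $C^\ell$-regularity up to the boundary and Taylor expanding transversally at each boundary point yields a pointwise estimate $|f(x)| \le C\, d(x)^\ell$, with $C$ controlled by the Sobolev norm of $f$; this is precisely $\dom{A^{m_1}} \subseteq E_{d^\ell}$. Since $A$ is self-adjoint, $A' = A$ under the $L^2$-duality, and the dual domination $\dom{(A')^{m_2}} \subseteq (E')_{d^\ell}$ follows from the same estimate, the pointwise bound by $d^\ell$ translating directly into membership of the principal ideal in $E'$ generated by $d^\ell$.

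For the spectral assumption I would first note that, since $\dom{A} \subseteq W^{2\ell,2}(\Omega)$ embeds compactly into $L^2(\Omega)$, the resolvent of $A$ is compact, so every spectral value of $A$ is an eigenvalue of finite algebraic multiplicity; in particular, $\spb(A)$ is attained. Under the hypothesis that $\Omega$ is sufficiently close to the unit ball, the Grunau--Sweers result \cite[Theorem~5.2]{GrunauSweers1997b} furnishes geometric simplicity of this eigenvalue together with a positive eigenfunction $v$ admitting an estimate $v \succeq d^\ell$. Self-adjointness of $A$ then permits the choice $\psi := v$ as a dual eigenfunction, so that the domination $\psi \succeq d^\ell = \varphi$ is automatic.

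The main obstacle lies entirely in the boundary-decay estimate $v \succeq d^\ell$ for the leading eigenfunction: for higher-order operators, the first eigenfunction need not even be positive on a general domain, and extracting the precise rate $d^\ell$ at the boundary is exactly the content of the Grunau--Sweers perturbation argument around the unit ball. Everything else in the proposition reduces to standard Sobolev embeddings, iterated elliptic regularity, and the spectral calculus of the Dirichlet Laplacian.
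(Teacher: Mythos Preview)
Your verification of density, realness, the domination assumption, and the spectral assumption is fine and matches the paper's argument closely. There is, however, a genuine error in your treatment of self-adjointness.

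The operator $-A = (-\Delta)^\ell$ with domain $W^{2\ell,2}(\Omega)\cap W_0^{\ell,2}(\Omega)$ is \emph{not} the $\ell$-th spectral power of the Dirichlet Laplacian $-\Delta_D$ when $\ell\ge 2$. The domain of $(-\Delta_D)^\ell$ consists of those $f\in W^{2\ell,2}(\Omega)$ for which $f,\Delta f,\ldots,\Delta^{\ell-1}f$ all lie in $H_0^1(\Omega)$; these are the so-called \emph{Navier} boundary conditions. By contrast, membership in $W_0^{\ell,2}(\Omega)$ means that the normal derivatives $\partial_\nu^k f$ vanish on $\partial\Omega$ for $k=0,\ldots,\ell-1$; these are the Dirichlet (or clamped) boundary conditions. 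Already for the biharmonic case $\ell=2$ on a ball, the function $f(x)=(1-|x|^2)^2$ belongs to $W_0^{2,2}$ but has $\Delta f$ nonzero on the boundary, so the two domains differ. Consequently your identification fails, and with it the formula $\spb(A)=-\lambda_1(-\Delta_D)^\ell$.

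Self-adjointness of the Dirichlet polyharmonic operator has to be obtained differently, for instance via the symmetric form $a(f,g)=\int_\Omega D^\ell f\cdot\overline{D^\ell g}$ on $H_0^\ell(\Omega)$ together with elliptic regularity identifying the form operator with $A$; the paper simply invokes \cite[Theorem~8.4]{LionsMagenes1972}. Strict negativity of $\spb(A)$ then follows from coercivity of this form (Poincar\'e-type inequalities on $H_0^\ell$), not from the Dirichlet-Laplace spectrum.
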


\begin{proof}
	The operator $A$ is densely defined and real, and it's spectral bound satisfies $\spb(A)<0$. 
	Moreover, it follows by \cite[Corollary~2.21]{GazzolaGrunauSweers2010} that $0\in \rho(A)$. 
	Thus $A$ is closed and its self-adjointness follows with the aid of \cite[Theorem~8.4]{LionsMagenes1972}. 
	Hence all of its spectral values are real.
	
	In order to verify the domination assumption, note that for sufficiently large integers $m_1$ and $j$, 
	we have 
	\[
		\dom{A^{m_1}} \subseteq W^{2\ell m_1,2}(\Omega)\cap W_0^{\ell,2}(\Omega) \subseteq C^j\left(\overline{\Omega}\right) \cap W_0^{\ell,2}(\Omega) \subseteq E_u;
	\]
	where the first inclusion follows from \cite[Corollary~2.21]{GazzolaGrunauSweers2010},
	the second inclusion from a Sobolev embedding theorem, 
	and the third inclusion from the boundary decay rate of $u$.
	Combining the above facts with the self-adjointness of $A$, 
	it follows that the domination assumption in Setting~\ref{sett:general} is satisfied
	for sufficiently large values of $m_1$ and for $m_2 = m_1$.
	
	If $\Omega$ is sufficiently close to the unit ball then, 
	as stated above, we know from \cite[Theorem~5.2]{GrunauSweers1997b} 
	that the eigenspace $\ker(\spb(A)-A)$ is spanned by a vector $v\succeq u$,
	so the spectral assumption from Setting~\ref{sett:general} is satisfied as well. 
\end{proof}

For $\ell=2$, the operator $A$ generates a $C_0$-semigroup on $E$ 
and its eventual positivity properties were studied 
first in \cite[Section~6]{DanersGlueckKennedy2016b} and \cite[Sections~8.3 and 11.4]{Glueck2016} 
and then again in \cite[Section~4]{DanersGlueck2018b}. 
In fact, in the former references \emph{individual} eventual properties of the resolvent were also studied. 
In particular, an individual maximum principle was proved 
in \cite[Proposition~6.5]{DanersGlueckKennedy2016b} for $n<4$. 

In the following, we are interested in uniform maximum and anti-maximum principles
for this operator.
We recall two known results about this topic, 
show how these results fit into the general theory developed in this paper, and generalize the first of them.
Let us start with the following uniform maximum principle that was recently
proved by Schnieders and Sweers in \cite[Corollary~4]{SchniedersSweers2020} for the particular case $\ell=2$.

\begin{theorem}
	\label{thm:schnieders-sweers}
	Assume that $\lambda_0 \in \bbR$ is a geometrically simple eigenvalue of $A$
	with an associated eigenvector $v$ that satisfies $v \succeq u := d^2$
	(where $d(x) = \dist(x,\partial \Omega)$ as above).
	Then $\Res(\mu,A) \succeq u \otimes u$ for all $\mu$ in a right neighbourhood of $\lambda_0$.
\end{theorem}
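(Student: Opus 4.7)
The plan is to deduce the statement from Theorem~\ref{thm:main}\ref{thm:main:item:max}. To this end I must verify, for the choice $u = \varphi = d^2$: (i)~the domination and spectral assumptions of Setting~\ref{sett:general}; and (ii)~the existence of some $\mu_0 > \lambda_0$ in $\resSet(A)$ at which $\Res(\mu_0, A) \succeq -d^2 \otimes d^2$.

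For step (i), I would invoke Proposition~\ref{prop:polyharmonic-ball-properties}, which yields the domination assumption for $u = \varphi = d^\ell$ with suitable $m_1 = m_2$. Since $d$ is bounded and $\ell \geq 2$, one has $d^\ell \preceq d^2$, so the inclusions $\dom{A^{m_1}} \subseteq E_{d^\ell} \subseteq E_{d^2}$ and $\dom{(A')^{m_1}} \subseteq (E')_{d^\ell} \subseteq (E')_{d^2}$ transfer the domination assumption to $u = \varphi = d^2$ with the same exponents. The spectral assumption is then precisely the hypothesis $v \succeq d^2$ together with the self-adjointness of $A$ from Proposition~\ref{prop:polyharmonic-ball-properties}, which permits me to take $\psi := v$ as an eigenvector of $A'$; clearly $\psi \succeq d^2 = \varphi$.

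For step (ii), the natural choice is $\mu_0 = 0$. Proposition~\ref{prop:polyharmonic-ball-properties} ensures $0 \in \resSet(A)$ and $\spb(A) < 0$, so $0 > \spb(A) \geq \lambda_0$, and $\mu_0 = 0$ lies in the appropriate half-line. The required lower bound $\Res(0, A) \succeq -d^2 \otimes d^2$ is precisely the statement that the integral kernel $G$ of $-A^{-1}$ satisfies $G(x,y) \geq -c \, d(x)^2 d(y)^2$ almost everywhere on $\Omega \times \Omega$ for some $c > 0$. For domains sufficiently close to the unit ball, such two-sided pointwise Green's function bounds for the polyharmonic operator with Dirichlet boundary conditions are classical and go back to the work of Grunau and Sweers; see in particular \cite{GrunauSweers1997b}. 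With (i) and (ii) in hand, Theorem~\ref{thm:main}\ref{thm:main:item:max} delivers $\Res(\mu, A) \succeq d^2 \otimes d^2$ on a right neighbourhood of $\lambda_0$, as desired.

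The hard step is (ii): the lower Green's function bound cannot be extracted from the abstract framework alone. In particular, Proposition~\ref{prop:form-domain-estimates} is too weak here, because the form domain $H_0^\ell(\Omega)$ is not contained in $E_{d^2}$ in general --- already in one dimension with $\ell = 2$, the sharp boundary decay rate of $H_0^2$ functions is $d^{3/2}$, not $d^2$. Thus the single genuine PDE input is a concrete kernel estimate at the one point $\mu_0 = 0$; once this is available, the remaining work --- promoting the one-sided estimate from a single point to a full right neighbourhood of $\lambda_0$, and simultaneously upgrading the bound from $\succeq -u \otimes u$ to $\succeq u \otimes u$ --- is entirely handled by the operator-theoretic machinery encapsulated in Theorem~\ref{thm:main}.
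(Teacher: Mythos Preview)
Your approach is essentially the same as the paper's: verify Setting~\ref{sett:general} via Proposition~\ref{prop:polyharmonic-ball-properties} and self-adjointness, then apply Theorem~\ref{thm:main}\ref{thm:main:item:max} with $\mu_0 = 0$ using a concrete Green's function lower bound. One correction is needed in step~(ii): the lower bound $\Res(0,A) \succeq -d^\ell \otimes d^\ell$ (hence $\succeq -d^2 \otimes d^2$) holds on \emph{arbitrary} smooth bounded domains, and the relevant references are \cite[Theorem~1.5]{DallAcquaMeisterSweers2005}, \cite[Theorem~1]{GrunauRobert2010}, or \cite[Theorem~4.1]{Pulst2015} --- not \cite{GrunauSweers1997b}, which concerns the eigenvector rather than the Green's function; this matters because the theorem is stated without any closeness-to-ball hypothesis on $\Omega$.
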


As mentioned in Proposition~\ref{prop:polyharmonic-ball-properties},
the spectral assumption in the theorem is satisfied for $\lambda_0 = \spb(A)$
if $\Omega$ is sufficiently close to the unit ball.

The proof of Theorem~\ref{thm:schnieders-sweers} in the special case $\ell = 2$ 
given in \cite[Corollary~4]{SchniedersSweers2020} 
relies on an intriguing lower estimate for the Green's function of $A$ which is deduced,
via quite involved technical arguments, from the lower bound
$\Res(0,A) \succeq -u\otimes u$ that was already known in the literature 
(see \cite[Theorem~1.5]{DallAcquaMeisterSweers2005} and \cite[Theorem~1]{GrunauRobert2010}, or alternatively \cite[Theorem~4.1]{Pulst2015}).
Let us now demonstrate how Theorem~\ref{thm:schnieders-sweers} can be obtained 
as a direct consequence of this lower bound and our abstract main result, 
Theorem~\ref{thm:main}.

\begin{proof}[Proof of Theorem~\ref{thm:schnieders-sweers}]
	The domination assumption in Setting~\ref{sett:general} 
	is satisfied according to Proposition~\ref{prop:polyharmonic-ball-properties}.
	Moreover, the spectral assumption in Setting~\ref{sett:general}
	is satisfied due to the assumptions of the theorem.
	In addition, the estimate 
	\[
		\Res(0,A) \succeq -u\otimes u;
	\]
	follows from \cite[Theorem~4.1]{Pulst2015}.
	Taking $\mu_0=0$ in Theorem~\ref{thm:main}\ref{thm:main:item:max} 
	proves the assertion (since $\lambda_0 \le \spb(A) < 0)$.
\end{proof}

Very recently, Grunau showed a similar estimate as in \cite[Theorem~4.1]{Pulst2015} for a class of polyharmonic operators where the leading coefficients are non-constant \cite[Theorem~1]{Grunau2021}.

The second result that we discuss here is the following characterization 
of the uniform anti-maximum principle for polyharmonic operators on balls
which was announced in \cite{ClementSweers2001} and proved in \cite[Theorem~1]{GrunauSweers2001}
(though in a slightly different form than stated below).

\begin{theorem}
	\label{thm:polyharmonic-ball-anti-max}
	Fix an integer $k \ge 1$ and, as above, let the dimension $n$ be at least $2$.
	Assume, in addition, that the domain $\Omega \subseteq \bbR^n$ is a ball.  
	Then the following are equivalent for the operator $B:=-(-A)^k$:
	\begin{enumerate}[ref=(\roman*)]
		\item\label{item:polyharmonic-ball-anti-max-strong} 
		For all $\mu$ in a left neighbourhood of $\spb(B)$ we have $\Res(\mu,B) \preceq -u \otimes u$,
		where $u = d^\ell$.
		
		\item\label{item:polyharmonic-ball-negative-one-point} 
		There exists a number $\mu_0 < \spb(B)$ in the resolvent set of $B$ 
		at which we have $\Res(\mu_0,B) \leq 0$.
		
		\item\label{item:polyharmonic-ball-necessary} 
		The dimension $n$ satisfies $n<2\ell(k-1)$.
	\end{enumerate}
\end{theorem}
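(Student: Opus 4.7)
The plan is to reduce the three-way equivalence to a single kernel estimate at the regular point $\mu_1 = 0$, which we can then analyze using the known explicit form of the polyharmonic Green's function on the ball. The entire argument rides on Theorem~\ref{thm:anti-max-characterization} applied to $B$.

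First I would verify that the operator $B = -(-A)^k$ falls within Setting~\ref{sett:general} with $u = \varphi = d^\ell$. The function $u = d^\ell$ is a quasi-interior point of $L^2(\Omega)_+$, and because the domain $\dom{B^{m_1}} = \dom{A^{km_1}}$ inherits the Dirichlet boundary condition of order $\ell$ from $\dom{A}$ (not the higher order $\ell k$), a Sobolev embedding combined with the $d^\ell$-decay at $\partial\Omega$ yields $\dom{B^{m_1}} \subseteq E_u$ for sufficiently large $m_1$; by self-adjointness, the same inclusion holds for $(B')^{m_2} = B^{m_2}$, so the domination assumption is satisfied. The spectral assumption transfers from $A$ to $B$ by the spectral mapping $\spec(B) = \{-\mu^k : -\mu \in \spec(A)\}$: the leading eigenspace of $B$ at $\spb(B) = -|\spb(A)|^k$ coincides with the leading eigenspace of $A$ at $\spb(A)$, which on a ball is spanned by a positive radial eigenfunction $v \succeq d^\ell$ (cf.\ Proposition~\ref{prop:polyharmonic-ball-properties}); the dual eigenspace has the same description since $B$ is self-adjoint.

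Next I would set $\mu_1 = 0$, which lies in $\resSet(B)$ since $\spb(B) < 0$. The identity $-B = (-A)^k$ gives
\[
\Res(0, B) = \bigl((-A)^k\bigr)^{-1} = \bigl((-A)^{-1}\bigr)^k = \Res(0, A)^k.
\]
Boggio's formula shows $\Res(0, A) \ge 0$ on the ball, so $\Res(0, B) \ge 0$ and therefore a fortiori $\Res(0,B) \succeq -u \otimes u$. Theorem~\ref{thm:anti-max-characterization} now applies with $\mu_1 = 0$ and yields immediately that \ref{item:polyharmonic-ball-anti-max-strong}, \ref{item:polyharmonic-ball-negative-one-point}, and the auxiliary estimate
\[
\Res(0, B) \preceq d^\ell \otimes d^\ell \tag{$\ast$}
\]
are mutually equivalent. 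What remains is to match ($\ast$) with the dimension condition \ref{item:polyharmonic-ball-necessary}.

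The final and essentially only analytic step is to establish ($\ast$) $\Leftrightarrow n < 2\ell(k-1)$. The operator $\Res(0, B) = \Res(0, A)^k$ is the $k$-fold convolution of the polyharmonic Green's function $G_A$, whose two-sided boundary behaviour on a ball is known in the sharp form $G_A(x,y) \asymp d(x)^\ell d(y)^\ell \min\!\bigl(1, d(x)^\ell d(y)^\ell / |x-y|^{n}\bigr)$ (Grunau--Sweers; a weaker form is used already in the proof of Theorem~\ref{thm:schnieders-sweers}). For the sufficiency I would estimate the iterated Green's function $G_A^{(k)}(x,y)$ by integrating $k-1$ copies of the above asymptotic against each other; pulling out the outermost factors $d(x)^\ell d(y)^\ell$, the remaining $(k-1)$-fold spatial integral over $\Omega^{k-1}$ is uniformly bounded precisely when the singularity contribution satisfies $n < 2\ell(k-1)$, giving $(\ast)$. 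For the necessity, when $n \ge 2\ell(k-1)$ the on-diagonal behaviour of $G_A^{(k)}(x,x)$ for interior $x$ grows like $|x-y|^{2\ell k - n}$ (or picks up a logarithm in the borderline case) in a way incompatible with the boundedness of $G_A^{(k)}(x,y)/(d(x)^\ell d(y)^\ell)$ for $x,y$ in the interior; this obstructs ($\ast$).

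The main obstacle is precisely this last computational step: obtaining sharp two-sided control of the iterated Boggio kernel and reading off the critical exponent $2\ell(k-1)$. This kernel estimate is essentially the analytic content of \cite[Theorem~1]{GrunauSweers2001} and can either be imported from there or redone via the estimates in \cite{GrunauSweers1997b, GrunauRobert2010}; everything else is soft operator-theoretic manipulation that is handled uniformly by Theorem~\ref{thm:anti-max-characterization}.
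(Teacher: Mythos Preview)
Your proposal is correct and follows essentially the same route as the paper: verify Setting~\ref{sett:general} for $B$, note $\Res(0,B)=\Res(0,A)^k\ge 0$ via Boggio, apply the characterization from Section~\ref{section:eventual-positivity}, and reduce everything to the kernel estimate $\Res(0,B)\preceq d^\ell\otimes d^\ell \Leftrightarrow n<2\ell(k-1)$. The only difference is bibliographic: the paper does not redo the iterated Green's function computation you sketch but simply imports the equivalence $(\ast)\Leftrightarrow n<2\ell(k-1)$ directly from \cite[Theorem~1.2]{GrunauSweers2002} (not \cite{GrunauSweers2001}), and it invokes Corollary~\ref{cor:anti-max-characterization} rather than Theorem~\ref{thm:anti-max-characterization}, which is immaterial since $\Res(0,B)\ge 0$.
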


Strictly speaking, the equivalences in the above theorem 
are slightly stronger than the one in \cite[Theorem~1]{GrunauSweers2001},
since the uniform anti-maximum principle in \cite[Theorem~1(b)]{GrunauSweers2001}
is formally weaker than assertion~\ref{item:polyharmonic-ball-anti-max-strong}
in the theorem above,
but formally stronger than assertion~\ref{item:polyharmonic-ball-negative-one-point}.

More interesting, though, is the question how this characterization 
is proved in \cite[Theorem~1]{GrunauSweers2001}:
it is known from \cite[Theorem~1.2]{GrunauSweers2002} that
$\Res(0, B) \preceq u\otimes u$ if and only if $n<2\ell(k-1)$;
this is used to prove \cite[Theorem~1]{GrunauSweers2001} via various resolvent estimates.
Since we have encapsulated all the technical resolvent estimates into our abstract results,
we are now able to re-derive Theorem~\ref{thm:polyharmonic-ball-anti-max}
from \cite[Theorem~1.2]{GrunauSweers2002} via a very short argument:

\begin{proof}[Proof of Theorem~\ref{thm:polyharmonic-ball-anti-max}]
	Since $\Omega$ is a ball, $\Res(0,A)$ is positive;
	this was shown by Boggio in \cite{Boggio1905}.
	This means that $\Res(0, B)$ must also be positive.
	Moreover, according to Proposition~\ref{prop:polyharmonic-ball-properties},
	the operator $A$ satisfies the domination and spectral assumption 
	from Setting~\ref{sett:general}
	for $u = \varphi = d^\ell$ and for sufficiently large $m_1 = m_2$.
	Consequently, so does $B$.
	
	Finally, we note that $\Res(0, B) \preceq u\otimes u$ if and only if $n<2\ell(k-1)$ 
	by \cite[Theorem~1.2]{GrunauSweers2002}. 
	So the claimed equivalence is a consequence of Corollary~\ref{cor:anti-max-characterization}.
\end{proof}

We remark that Corollary~\ref{cor:anti-max-characterization} was obtained 
as a consequence of Theorem~\ref{thm:anti-max-characterization} which 
(as mentioned before) adapted the resolvent arguments 
of Grunau and Sweers in \cite[Theorem~1]{GrunauSweers2001} to a more abstract setting. 
Let us also stress that we could only apply Corollary~\ref{cor:anti-max-characterization} 
due to the explicit Green's function estimate 
which was given by Grunau and Sweers in \cite[Theorem~1.2]{GrunauSweers2002}.

\subsection{Differential operators of odd order}
\label{subsection:applications:odd-order}
	
After the various applications to elliptic type differential operators in the preceding subsections,
we will now consider a class of differential equations that are 
equations of odd order on an interval, with periodic boundary conditions.
Since the order is odd, the maximum and anti-maximum principle might come as quite a surprise.
Throughout this subsection, let $\ell \ge 0$ be an integer. 
We consider the operator
\begin{align*}
	A: 
	L^2(0,1) \supseteq \dom{A} & \to     L^2(0,1),  \\
	                         w & \mapsto w^{(2\ell+1)}
\end{align*}
with domain 
\begin{align*}
	\dom{A} := 
	\big\{
		w \in H^{2\ell+1}(0,1) 
		: \,
		w^{(k)}(0) = w^{(k)}(1) 
		\text{ for all } k=0,\dots,2\ell
	\big\}.
\end{align*}
The operator $A$ is skew-adjoint and 
thus generates a unitary $C_0$-group $(e^{tA})_{t \in \bbR}$ on $L^2(0,1)$.
In the simplest case $\ell = 0$, this group is simply the periodic shift group;
but we will see in Proposition~\ref{prop:odd-order-not-ev-pos} that the (semi)group does not satisfy any eventual positivity property
if $\ell \ge 1$.
More importantly though, we will show that -- for each choice of $\ell$ --
the operator $A$ satisfies a uniform maximum and a uniform anti-maximum principle 
at its only real spectral value $\lambda_0 = 0$.

Let us start by noting that our domination and spectral assumption are satisfied:

\begin{proposition}
	\label{prop:domination-and-spectral-condition-for-odd-order-operator}
	Let $u := \one \in L^2(0,1)$ denote the constant function which takes the value $1$.
	Moreover, under the canonical anti-linear identification of $L^2(0,1)$ with its dual space,
	we also set $\varphi := \one$.
	
	Then the domination and spectral assumptions in Setting~\ref{sett:general} are satisfied by $A$
	 for $m_1 = m_2 = 1$ and $\lambda_0 = 0$ respectively.
\end{proposition}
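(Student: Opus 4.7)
The plan is to verify the two assumptions separately, using the fact that $A$ is skew-adjoint on $L^2(0,1)$: this can be seen by a standard integration-by-parts calculation in which all boundary terms pair up and cancel because of the periodic conditions imposed on both $w \in \dom{A}$ and the candidate $v$ (so that $\dom{A^*} = \dom{A}$ and $A^* = -A$). Under the canonical anti-linear identification of $L^2(0,1)$ with its dual, we then have $A' = -A$, and $(E')_\varphi$ is identified with $L^\infty(0,1)$ just as $E_u = L^\infty(0,1)$ (see Examples~\ref{ex:principal-ideal-and-norm-induced-by-functional}(a)). With these identifications in place, both parts reduce to statements about $A$ alone.

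For the domination assumption, I would simply invoke the one-dimensional Sobolev embedding $H^1(0,1) \hookrightarrow C([0,1]) \subseteq L^\infty(0,1)$. Since $\dom{A} \subseteq H^{2\ell+1}(0,1) \subseteq H^1(0,1)$, this gives $\dom{A} \subseteq L^\infty(0,1) = E_u$, proving the first inclusion with $m_1 = 1$. Skew-adjointness yields $\dom{A'} = \dom{A}$, and combined with the identification of the dual, the second inclusion $\dom{A'} \subseteq (E')_\varphi$ also holds, with $m_2 = 1$.

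For the spectral assumption, I would compute $\ker A$ directly. Any $w \in \dom{A}$ with $Aw = 0$ satisfies $w^{(2\ell+1)} = 0$, so $w$ is a polynomial of degree at most $2\ell$. Imposing the periodic conditions $w^{(k)}(0) = w^{(k)}(1)$ for $k = 2\ell-1, 2\ell-2, \ldots, 1, 0$ in descending order kills successively the coefficients of $x^{2\ell}, x^{2\ell-1}, \ldots, x$, so that $w$ must be a constant. Hence $\ker A = \linSpan\{\one\}$ is one-dimensional, and by skew-adjointness $\ker A' = \ker(-A) = \linSpan\{\one\}$ as well. Taking $v = \psi = \one$, the required relations $v \succeq u$ and $\psi \succeq \varphi$ hold trivially with equality, and $\lambda_0 = 0$ is both an eigenvalue of $A$ and of $A'$ in the required geometrically simple fashion.

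There is no serious obstacle; the only point requiring care is the bookkeeping around the dual space, namely verifying that the anti-linear identification is a lattice isomorphism onto $L^\infty(0,1)$ so that checking $\dom{(A')^{m_2}} \subseteq (E')_\varphi$ genuinely reduces to the Sobolev embedding for $\dom{A}$. Once this identification is made precise, both parts of the proof collapse to skew-adjointness plus the embedding $H^1(0,1) \hookrightarrow L^\infty(0,1)$ together with the elementary polynomial kernel computation.
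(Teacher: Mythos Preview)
Your proof is correct and follows essentially the same approach as the paper: both use skew-adjointness to reduce the dual conditions to those for $A$, the Sobolev embedding $H^1(0,1)\hookrightarrow L^\infty(0,1)$ for the domination assumption, and the elementary observation that a polynomial of degree at most $2\ell$ satisfying the periodic boundary conditions must be constant for the spectral assumption. Your write-up is slightly more detailed on the bookkeeping (the descending induction on the polynomial coefficients and the dual-space identification), but the argument is the same.
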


\begin{proof}
	The principal ideals $E_u$ and $(E')_\varphi$ that occur in the domination assumption
	are both equal to $L^\infty(0,1)$, due to our choice of $u$ and $\varphi$.
	
	Both $A$ and its adjoint $-A$ are defined on the space $H^{2\ell+1}(0,1)$;
	this is a subspace of $H^1(0,1)$ and the latter embeds into $L^\infty(0,1)$, 
	so the domination assumption is satisfied for $m_1 = m_2 = 1$, as claimed.
	
	Since $A$ is skew-adjoint, it suffices to prove the spectral condition for $A$ and $\one$ only.
	Obviously, $\one \in \ker A$. 
	On the other hand, every vector in $\ker A$ is a polynomial of degree at most $2\ell$,
	and the boundary conditions encoded in the domain of $A$ then readily imply 
	that this polynomial is actually constant. 
	Hence, $\ker A$ is indeed spanned by $\one$.
\end{proof}

Now we prove a uniform maximum and anti-maximum principle for the odd-order operator $A$.

\begin{theorem}
	\label{thm:odd-order-at-least-3-max-and-anti-max}
	Let $A$ be the differential operator of order $2\ell+1$ with periodic boundary conditions
	on $L^2(0,1)$ introduced above.
	\begin{enumerate}[\upshape (a)]
		\item 
		For all real numbers $\mu$ in a right neighbourhood of $0$,
		we have
		\[
			\Res(\mu,A) \succeq \one \otimes \one.
		\]
		
		\item 
		For all real numbers $\mu$ in a left neighbourhood of $0$,
		we have
		\[
			\Res(\mu,A) \preceq - \one \otimes \one.
		\]
	\end{enumerate}
	If $\ell = 0$ (i.e., if the operator $A$ is of first-order), 
	assertion~(a) even holds for all $\mu \in (0,\infty)$ and~(b) even holds for all $\mu \in (-\infty,0)$.	
\end{theorem}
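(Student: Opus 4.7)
The strategy is to apply Theorem~\ref{thm:main} at $\lambda_0 = 0$. Its domination and spectral assumptions are already in hand via Proposition~\ref{prop:domination-and-spectral-condition-for-odd-order-operator}. Skew-adjointness of $A$, together with Fourier diagonalisation in the basis $e_n(x) = e^{2\pi i n x}$, yields $\spec(A) = \{(2\pi i n)^{2\ell+1} : n \in \bbZ\}$, so the only real spectral value is $0$ and $\bbR \setminus \{0\} \subseteq \resSet(A)$.

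The central step is to verify, at some real $\mu \ne 0$, the two-sided resolvent estimate $-\one \otimes \one \preceq \Res(\mu, A) \preceq \one \otimes \one$, which simultaneously supplies the hypotheses of both parts of Theorem~\ref{thm:main}. By Proposition~\ref{prop:char-of-rank-1-domination}, this is equivalent to $\Res(\mu, A)$ extending to a bounded operator from $L^1(0,1)$ to $L^\infty(0,1)$; I would obtain this by exhibiting a bounded integral kernel. For $\ell \ge 1$, the kernel admits the Fourier representation $G_\mu(x, y) = \sum_{n \in \bbZ} (\mu - \lambda_n)^{-1} e^{2\pi i n(x-y)}$, which converges absolutely and uniformly on $[0,1]^2$ since $|\mu - \lambda_n|^{-1} = O(|n|^{-(2\ell+1)})$ with $2\ell + 1 \ge 3$; the resulting kernel is continuous, hence bounded. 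For $\ell = 0$ the decay is too slow for this Fourier argument, and instead I would solve the first-order ODE $\mu w - w' = f$ with periodic boundary condition directly to obtain an explicit exponential kernel that is manifestly bounded on $(0,1)^2$. Theorem~\ref{thm:main} then yields parts~(a) and~(b) in a neighbourhood of $0$ for every $\ell \ge 0$.

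The final strengthening for $\ell = 0$, that the estimates hold for all $\mu > 0$ and all $\mu < 0$ respectively, does not come from the abstract machinery but exploits the positivity of the first-order generator. Here $A$ generates the positive periodic shift $C_0$-group $T_t f(x) = f((x+t) \bmod 1)$; for $\mu > 0$, the Laplace representation $\Res(\mu, A) f = \int_0^\infty e^{-\mu t} T_t f\, dt$, split into integer-length intervals in $t$, gives the pointwise estimate $(\Res(\mu, A) f)(x) \ge (e^\mu - 1)^{-1} \duality{\one}{f}$ for $f \ge 0$, i.e.\ $\Res(\mu, A) \succeq \one \otimes \one$. The case $\mu < 0$ reduces to this via $\Res(\mu, A) = -\Res(-\mu, -A)$, where $-A$ generates the reverse shift group, also positive. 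The main obstacle is producing the bounded kernel in the $\ell = 0$ case, where the explicit ODE solution is unavoidable; fortunately the same explicit computation also yields the pointwise semigroup estimate needed for the all-$\mu$ sharpening.
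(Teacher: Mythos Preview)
Your argument is correct, but the paper handles the case $\ell \ge 1$ by a different device. Instead of summing the Fourier series of the Green kernel, the paper factorises the operator algebraically: writing $1 - A = (1 - B)C$ with $B = d/dx$ (periodic, so self-dual domain) and $C = \sum_{k=0}^{2\ell} D^k$ (periodic, domain $\subseteq H^{2\ell} \subseteq L^\infty$ when $\ell \ge 1$), one gets $\Res(1,A) = \Res(0,-C)\,\Res(1,B)$, and then Proposition~\ref{prop:domination-imlies-estimate-for-product} gives the two-sided estimate directly from the range conditions on the two factors. Your Fourier--kernel route is more elementary and quantitative (it produces an explicit $L^\infty$ bound on the Green function), while the paper's factorisation avoids any kernel computation and would transfer more easily to settings without a convenient spectral basis. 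Both land on the same two-sided estimate and then invoke Theorem~\ref{thm:main}.

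For $\ell = 0$, the paper simply writes down the explicit resolvent formula and states that the assertions of the theorem can be read off from it. Your treatment of the all-$\mu$ sharpening via the Laplace representation of the resolvent and the periodicity of the positive shift group is a cleaner substitute for this verification and makes the lower bound $\Res(\mu,A) \ge (e^\mu - 1)^{-1}\,\one\otimes\one$ transparent.
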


\begin{proof}
	Recall from the previous proposition that $A$ satisfies
	both the domination and the spectral assumption in Setting~\ref{sett:general}
	for $u = \varphi = \one$.
	We use different arguments for the cases $\ell \ge 1$ and $\ell = 0$.
	
	\emph{First case: $\ell \ge 1$:}
	It suffices to prove the resolvent estimate
	\begin{align}
		\label{eq:thm:odd-order-at-least-3-max-and-anti-max-1}
		- \one \otimes \one \preceq \Res(\mu,A) \preceq \one \otimes \one
		\qquad \text{for all } \mu \in \bbR \setminus \{0\},
	\end{align}
	since both claims~(a) and~(b) then follow immediately from our main result,
	Theorem~\ref{thm:main}.
	In order to verify~\eqref{eq:thm:odd-order-at-least-3-max-and-anti-max-1}, 
	let us first show the estimate for $\mu = 1$.
	To this end, consider the closed operators 
	\begin{align*}
		B: w \mapsto w'
		\qquad \text{and} \qquad 
		C: w \mapsto \sum_{k=0}^{2\ell} w^{(k)}
	\end{align*}
	on $L^2(0,1)$ with domains
	\begin{align*}
		\dom{B} := 
		\big\{
			w \in H^1(0,1): \, w(0) = w(1)
		\big\}
	\end{align*}
	and
	\begin{align*}
		\dom{C} := 
		\big\{
			w \in H^{2\ell}(0,1) 
			: \,
			w^{(k)}(0) = w^{(k)}(1) 
			\text{ for all } k=0,\dots,2\ell-1
		\big\},
	\end{align*}
	respectively.
	We have $1-A = (1-B)C$ and we know that $1$ is in the resolvent set of $A$ and $B$.
	Thus, $0$ is in the resolvent set of $C$ with
	$\Res(1,A) = \Res(0,-C) \Res(1,B)$.
	The dual operator of $B$ has the same domain as $B$, 
	so the dual resolvent $\Res(0,B)'$ maps into $L^\infty(0,1) = L^2(0,1)_{\one}$;
	the resolvent $\Res(0,-C)$ maps into the same space as well, since since $\ell \ge 1$.
	
	Therefore, we obtain from Proposition~\ref{prop:domination-imlies-estimate-for-product} that, 
	indeed the required estimate
	$- \one \otimes \one \preceq \Res(1,A) \preceq \one \otimes \one$ holds.
	Now, Proposition~\ref{prop:extend-domination-two-sided} even shows that 
	the same estimate remains true for all real numbers $\mu$ in the resolvent set of $A$,
	i.e., we have proved~\eqref{eq:thm:odd-order-at-least-3-max-and-anti-max-1}
	and as a result the theorem for the case $\ell \ge 1$.
	
	\emph{Second case: $\ell = 0$.}
	In this case, we cannot use the previous argument since the operator $C$ from above
	would be the identity operator and thus, its resolvent does not map into $L^\infty(0,1)$.
	However, the case $\ell = 0$ is simple enough that one can just compute the resolvent explicitly.
	Using the periodic boundary conditions, one obtains the formula
	\begin{align*}
		\Res(\mu,A)f(x) 
		= 
		e^{\mu x}
		\left(
			\frac{e^\mu}{e^\mu-1} 
			\int_0^1 e^{-\mu y}f(y) \dx y
			-
			\int_0^x e^{-\mu y} f(y) \dx y
		\right)
	\end{align*}
	for all complex numbers $\mu$ in the resolvent set of $A$ 
	and all $f \in L^2(0,1)$.
	For non-zero real numbers $\mu$, this formula can be used to directly verify 
	the assertions of the theorem.
\end{proof}

As mentioned above, if $\ell = 0$, then the unitary group $(e^{tA})_{t \in \bbR}$
is just the periodic shift semigroup and hence, it is positive.
Let us show now that the $C_0$-semigroups $(e^{tA})_{t \in [0,\infty)}$
and $(e^{-tA})_{t \in [0,\infty)}$ are not \emph{individually eventually positive} for the case $\ell \ge 1$,
in the sense specified below:

\begin{proposition}
	\label{prop:odd-order-not-ev-pos}
	Let $\ell \ge 1$.
	Then there exists a function $0 \le f_1 \in L^2(0,1)$ such
	that the orbit $t \mapsto e^{tA}f_1$ is not eventually contained
	in the positive cone $L^2(0,1)_+$. 
	Likewise, there exists a function $0 \le f_2 \in L^2(0,1)$ such 
	that the orbit $t \mapsto e^{-tA}f_2$ is not eventually contained
	in $L^2(0,1)_+$.  
\end{proposition}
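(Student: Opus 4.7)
The plan is to exhibit an explicit non-negative trigonometric polynomial $f_1$ whose forward orbit under $(e^{tA})_{t \geq 0}$ takes strictly negative values at an unbounded sequence of times. Note first that $A$ is skew-adjoint with compact resolvent (since $\dom{A} \subseteq H^{2\ell+1}(0,1)$ embeds compactly into $L^2(0,1)$), and a direct computation using the boundary conditions yields a complete orthonormal basis of eigenfunctions $e_k(x) := e^{2\pi i k x}$, $k \in \bbZ$, with purely imaginary eigenvalues $\lambda_k = (2\pi i k)^{2\ell+1} = i\beta k^{2\ell+1}$, where $\beta := (-1)^\ell (2\pi)^{2\ell+1} \neq 0$. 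Consequently, any $f = \sum_k c_k e_k \in L^2(0,1)$ evolves as $e^{tA}f = \sum_k c_k e^{it\beta k^{2\ell+1}} e_k$. The structural reason a counterexample should exist for $\ell \geq 1$ is that the $k$-th Fourier mode now rotates at frequency $\beta k^{2\ell+1}$, which is super-linear in $k$ and thus destroys the shift-equivariance present in the case $\ell = 0$.

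I then take the explicit candidate $f_1(x) := (1 + \cos(2\pi x))^2 = \tfrac{3}{2} + 2\cos(2\pi x) + \tfrac{1}{2}\cos(4\pi x) \geq 0$, which involves only the modes $k \in \{-2,-1,0,1,2\}$. The orbit is therefore a real trigonometric polynomial given pointwise by
\[
	e^{tA}f_1(x) = \tfrac{3}{2} + 2\cos(t\beta + 2\pi x) + \tfrac{1}{2}\cos(2^{2\ell+1}t\beta + 4\pi x).
\]
For each $n \in \bbN$ I pick $t_n > 0$ with $t_n \to \infty$ and $t_n \beta \equiv \pi / 2^{2\ell+1} \pmod{2\pi}$; this is possible since $\beta \neq 0$. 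Then automatically $2^{2\ell+1} t_n \beta \equiv \pi \pmod{2\pi}$, and evaluating at $x = 1/2$ gives
\[
	e^{t_n A}f_1(1/2)
	= \tfrac{3}{2} - 2\cos(\pi/2^{2\ell+1}) - \tfrac{1}{2}
	= 1 - 2\cos(\pi/2^{2\ell+1}).
\]
For $\ell \geq 1$ we have $\pi/2^{2\ell+1} \leq \pi/8 < \pi/3$, so $\cos(\pi/2^{2\ell+1}) > 1/2$ and the expression is strictly negative.

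By continuity of the trigonometric polynomial $e^{t_n A}f_1$, this function is strictly negative on a whole neighbourhood of $1/2$, so $e^{t_n A}f_1 \notin L^2(0,1)_+$; since $t_n \to \infty$, the orbit of $f_1$ is not eventually contained in the positive cone. For the reverse semigroup $(e^{-tA})_{t \geq 0}$ the same argument applies verbatim with $f_2 := f_1$, because replacing $A$ by $-A$ merely replaces $\beta$ by $-\beta$, and we can equally well choose $t_n > 0$ with $-t_n\beta \equiv \pi/2^{2\ell+1} \pmod{2\pi}$. There is no single hard step: the only non-obvious ingredient is the design of $f_1$ so that exactly two non-trivial Fourier modes can be simultaneously tuned, via the ratio $2^{2\ell+1}$ between their rotation frequencies, to produce destructive interference at the evaluation point $x = 1/2$.
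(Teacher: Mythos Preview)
Your argument is correct and complete. The eigenfunction computation, the formula for the orbit, the choice of times $t_n$, and the evaluation at $x = 1/2$ all check out; in particular $\cos(\pi/2^{2\ell+1}) > 1/2$ for $\ell \ge 1$ gives the strict negativity you need, and continuity of the trigonometric polynomial pushes this to a set of positive measure so that $e^{t_n A}f_1 \notin L^2(0,1)_+$.

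Your route, however, is quite different from the paper's. The paper does not construct an explicit counterexample at all. Instead it computes the point spectrum $\pntSpec(A) = \{\,2\pi i k^{2\ell+1} : k \in \bbZ\,\}$ and then invokes an abstract structure theorem (from \cite{AroraGlueck2021}) asserting that for an individually eventually positive semigroup the intersection of the point spectrum with the imaginary axis must be \emph{cyclic} (closed under integer multiples). For $\ell \ge 1$ the set $\{k^{2\ell+1} : k \in \bbZ\}$ is visibly not closed under integer multiples, which yields the contradiction; the argument for $-A$ is identical. Your approach is more elementary and fully self-contained --- it avoids the black-box cyclicity theorem and produces an explicit witness $f_1 = (1+\cos 2\pi x)^2$ --- whereas the paper's approach is shorter to write down and conceptually explains the failure via the super-linear gaps in the spectrum, a mechanism that transfers immediately to other situations where such spectral information is available.
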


\begin{proof}
	By identifying $L^2(0,1)$ with the $L^2$-spaces over the complex unit circle
	and using the Fourier transform, we can see that the point spectra of $A$ and $-A$ are given by
	\begin{align*}
		\pntSpec(A) = \pntSpec(-A) = 
		\big\{
			2\pi i k^{2\ell + 1}
			: \,
			k \in \bbZ
		\big\}.
	\end{align*}
	Now assume that, for each $0 \le f_1 \in L^2(0,1)$, the orbit $t \mapsto e^{tA}f_1$
	was eventually contained in the positive cone.
	This property of the semigroup $(e^{tA})_{t \in [0,\infty)}$
	is called \emph{individual eventual positivity}, and for such semigroups
	it was shown in \cite[Theorem~2.1]{AroraGlueck2021}
	that intersection of the point spectrum $\pntSpec(A)$ with the imaginary axis is \emph{cyclic}, 
	which means that every integer multiple of a purely imaginary eigenvalue of $A$ is again an eigenvalue.
	But since $\ell \ge 1$, the set $\pntSpec(A) \cap i\bbR = \pntSpec(A)$ 
	-- which we computed above -- is obviously not cyclic.
	
	The same argument also applies to the semigroup generated by $-A$.
\end{proof}

The cyclicity result from \cite[Theorem~2.1]{AroraGlueck2021} even holds 
for a slightly larger class of semigroups, as shown in \cite[Theorem~6.3.2]{Glueck2016}.
If one uses the latter more general result, one can also generalize the previous proposition:
there exists a function $0 \le f_1 \in L^2(0,1)$ such that the orbit
$t \mapsto e^{tA}f_1$ does not converge to the positive cone as $t \to \infty$
(and likewise for $f_2$ and the semigroup in negative times).

\subsection{A delay differential operator}
\label{subsection:application-delay-operator}

In this last subsection, we consider a delay equation taken from \cite[Theorem~4.6]{DanersGlueck2018b},
where \emph{uniform} eventual positivity of the solution semigroup was asserted;
here we study the eventual positivity of the corresponding stationary problem, i.e., 
of the resolvent of the differential operator.

Fix a real number $c \in (0,\infty)$, 
and consider the time evolution of a function $y:[-2,\infty)\to\bbC$ given by
\begin{equation}
	\label{eq:delay}
	\dot{y}(t) 
	= 
	c\left( \int_{t-2}^{t-1} y(s)\, ds -\int_{t-1}^t y(s) \dx s + y(t-2)-y(t)\right) 
	\quad 
	\text{ for } t \geq 0.
\end{equation}
Let $E = \bbC \times L^1\left([-2,0]; \bbC\right)$ and define a 
closed, densely defined linear operator $A:\dom{A}\subseteq E\to E$ by
\begin{align*}
	\dom{A} & := \left\{ (x,f) \in \bbC\times W^{1,1}\left( (-2,0);\bbC\right) : f(0)=x \right\},
	\\
	A(x,f)  & := \left( \duality{\Phi}{f} , f' \right);
\end{align*}
where $\Phi$ is a functional on $W^{1,1}\left( (-2,0);\bbC\right)$ given by
\[
	\duality{\Phi}{f} 
	= 
	c\left( \int_{-2}^{-1} f(s)\, ds -\int_{-1}^0 f(s) \dx s + f(-2)-f(0) \right)
\]
for all $f\in W^{1,1}\left( (-2,0);\bbC\right)$. 
Then $A$ generates a $C_0$-semigroup by \cite[Theorem~3.23]{BatkaiPiazzera2005} 
and the solutions of the delay differential equation \eqref{eq:delay} 
on the \emph{state space} $E$ are given by this semigroup 
(see \cite[Section~3.1]{BatkaiPiazzera2005}). 
We point out that \cite{BatkaiPiazzera2005} only considers the interval $[-1,0]$ 
but the results still hold if we stretch the interval. 
This semigroup was also considered in \cite[Theorem~4.6]{DanersGlueck2018b}, 
where uniform eventual positivity properties were proved for $c\leq \pi/16$. 
Here, we instead treat the resolvent. 
In fact, we show that both the uniform maximum and anti-maximum principles 
are satisfied without any restriction on $c$.

Before stating the result, we point out that the dual space $E'$ can be identified 
with the space $\bbC \times L^{\infty}\left([-2,0]; \bbC\right)$.
We also note that $0$ is an eigenvalue of $A$ with eigenvector $(1,\one_{[-2,0]})$.

\begin{theorem}
	Let $A$ be the linear operator given above on $E=\bbC \times L^1\left([-2,0]; \bbC\right)$. 
	Let $u:= (1,\one_{[-2,0]}) \in E$ and $\varphi =(1,c\one) \in E'$.
	\begin{enumerate}
		\item 
		For all $\mu$ in a right neighbourhood of $0$, we have
		\[
			\Res(\mu,A) \succeq u\otimes \varphi.
		\]
		\item 
		For all $\mu$ in a left neighbourhood of $0$, we have
		\[
			\Res(\mu,A) \preceq -u\otimes \varphi.
		\]
	\end{enumerate}
\end{theorem}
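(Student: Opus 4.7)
The plan is to verify the hypotheses of Setting~\ref{sett:general} at $\lambda_0 = 0$ with exponents $m_1 = 1$ and $m_2 = 0$, and then to derive both one-sided resolvent bounds from the free two-sided estimate delivered by Corollary~\ref{cor:two-sided-estimate-of-resolent-power-by-domination-assumption}, so that Theorem~\ref{thm:main} applies on each side of $0$. For the domination assumption, the principal ideal of $u = (1, \one_{[-2,0]})$ in $E$ is $E_u = \bbC \times L^{\infty}(-2,0)$, and the Sobolev embedding $W^{1,1}(-2,0) \hookrightarrow L^{\infty}(-2,0)$ gives $\dom{A} \subseteq E_u$, so $m_1 = 1$ works. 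On the dual side, the components of $\varphi = (1, c\one)$ are bounded below by a positive constant, hence $(E')_\varphi$ coincides with all of $E' = \bbC \times L^{\infty}(-2,0)$, and the inclusion $\dom{(A')^0} = E' = (E')_\varphi$ holds trivially with $m_2 = 0$.

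For the spectral assumption, I first check that $\duality{\Phi}{\one_{[-2,0]}} = c(1 - 1 + 1 - 1) = 0$, so that $Av = 0$ for $v := u = (1, \one_{[-2,0]})$, and I note that $A(x,f) = 0$ forces $f' \equiv 0$ and $f(0) = x$, which gives $\ker A = \linSpan\{v\}$ and trivially $v \succeq u$. To produce a dual eigenvector, I will integrate the pairing $\duality{A(x,f)}{(y,g)}$ by parts and expand $\duality{\Phi}{f}$; after collecting coefficients of $x = f(0)$, $f(-2)$, and the two pieces of $f$ on $(-2,-1)$ and $(-1,0)$, this yields the requirements $g(0) = g(-2) = yc$, $g' = yc$ on $(-2,-1)$, and $g' = -yc$ on $(-1,0)$ for $(y, g) \in \ker A'$. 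Taking $y = 1$ produces the continuous tent function $g(s) = c(s+3)$ on $[-2,-1]$ and $g(s) = c(1-s)$ on $[-1,0]$, which satisfies $c \le g \le 2c$, so $\psi := (1, g) \in \ker A'$ and $\psi \ge \varphi$, giving $\psi \succeq \varphi$.

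With Setting~\ref{sett:general} secured, Corollary~\ref{cor:two-sided-estimate-of-resolent-power-by-domination-assumption} applied with $m_2 = 0$ yields $-u \otimes \varphi \preceq \Res(\mu,A) \preceq u \otimes \varphi$ for every real $\mu \in \resSet(A)$. Since $A$ generates a $C_0$-semigroup, arbitrarily large positive $\mu$ belong to $\resSet(A)$, and Proposition~\ref{prop:spectral-properties}(a) guarantees that $0$ is an isolated spectral value, so a left neighbourhood of $0$ also meets $\resSet(A)$. Feeding the lower estimate at some $\mu_0 > 0$ into Theorem~\ref{thm:main}\ref{thm:main:item:max} delivers part~(i), and feeding the upper estimate at some $\mu_0 < 0$ into Theorem~\ref{thm:main}\ref{thm:main:item:anti-max} delivers part~(ii). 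I expect the main obstacle to be the correct identification of $g$: the boundary contributions from integration by parts must line up with those produced by $\Phi$ to yield a continuous $g$ bounded below by a positive multiple of $\one$, and any sign error in $\Phi$ or in the integration by parts would destroy this lower bound and with it the estimate $\psi \succeq \varphi$.
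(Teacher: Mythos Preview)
Your proof is correct and follows essentially the same route as the paper: verify the domination assumption with $m_1=1$, $m_2=0$ via the Sobolev embedding and the observation $(E')_\varphi = E'$, then verify the spectral assumption at $\lambda_0=0$ using the tent function $g$ (which is exactly the dual eigenvector $\psi=(1,cv)$ the paper quotes from \cite{DanersGlueck2018b}). The only difference is the concluding step: since $m=m_1+m_2=1$, the paper invokes Theorem~\ref{thm:ev-pos-powers} directly, whereas you obtain the two-sided bound $-u\otimes\varphi \preceq \Res(\mu,A) \preceq u\otimes\varphi$ from Corollary~\ref{cor:two-sided-estimate-of-resolent-power-by-domination-assumption} and feed it into Theorem~\ref{thm:main}; both are immediate once $m=1$, so this is a cosmetic distinction rather than a genuinely different argument.
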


\begin{proof}
	We begin by noting that the operator $A$ is real. 
	Next, by the Sobolev embedding theorem, 
	$\dom{A} \subseteq \bbC \times L^{\infty}\left([-2,0]; \bbC\right) = E_u$. 
	Moreover, $\varphi$ is a strictly positive functional and $(E')_{\varphi} = E'$. 
	Therefore the domination assumption in Setting~\ref{sett:general} is satisfied for $m_1=1$ and $m_2=0$.
	
	As mentioned above, a simple computation shows that $0$ is an eigenvalue of $A$ and the eigenspace is spanned by the vector $u$. 
	In particular, $0$ is a geometrically simple eigenvalue of both $A$ and $A'$. 
	Define $v(s) =3+s$ if $s\in [-2,-1]$ and $v(s) =1-s$ for $s\in (-1,0]$. 
	It was shown in the proof of \cite[Theorem~4.6]{DanersGlueck2018b} that $\psi:=(1,cv)\in \ker A'$. 
	Of course, $\psi \succeq \varphi$. 
	This settles the spectral assumption in Setting~\ref{sett:general} as well. 
	
	Finally, as $m=m_1+m_2=1$, an appeal to Theorem~\ref{thm:ev-pos-powers} concludes the proof.
\end{proof}

\subsection*{Acknowledgements} 

We are grateful to Delio Mugnolo for his suggestion 
to study eventual positivity properties of the third-order equation 
in Subsection~\ref{subsection:applications:odd-order} (for $\ell = 1$).
The authors are also grateful to the referee for their helpful comments and, in particular, for pointing out the references \cite{Grunau2021} and \cite{Pulst2015}.

The first author would like to express their thanks 
for the hospitality during a very pleasant stay at Universit\"at Passau, where most of the work was done. 
The first named author was supported by 
Deutscher Aka\-de\-mi\-scher Aus\-tausch\-dienst (Forschungs\-stipendium-Promotion in Deutschland).

\bibliographystyle{plain}
\bibliography{literature}

\end{document}